\newcommand{\defeq}{:=}
\newcommand{\norm}[1]{\| #1\|}
\newcommand{\abs}[1]{|#1|}
\newcommand{\one}{\mathbbm 1}
\renewcommand{\leq}{\leqslant}
\renewcommand{\geq}{\geqslant}
\renewcommand{\phi}{\varphi}
\newcommand{\qand}{\quad \text{and} \quad}
\newcommand{\mysum}[3]{\sum_{#1 = #2}^{#3}}
\newcommand{\subdiff}{\partial}
\newcommand{\reg}{\mathcal J}
\newcommand{\dJ}{\subdiff \reg}
\newcommand{\Jminsol}{u^\dagger}
\newcommand{\unJ}{u^{n,\delta}_\reg}
\let\span\relax
\DeclareMathOperator{\span}{Span}
\let\ker\relax
\newcommand{\ker}[1]{\mathcal N(#1)}
\newcommand{\range}[1]{\mathcal R(#1)}
\newcommand{\cl}[1]{{\overline{#1}}}
\newcommand{\orth}[1]{#1^\perp}
\DeclareMathOperator{\BV}{BV}
\DeclareMathOperator{\TV}{TV}
\newcommand{\R}{\mathbb R}
\newcommand{\N}{\mathbb N}
\newcommand{\weakto}{\mathrel{\rightharpoonup}}
\newtheorem{theorem}{Theorem}
\newtheorem{proposition}[theorem]{Proposition}
\theoremstyle{definition}
\newtheorem{definition}[theorem]{Definition}
\newtheorem{assumption}{Assumption}
\newtheorem*{assumption*}{Assumption}
\newtheorem{remark}[theorem]{Remark}
\newtheorem*{remark*}{Remark}
\newtheorem*{definition*}{Definition}
\newtheorem{example}[theorem]{Example}
\newcommand{\orthdata}{\underline}
\newcommand{\orthim}{\overline} 
\newcommand{\U}{\mathcal U}
\newcommand{\Y}{\mathcal Y}
\newcommand{\Radon}{\mathcal R}
\newcommand{\ind}[3]{{#1=#2,\dots,#3}}
\newcommand{\ui}{u^i}
\newcommand{\yi}{y^i}
\newcommand{\udn}{u^\U_n}
\newcommand{\udndelta}{u^\U_{n,\delta}}
\newcommand{\vdndelta}{u^\Y_{n,\delta}}
\newcommand{\udndeltan}{u^\U_{n(\delta),\delta}}
\newcommand{\vdn}{u^\Y_n}
\newcommand{\trim}{\{u^i\}_\ind{i}{1}{n}}
\newcommand{\trdata}{\{y^i\}_\ind{i}{1}{n}}
\newcommand{\trpairs}{\{u^i, y^i\}_\ind{i}{1}{n}}
\newcommand{\trimhat}{\{\orthim u^i\}_\ind{i}{1}{n}}
\newcommand{\trdatahat}{\{\orthim y^i\}_\ind{i}{1}{n}}
\newcommand{\trpairshat}{\{\orthim u^i, \orthim y^i\}_\ind{i}{1}{n}}
\newcommand{\trimb}{\{\orthdata u^i\}_\ind{i}{1}{n}}
\newcommand{\trdatab}{\{\orthdata y^i\}_\ind{i}{1}{n}}
\newcommand{\trpairsb}{\{\orthdata u^i, \orthdata y^i\}_\ind{i}{1}{n}}
\newcommand{\triminf}{\{u^i\}_{i \in \N}}
\newcommand{\trdatainf}{\{y^i\}_{i \in \N}}
\newcommand{\trimhatinf}{\{\orthim u^i\}_{i \in \N}}
\newcommand{\trdatahatinf}{\{\orthim y^i\}_{i \in \N}}
\newcommand{\trimbinf}{\{\orthdata u^i\}_{i \in \N}}
\newcommand{\trdatabinf}{\{\orthdata y^i\}_{i \in \N}}
\newcommand{\ds}{``Faces'' }
\providecommand{\keywords}[1]{\textbf{Keywords: } #1}
\renewcommand*{\@fnsymbol}[1]{\@arabic{#1}}
\patchcmd\maketitle{\setcounter{footnote}{0}}{}{}{}
\patchcmd\maketitle{%
  \renewcommand\thefootnote{\@fnsymbol\c@footnote}}{\AdaptNote\thanks\multthanks}{}{}
\patchcmd\maketitle{%
  \def\@makefnmark{\rlap{\@textsuperscript{\normalfont\@thefnmark}}}}{}{}{}
\numberwithin{equation}{section}
\pgfplotsset{compat=newest}
\pgfplotsset{plot coordinates/math parser=false}
\newcommand{\bb}[1]{{#1}}
\newcommand{\rb}[1]{{#1}}
\title{Data driven regularization by projection}
\date{}
\author{Andrea Aspri\footnotemark[1]
\and Yury Korolev\footnotemark[2]\footnotemark[4]
\and Otmar Scherzer\footnotemark[3]\footnotemark[1]}
\begin{document}
\maketitle

\renewcommand{\thefootnote}{\fnsymbol{footnote}}
\footnotetext[1]{Johann Radon Institute for Computational and Applied Mathematics, Altenberger Stra{\ss}e 69, 4040 Linz, Austria, \mbox{andrea.aspri@ricam.oeaw.ac.at}}
\footnotetext[2]{Department of Applied Mathematics and Theoretical Physics, University of Cambridge, Wilberforce Road, Cambridge CB3 0WA, UK, \mbox{y.korolev@damtp.cam.ac.uk}}
\footnotetext[3]{Department of Mathematics, University of Vienna, Oskar-Morgenstern-Platz 1, 1090 Vienna, Austria, \mbox{otmar.scherzer@univie.ac.at}}
\footnotetext[4]{Communicating author}

\abstract{We study linear inverse problems under the premise that the forward operator is not at hand but given indirectly 
	through some input-output training pairs. We demonstrate that \emph{regularisation by projection} and 
	\emph{variational regularisation} can be formulated by using the training data only and without making use of the forward operator. 
	We study convergence and stability of the regularised solutions in view of 
	\fullcite{Seidman:1980}, 
	who showed that regularisation by projection is \emph{not convergent} in general, by giving some insight 
	on the generality of Seidman's nonconvergence example.
	Moreover, we show, analytically and numerically, that regularisation by projection is indeed capable of \emph{learning} linear operators, such as the Radon transform.
}

\keywords{Data driven regularisation, variational regularisation, regularisation by projection, inverse problems, Gram-Schmidt orthogonalisation}

\section{Introduction}
\emph{Linear inverse problems} are concerned with the reconstruction of a quantity $u \in \U$ from 
indirect measurements $y \in \Y$ which are related by the linear \emph{forward operator} $A \colon \U \to \Y$.
$A$ models the physics of data acquisition and may involve, for instance, integral transforms (such as the Radon 
transform, see for instance \cite{Natterer_Wubbelling,Sch15}) and partial differential equations 
(PDEs) (see for instance~\cite{isakov_IP_PDEs}). 

Until recently the methods for solving inverse problems were \emph{model driven}, meaning that the physics and 
chemistry of the measurement acquisition process was represented mathematically by the forward operator $A$ 
(for some relevant applications see \cite{Natterer_Wubbelling,Ellerbroek_2009,Symes_2009}). 
Nowadays, with the rise of the area of big data, methods that combine forward modelling with data driven techniques are being developed~\cite{arridge_et_al_acta_numerica}. 
Some of these techniques build upon the similarity between deep neural networks and classical approaches to inverse problems such as iterative regularisation~\cite{Adler_2017,AspSch19_report} and proximal methods~\cite{Pock_variational:2017}. Some are based on postprocessing of the reconstructions obtained by a simple inversion technique such as filtered backprojection~\cite{Unser_denoising:2017}. Others use data driven regularisers in the context of variational regularisation~\cite{Haltmeier_NETT, Lunz_CBS_adversarial} or use deep learning to learn a component of the solution in the null space of the forward operator~\cite{Haltmaier_deep_nullspace:2019,Bubba_Kutyniok_invisible:2019}.

On the other hand, methods that discard forward modeling, that means knowledge of the operator $A$, have emerged. 
They are appealing because they do not require knowledge on the physics and chemistry of the data acquisition process, 
bypass the costly forward model evaluation and often yield results of superior visual quality. 
However, it has been demonstrated that for ill-posed inverse problems na\"ive applications of such methods can be unstable with 
respect to small perturbations in the measurement data~\cite{hansen_ai_at_a_cost, maass_trivial_IP:2019}. 
Moreover, there is currently no theory for purely data driven regularisation in inverse problems, i.e. a theory in the setting 
when the forward operator is given only via training pairs 
\begin{equation}\label{eq:pairs}
\{\ui, \yi \} \quad \text{such that} \quad A\ui= \yi.
\end{equation}
In this paper we make a first step of an analysis for purely data driven regularisation. 
Our goal is to develop and analyse stable algorithms for solving $Au=y$ \emph{without} making use of the 
explicit knowledge of the forward operator $A$ but having only
\begin{itemize}
	\item approximate measurement $y^\delta$ such that $\norm{y-y^\delta} \leq \delta$ and 
	\item training pairs~\eqref{eq:pairs}.
\end{itemize}
In particular, we demonstrate that regularisation by projection~\cite{Seidman:1980,engl:1996} and  variational regularisation~\cite{scherzer_var_meth:2009}  can be formulated in a data driven setting. We provide new analysis of regularisation by projection and derive sufficient conditions for convergence that shed some new light on the classical nonconvergence example in~\cite{Seidman:1980}.  
Finally, in the numerical part of the paper we demonstrate that the methods we propose yield similar results to classical methods and require moderate amounts of training data.
Note that we consider the idealised case when the training pairs \eqref{eq:pairs} are noise-free. We leave out for future research the natural extension to the case of noisy training data. 

There is a whole body of literature concerning regularisation with various types of projections, typically onto subspaces spanned by some predefined bases such as polynomials or wavelets; a complete list of references would be too long to be presented here. Our approach differs from these works in the fact that our subspaces are defined by training data. This presents challenges since there is typically no relation between these subspaces and the properties of the forward operator (e.g., its eigenspaces) or the properties of the exact solution such as smoothness. It is hard, therefore, to make claims about optimality of the methods.

A connection between regularisation by projection and machine learning has been observed by several authors, for instance, in the context of statistical learning~\cite{rudi_camoriano_rosasco:2015, kriukova_pereverzyev_tkachenko:2017}, 
where it was demonstrated that \emph{subsampling} plays the role of regularisation by (random) projections. 
From the statistical point of view, the idea to approximate an operator from input-output samples is closely related to statistical numerical
approximation (see, e.g.,~\cite{Owhadi:2019} for a recent review) and in particular to optimal recovery~\cite{Micchelli_Rivlin:1977}. 

\paragraph{Structure of the paper.} In Section~\ref{sec:setting} we summarise our assumptions and describe our setting and notation in more detail. Regularisation by projection onto the space of input data is the topic of Section~\ref{sec:reg_proj}. We derive sufficient conditions on the input data and the outputs data, under which projections result in a regularisation. In Section~\ref{sec:dual_lsq} we present another regularisation method based on projections onto the space of output data, the \emph{dual least squares} method~\cite{engl:1996}, which is known to be convergent. We demonstrate, however, that it cannot be realised using the training pairs~\eqref{eq:pairs} and requires a different type of training data. In Section~\ref{sec:var_reg} we study variational regularisation and in Section~\ref{sec:numerics} we present numerical experiments with all three methods. We show that regularisation by projection is indeed capable of \emph{learning} linear operators, such as the Radon transform. 

\paragraph{Main contributions.} \rb{Our main goal in this article is to carry over some classical results in regularisation theory to the model-free setting, with an overarching theme of using projections on subspaces defined by training data in the framework of regularisation by projection. This perspective requires new, data driven regularity conditions, for which we show a relation to source conditions in special cases, whilst in general such relationship remains an open question. A classical nonconvergence result of Seidman~\cite{Seidman:1980} demonstrates the sharpness of this regularity condition.
}

\rb{We also demonstrate that the amount of training data (which is also related the complexity of the learned model in regularisation by projection) plays the role of a regularisation parameter, hence for noisy training data the size of the training set should be chosen in agreement with the noise level. This is in accordance with the results in~\cite[Thm. 4.2]{Burger_Engl:1999} on training neural networks from noisy data, where the number of neurons in the network plays a role similar to the number of training inputs in our setting.}


\section{Setting and Main Assumptions}\label{sec:setting}
We consider a linear inverse problem, consisting in solving 
\begin{equation}\label{eq:Au=y}
Au = y,
\end{equation}
with a linear bounded forward operator $A \colon \U \to \Y$ acting between separable Hilbert spaces $\U$ and $\Y$.
Different to the standard setting of linear inverse problems (see for instance \cite{Groetsch:1984}) we do not make use of 
the knowledge of the operator $A$ but only assume knowledge of training pairs \eqref{eq:pairs}. 
This is beneficial if the modeling of the forward operator is rather incomplete, 
very uncertain, or the numerical evaluation is costly.

We assume that $A$ is injective and its inverse $A^{-1}$ is unbounded, hence the problem of solving~\eqref{eq:Au=y} is ill-posed. 
Instead of the exact measurement $y$ we are given a noisy measurement $y^\delta$, satisfying 
\begin{equation} \label{eq:delta}
\norm{y-y^\delta} \leq \delta.
\end{equation}
Throughout this paper, the solution of \eqref{eq:Au=y} with \emph{exact data} $y \in \range{A}$ will be denoted by $u^\dagger$.

We refer to elements of $\U$ as 
\emph{inputs} 
and to elements of $\Y$ as 
\emph{outputs}. 
Accordingly, speaking of the training pairs~\eqref{eq:pairs}, we will refer to $\trim$ as {\emph{training inputs}} and to $\trdata$ as {\emph{training outputs}}.

We make the following assumptions on the training pairs throughout this paper.
\begin{assumption}[Independence, uniform boundedness, sequentiality]\label{ass_1} 
	 \ \\
	\textit{Linear independence:} For every $n \in \N$ the inputs $\trim$ are linearly independent.\\
	\textit{Uniform boundedness:} There exist constants $c_u,C_u>0$ such that $c_u \leq \norm{\ui} \leq C_u$  for all $i \in \N$. Hence with no loss of generality we will assume that $\norm{\ui} =1$ for all $i \in \N$. \\
	\textit{Sequentiality:} The families of training pairs are nested, i.e. for every $n \in \N$ 
	\begin{equation}\label{ass_2} 
	\{\ui, \yi\}_\ind{i}{1}{n+1} = \trpairs \cup \{u^{n+1}, y^{n+1}\}.
	\end{equation}
\end{assumption}

\rb{If the training inputs $\trim$ are linearly dependent, we will need to discard part them. Which ones to discard is an interesting question in itself, and a possible topic for future work.}

\begin{definition} \label{def:Un}
	We denote the spans of the inputs $\trim$ and the outputs $\trdata$ by
	\begin{equation}
	\U_n \defeq \span\trim \qand \Y_n \defeq \span\trdata.
	\end{equation}
	Orthogonal projection operators onto $\U_n$ and $\Y_n$ are denoted by $P_{\U_n}$ and $P_{\Y_n}$, respectively.
\end{definition}

\begin{remark}\label{prop:data_lin_indep}
	From the Assumption \ref{ass_1} and the hypotheses made on $A$ we deduce that:
	\begin{enumerate} 
		\item Since the forward operator $A$ is injective, the outputs $\trdata$ are also linearly independent. 
		\item Since $A$ is a bounded operator and the inputs $\trim$ are uniformly bounded, the outputs $\trdata$ are also uniformly bounded and $\norm{y^i}\leq \norm{A}$, for all $i\in \N$.
	\end{enumerate}
	From Assumption~\ref{ass_1} and \eqref{ass_2}, it follows that the subspaces $\U_n$, $\Y_n$ are nested, that is 
	\begin{equation*}
	\U_n \subset \U_{n+1}, \quad \Y_n \subset \Y_{n+1}, \quad \text{for all $n$.}
	\end{equation*}
\end{remark}
We also need to make an assumption that the training data are sufficiently rich in the sense  that the collection of all inputs $\trim$ is dense in $\U$.
\begin{assumption}[Density]\label{ass_3}
	We assume that the subspaces spanned by the inputs $\trim$ are dense in $\U$, that is 
	\begin{equation*}
	\cl{\bigcup_{n \in \N} \U_n} = \U. 
	\end{equation*}
\end{assumption}
As a consequence of the previous assumptions, we have
\begin{proposition} \label{pr:dense_Y}  
	By Assumption~\ref{ass_3} the subspaces spanned by the training outputs $\trdata$ are dense in $\cl{\range{A}}$, i.e.,
	\begin{equation*}
	\cl{\bigcup_{n \in \N} \Y_n} = \cl{\range{A}}. 
	\end{equation*}
\end{proposition}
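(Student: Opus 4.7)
The plan is to establish the two inclusions $\cl{\bigcup_{n} \Y_n} \subset \cl{\range{A}}$ and $\cl{\range{A}} \subset \cl{\bigcup_{n} \Y_n}$ separately.

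The first inclusion is immediate: since $\yi = A \ui \in \range{A}$ for every $i$, and $\range{A}$ is a linear subspace of $\Y$, we have $\Y_n = \span\trdata \subset \range{A}$ for every $n$. Taking the union and then the closure preserves this containment, giving $\cl{\bigcup_n \Y_n} \subset \cl{\range{A}}$.

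For the reverse inclusion, I would take an arbitrary $y \in \range{A}$ and write $y = Au$ for some $u \in \U$. By Assumption~\ref{ass_3}, the set $\bigcup_n \U_n$ is dense in $\U$, so there is a sequence $u_k \in \U_{n_k}$ with $u_k \to u$ in $\U$. The key observation is that since $A$ is linear, $A \U_n = A\, \span\trim = \span\trdata = \Y_n$, so $Au_k \in \Y_{n_k} \subset \bigcup_n \Y_n$. By continuity of $A$, $Au_k \to Au = y$ in $\Y$, which shows $y \in \cl{\bigcup_n \Y_n}$. Hence $\range{A} \subset \cl{\bigcup_n \Y_n}$, and passing to the closure (since the right-hand side is already closed) yields $\cl{\range{A}} \subset \cl{\bigcup_n \Y_n}$.

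Combining the two inclusions gives the claimed equality. There is no real obstacle here; the argument only uses linearity and boundedness of $A$, the definition $\yi = A\ui$, and the density assumption on $\bigcup_n \U_n$. The only point worth being a little careful about is that one must invoke continuity of $A$ (so that convergence of $u_k$ transfers to convergence of $Au_k$), which is guaranteed by the standing assumption that $A$ is bounded.
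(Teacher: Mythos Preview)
Your proof is correct and complete. The paper actually states this proposition without proof, treating it as an immediate consequence of Assumption~\ref{ass_3} and the boundedness of $A$; your two-inclusion argument is precisely the standard verification one would supply, and there is nothing to add.
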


\bb{We carry out our analysis in the setting when the training pairs $\trpairs \subset \U\times\Y$ are infinite-dimensional. The case when these pairs are discretised, i.e. $\trpairs \subset \R^k\times\R^l$ and $k,l,n \to \infty$ at certain rates is beyond the scope of this paper.}

\section{Regularisation by Projection} \label{sec:reg_proj}
Let $y \in \range{A}$ be the exact, noise-free right-hand side in~\eqref{eq:Au=y} and consider the following projected problem
\begin{equation} \label{AP_n u = y}
AP_{\U_n} u = y.
\end{equation}
Our goal in this section is to provide an explicit representation formula for the minimum norm solution of ~\eqref{AP_n u = y} in terms of the training pairs \eqref{eq:pairs}. We first observe that the minimum norm solution of ~\eqref{AP_n u = y} is given by
\begin{equation}\label{eq:udn1}
\udn = (AP_{\U_n})^\dagger y,
\end{equation}
where $(AP_{\U_n})^\dagger$ denotes the Moore-Penrose inverse of $AP_{\U_n}$ (see \cite{Nas76}). The superscript $^{\U}$ in $\udn$ reflects the fact that the projection in~\eqref{AP_n u = y} takes place in $\U$. The need for this notation will become clear in Section~\ref{sec:dual_lsq}, where will use a projection of the original equation~\eqref{eq:Au=y} in the space $\Y$.

\subsection{A reconstruction formula}
\label{sec:rec_formula}
The following result shows that in the injective case, a simple formula for $(AP_{\U_n})^\dagger$ exists.

\begin{theorem}\label{thm:pseudoinv_image}
	Let  $P_{\U_n}$ and $P_{\Y_n}$ as in Definition~\ref{def:Un}. 
	 Then the Moore-Penrose inverse of $AP_{\U_n}$ is given by
	\begin{equation*}
	(AP_{\U_n})^\dagger = A^{-1} P_{\Y_n}.
	\end{equation*}
\end{theorem}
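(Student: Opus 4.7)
The plan is to verify, via the standard variational characterisation of the Moore--Penrose inverse, that $u := A^{-1} P_{\Y_n} y$ is the unique minimum-norm least-squares solution of $AP_{\U_n} v = y$. Three structural observations make this almost mechanical. First, since $\U_n$ is finite-dimensional, the range $\range{AP_{\U_n}} = A(\U_n) = \Y_n$ is finite-dimensional and therefore closed, so $(AP_{\U_n})^\dagger$ is defined on all of $\Y$. Second, using injectivity of $A$ one identifies the kernel
\begin{equation*}
\ker{AP_{\U_n}} = \ker{P_{\U_n}} = \orth{\U_n},
\end{equation*}
so that candidates for the minimum-norm least-squares solution lie in $\orth{(\orth{\U_n})} = \U_n$. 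Third, since $P_{\Y_n} y \in \Y_n = A(\U_n) \subset \range{A}$, the preimage $A^{-1} P_{\Y_n} y$ is a well-defined element of $\U_n$.

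With these preliminaries in place, the verification itself is immediate. For $u = A^{-1} P_{\Y_n} y \in \U_n$ one has $P_{\U_n} u = u$, and hence
\begin{equation*}
AP_{\U_n} u = Au = P_{\Y_n} y,
\end{equation*}
which, since $P_{\Y_n}$ is the orthogonal projection onto $\range{AP_{\U_n}} = \Y_n$, is precisely the least-squares normal equation for the problem $AP_{\U_n} v = y$. Because moreover $u \in \U_n = \orth{(\ker{AP_{\U_n}})}$, it is the \emph{unique} minimum-norm least-squares solution, so by the defining characterisation $u = (AP_{\U_n})^\dagger y$.

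The only mildly subtle step is the identification $\range{AP_{\U_n}} = \Y_n$ combined with invoking injectivity of $A$ to make sense of $A^{-1}$ on this subspace; once this is in hand, finite-dimensionality of $\U_n$ removes the closed-range subtleties that one usually needs to worry about when manipulating pseudo-inverses, and no further analytical work is required.
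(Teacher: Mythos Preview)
Your proof is correct. Both you and the paper rely on the same structural ingredients --- the identification $\range{AP_{\U_n}} = A(\U_n) = \Y_n$ and, via injectivity of $A$, $\ker{AP_{\U_n}} = \orth{\U_n}$ --- but you deploy them differently. The paper verifies the four Moore--Penrose equations algebraically, using the identities $P_{\Y_n}AP_{\U_n} = AP_{\U_n}$ and $P_{\U_n}A^{-1}P_{\Y_n} = A^{-1}P_{\Y_n}$ to check each of the defining conditions in turn. You instead invoke the variational characterisation directly: you show that $A^{-1}P_{\Y_n}y$ is a least-squares solution (since $AP_{\U_n}u = P_{\Y_n}y$ is the projection of $y$ onto the closed range) and that it lies in $\orth{\ker{AP_{\U_n}}}$, hence is the minimum-norm one. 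Your route is a little more conceptual and connects immediately to the interpretation of $(AP_{\U_n})^\dagger y$ used throughout the rest of the section; the paper's route is a self-contained algebraic verification that does not require recalling the variational characterisation. Neither approach is materially shorter or more general than the other here.
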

\begin{proof}
	First we observe that $\range{A^{-1} P_{\Y_n}} = \U_n = \orth{\ker{AP_{\U_n}}}$. To see this, note that 
    for any $z \in \Y$ we have
    \begin{equation*}
       P_{\Y_n} z = \sum_{i=1}^n \lambda_i^n \yi 
    \end{equation*}   
    (where the expansion coefficients, in general, change with $n$) and hence for $u = A^{-1}P_{\Y_n} z$ we have by definition 
    \begin{equation*}
      u=A^{-1} \left( \sum_{i=1}^n \lambda_i^n \yi \right) =   \sum_{i=1}^n \lambda_i^n \ui \in \U_n.
    \end{equation*}
    On the other hand, for any $u \in \U_n$ we can also find a representation in the form $u = A^{-1}P_{\Y_n} y$ for some $y \in \Y$. Thus the first identity is shown. For the second identity, let $v \in \ker{AP_{\U_n}}$. Since $A$ is injective, this is equivalent to $v \in \ker{P_{\U_n}} = {\U_n^\bot}$ and hence $\ker{AP_{\U_n}} = \U_n^\bot$. Since $\U_n$ is finite-dimensional and hence closed, this implies the second identity.
	
	Using the obvious identities
	\begin{equation*}
	P_{\Y_n}AP_{\U_n} = AP_{\U_n} \quad \text{and} \quad P_{\U_n} A^{-1} P_{\Y_n} = A^{-1}P_{\Y_n},
	\end{equation*}
	we directly verify the Moore-Penrose equations (see for instance \cite{Nas76}):
	\begin{itemize}
		\item $AP_{\U_n} A^{-1} P_{\Y_n} AP_{\U_n} = A P_{\U_n} A^{-1} AP_{\U_n} = AP_{\U_n}$;
		\item $A^{-1} P_{\Y_n} A P_{\U_n} A^{-1} P_{\Y_n} = A^{-1} P_{\Y_n} A A^{-1} P_{\Y_n}  = A^{-1} P_{\Y_n}$;
		\item $A^{-1} P_{\Y_n} AP_{\U_n} = A^{-1} AP_{\U_n} = P_{\U_n} = I - P_{\U_n^\perp} = I-P_{\ker{AP_{\U_n}}}$;
		\item $AP_{\U_n} A^{-1} P_{\Y_n} = A A^{-1} P_{\Y_n} = P_{\Y_n} = P_{\range{AP_{\U_n}}}$.
	\end{itemize}
	Since the Moore-Penrose equations uniquely characterise the Moore-Penrose inverse, the assertion follows.
\end{proof}
Combination of \eqref{eq:udn1} and Theorem \ref{thm:pseudoinv_image} shows that the Moore-Penrose inverse of \eqref{AP_n u = y} is given by 
\begin{equation}\label{eq:mp}
	\udn = A^{-1}P_{\Y_n} y.
\end{equation}

\paragraph{Gram-Schmidt orthogonalisation in $\Y$.} 

We start by applying the Gram-Schmidt process to the outputs $\trdata$ to obtain an orthonormal basis of $\Y_n$ \rb{(if the training data were not linearly independent, this will be detected by the Gram-Schmidt algorithm and the redundant data will be dismissed, effectively reducing the size of the training set $n$)}. We denote this basis by $\trdatab$. By solving $A u=\orthdata y^i$ for $i=1,\dots,n$, we obtain, in general, a non-orthogonal basis $\trimb$ of $\U_n$. In a matrix form, we can write that
\begin{equation}\label{eq:Y_bar_and_U_bar}
Y_n = \orthdata Y_n R_n \qand \orthdata U_n = U_n R_n^{-1},
\end{equation}
where $Y_n$, $\orthdata Y_n$ and $\orthdata U_n$ \bb{are composed of a finite number of infinite-dimensional functions} $\trdata$, $\trdatab$ and $\trimb$, respectively,
\begin{equation}\label{eq:Y_bar}
Y_n \defeq 
\begin{pmatrix} 
y^1, \dots, y^n
\end{pmatrix}, 
\quad 
\orthdata Y_n = 
\begin{pmatrix} 
\orthdata y^1, \dots, \orthdata y^n
\end{pmatrix},
\quad 
\orthdata U_n = 
\begin{pmatrix} 
\orthdata u^1, \dots, \orthdata u^n
\end{pmatrix}
\end{equation}
and $R_n$ is an upper triangular $n \times n$ transformation matrix
\begin{equation}\label{eq:R_n}
R_n \defeq 
\begin{pmatrix} 
(y^1, \orthdata y^1) & (y^2, \orthdata y^1) & \cdots & (y^n, \orthdata y^1) \\ 
0 			& (y^2, \orthdata y^2) & \cdots & (y^n, \orthdata y^2) \\ 
0			&	0       & \ddots 	& \vdots \\
0 &  0 & \cdots & (y^n, \orthdata y^n)
\end{pmatrix}.
\end{equation}



Taking the noise-free right-hand side $y \in \range{A}$ in~\eqref{eq:Au=y} and expanding $P_{\Y_n}y\in \mathcal{Y}_n$ in the orthonormal basis $\trdatab$, we get
\begin{equation*}
P_{\Y_n} y = \mysum{i}{1}{n} (y, \orthdata y^i) \orthdata y^i,
\end{equation*}  
hence using Theorem~\ref{thm:pseudoinv_image} we can write~\eqref{eq:udn1} as follows
\begin{equation}\label{eq:udn2}
\udn = A^{-1} P_{\Y_n} y = \mysum{i}{1}{n} (y, \orthdata y^i)  \orthdata u^i,
\end{equation}
where $\trimb$ are the transformed inputs satisfying $A \orthdata u^i = \orthdata y^i$, $i = 1,...,n$.

The transformed inputs can be calculated as follows:
\begin{remark}\label{rem:norm_ubar}
	It can be easily verified that the transformed inputs $\trimb$ satisfy
	\begin{equation*}
		\orthdata u^n = \frac{u^n - \sum_{i=1}^{n-1} (y^n, \orthdata y^i) \orthdata u^i}{\norm{y^n - P_{\Y_{n-1}} y^n}}.
	\end{equation*}
	Hence,
	\begin{equation} \label{eq:trim}
		\norm{\orthdata u^n} = \frac{\norm{u^n - \sum_{i=1}^{n-1} (y^n, \orthdata y^i) \orthdata u^i}}{\norm{y^n - P_{\Y_{n-1}} y^n}} \geq \frac{\norm{u^n - P_{\U_{n-1}}u^n}}{\norm{y^n - P_{\Y_{n-1}} y^n}}.
	\end{equation}
\end{remark}

The next remark considers a very particular set of training pairs, namely the singular values of a compact operator. 
With such peculiar training data regularisation by projection becomes a regularisation method, as the following remark shows:
\begin{remark}\label{rem:svd}
Let $A$ be compact with singular value decomposition $\{\sigma^i,x^i,z^i\}_{i \in \N}$, and assume that the 
	training pairs are given by $\{(u^i:=x^i,y^i = \sigma^i z^i = A x^i)\}_{i=1,...,n}$ (see for instance \cite{engl:1996}).
Consequently, $\trdatab = \{z^i\}_{i \in \N}$ is an orthonormal system and accordingly we get $\trimb = \left\{\frac{x^i}{\sigma^i} \right\}$. Then~\eqref{eq:udn2} becomes
\begin{equation*}
\udn = \mysum{i}{1}{n} \frac{1}{\sigma^i} (y, z^i)  x^i = \mysum{i}{1}{n} \frac{1}{\sigma^i} (A u^\dagger, z^i)  x^i = \mysum{i}{1}{n} (u^\dagger, x^i) x^i,
\end{equation*}
i.e. $\udn$ is the projection of the exact solution $u^\dagger$ onto the span of the first $n$ singular vectors $\{x^i\}_{i=1,...,n}$.

In fact this method is a regularisation method (see \cite{engl:1996}).

\end{remark}

\subsection{Behaviour in the limit of infinite data $n \to \infty$}
The 
 nonconvergence example
by Seidman~\cite{Seidman:1980} demonstrates that, {\bf in general}, the minimum norm solution $\udn$ of~\eqref{AP_n u = y} 
{\bf does not converge} to the exact solution $u^\dagger$ as $n \to \infty$,  i.e.  $A^{-1} P_{\Y_n}$ is not a regularisation of $A^{-1}$ (see \eqref{eq:mp}). ``In general'' in the above sentence refers to particular cases such as 
when the training pairs are not singular functions, in which case the method becomes truncated SVD, which is indeed a regularisation method as we have outlined in Remark \ref{rem:svd}. 

Below we analyse the convergence of the Moore-Penrose inverse $\udn$ of \eqref{AP_n u = y} as represented in ~\eqref{eq:udn2} in a 
general setting, in particular, when the training pairs are not spectral pairs as discussed in Remark \ref{rem:svd}.
Letting $n \to \infty$ and applying the Gram-Schmidt process to the sequence $\{y^i\}_{i\in\N}$, we obtain an orthonormal basis $\trdatabinf$  of $\cl{\range{A}}$ and a corresponding sequence $\trimbinf$ such that $A\orthdata u^i = \orthdata y^i$. 

An essential algorithmic step for implementing~\eqref{eq:udn2} is the orthonormalisation of the set $\{y^i\}$, which we perform 
with the Gram-Schmidt algorithm. Therefore we analyse first the stability of this algorithm, which depends on the diagonal 
elements $(y^i, \orthdata y^i)$ of the matrices $R_n$, $i=1,...,n$. It is clear that 
\begin{equation} \label{eq:proj}
\abs{(y^i, \orthdata y^i)} = \norm{y^i - P_{\Y_{i-1}}y^i} \quad \forall i \in \N.
\end{equation}

If $A$ is compact then the part of $y^n$ that lies outside the span of the previous points $\{y^i\}_{i=1,...,n-1}$ will become arbitrary 
small as $n \to \infty$, which shows the instability of the Gram-Schmidt algorithm:
\begin{proposition}\label{prop:proj_yn}
	If $A$ is compact then
	\begin{equation*}
	\liminf_{i\to \infty}\norm{y^i - P_{\Y_{i-1}}y^i} = 0.
	\end{equation*}
\end{proposition}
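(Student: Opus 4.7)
The plan is to use compactness of $A$ combined with uniform boundedness of the training inputs to extract a Cauchy subsequence in $\Y$, and then exploit the fact that the orthogonal projection onto $\Y_{i-1}$ minimises distance to elements of that subspace.

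First I would recall from Assumption~\ref{ass_1} that $\|u^i\| = 1$ for every $i \in \N$, so $\{u^i\}_{i \in \N}$ is a bounded sequence in $\U$. Since $A$ is compact, the image sequence $\{y^i\}_{i \in \N} = \{Au^i\}_{i \in \N}$ has a norm-convergent subsequence in $\Y$. Extract such a subsequence $\{y^{i_k}\}_{k \in \N}$; being convergent, it is Cauchy in $\Y$, so $\|y^{i_{k+1}} - y^{i_k}\| \to 0$ as $k \to \infty$.

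Next I would use the nesting of the subspaces $\Y_n$ from Remark~\ref{prop:data_lin_indep}. Since the indices $i_k$ are strictly increasing, we have $i_k \leq i_{k+1}-1$, and therefore $y^{i_k} \in \Y_{i_{k+1}-1}$. Because $P_{\Y_{i_{k+1}-1}}$ is the orthogonal projection onto $\Y_{i_{k+1}-1}$, it realises the minimum distance from $y^{i_{k+1}}$ to this subspace, so
\begin{equation*}
\norm{y^{i_{k+1}} - P_{\Y_{i_{k+1}-1}} y^{i_{k+1}}} \leq \norm{y^{i_{k+1}} - y^{i_k}}.
\end{equation*}
Letting $k \to \infty$, the right-hand side tends to zero, so the $\liminf$ along the subsequence $\{i_{k+1}\}$ of the quantity $\|y^i - P_{\Y_{i-1}}y^i\|$ is zero, and since this quantity is nonnegative, $\liminf_{i \to \infty}\|y^i - P_{\Y_{i-1}}y^i\| = 0$.

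There is really no substantial obstacle here; the argument is a standard consequence of the compactness of $A$. The only small point to be careful about is the index bookkeeping to make sure that $y^{i_k}$ actually sits inside the subspace $\Y_{i_{k+1}-1}$ used in the projection, which is immediate from strict monotonicity of the subsequence indices.
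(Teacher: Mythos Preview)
Your argument is correct and is essentially the same as the paper's: extract a convergent (hence Cauchy) subsequence of $\{y^i\}$ via compactness of $A$ and bound the projection residual by the distance between consecutive subsequence elements. If anything, your index bookkeeping is cleaner than the paper's, which says it ``does not relabel'' the subsequence but then writes $\|y^i - y^{i-1}\|$ as if it had.
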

\begin{proof} Since by Assumption~\ref{ass_1} all training images $\{u^i\}_{i \in \N}$ are uniformly bounded and $A$ is compact, the sequence $\{y^i\}_{i\in \N}$ has a convergent subsequence (that we do not relabel), which, in particular, satisfies
	\begin{equation*}
	\norm{y^i - y^{i-1}} \to 0 \quad \text{as $i \to \infty$}.
	\end{equation*}
	Then we obtain the following estimate
	\begin{equation}\label{eq:est_proj_y^n}
	\norm{y^i - P_{\Y_{i-1}} y^i} = \min_{y \in \Y_{i-1}} \norm{y^i - y} \leq \norm{y^i - y^{i-1}} \to 0,
	\end{equation}
	which proves  the assertion. 
\end{proof}
If the inputs $\trim$ are such that 
\begin{equation*}
\norm{u^n - P_{\U_{n-1}}u^n} \geq C \quad \forall n \in \N,
\end{equation*}
then clearly $\norm{\orthdata u^i} \to \infty$ as $n \to \infty$ by Proposition~\ref{prop:proj_yn} 
	and Remark~\ref{rem:norm_ubar}. 
Hence, the Gram-Schmidt process becomes unstable as $n \to \infty$ and the transformed inputs $\trimb$ may become 
unbounded, as we see from \eqref{eq:proj} and \eqref{eq:R_n}. 
Since $\trimb$ are linearly independent, we can expand the projection $P_{\U_n} u^\dagger$ of the exact solution of~\eqref{eq:Au=y} in the basis of $\U_n$ as follows
\begin{equation}\label{eq:proj_udagger_Un}
P_{\U_n} u^\dagger = \sum_{i=1}^n \alpha_i^n \orthdata u^i, 
\end{equation}
where the expansion coefficients might be varying with respect to $n$, i.e., $\alpha_i^n \neq 
	\alpha_i^m$ for $n \neq m$, since $\trimb$ are not orthogonal. 

The next result shows how far the coefficients of the expansion of the Moore-Penrose approximation $\udn$ in~\eqref{eq:udn2}, 
i.e., $(y, \orthdata y^i)$, deviate from the coefficients $\alpha_i^n$ in~\eqref{eq:proj_udagger_Un} of the best-approximating 
solution $P_{\U_n} u^\dagger$ of $u^\dagger$.
\begin{proposition} Let $P_{\U_n} u^\dagger$ be represented as in \eqref{eq:proj_udagger_Un}, then the following identity holds
\begin{equation*}
\sum_{i=1}^n ( (y, \orthdata y^i) - \alpha_i^n)^2 = \norm{y - AP_{\U_n} u^\dagger}^2 - \norm{y-P_{\Y_n} A u^\dagger}^2.
\end{equation*}
\end{proposition}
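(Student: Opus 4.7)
The plan is to reduce the identity to an instance of the Pythagorean theorem in $\Y_n$, by expressing both $A P_{\U_n} u^\dagger$ and $P_{\Y_n} A u^\dagger$ as linear combinations of the orthonormal basis $\trdatabinf$ of $\cl{\range A}$ restricted to $\Y_n$.

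First I would rewrite the two objects appearing on the right-hand side. Since $y = A u^\dagger$ and $P_{\Y_n} y = \sum_{i=1}^n (y, \orthdata y^i) \orthdata y^i$, one of them lives in $\Y_n$. For the other, applying $A$ to \eqref{eq:proj_udagger_Un} and using $A \orthdata u^i = \orthdata y^i$ gives
\begin{equation*}
 A P_{\U_n} u^\dagger = \sum_{i=1}^n \alpha_i^n \orthdata y^i \in \Y_n.
\end{equation*}
Thus both $A P_{\U_n} u^\dagger$ and $P_{\Y_n} A u^\dagger$ lie in $\Y_n$, and their difference is
\begin{equation*}
 P_{\Y_n} A u^\dagger - A P_{\U_n} u^\dagger = \sum_{i=1}^n \bigl((y, \orthdata y^i) - \alpha_i^n\bigr) \orthdata y^i.
\end{equation*}
Since $\{\orthdata y^i\}_{i=1}^n$ is orthonormal, taking squared norm yields the left-hand side of the claimed identity.

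Second, I would invoke orthogonality of the projection. Because $P_{\Y_n} A u^\dagger$ is the orthogonal projection of $y = A u^\dagger$ onto $\Y_n$, the residual $y - P_{\Y_n} A u^\dagger$ is orthogonal to $\Y_n$, and in particular to the vector $P_{\Y_n} A u^\dagger - A P_{\U_n} u^\dagger \in \Y_n$. Writing
\begin{equation*}
 y - A P_{\U_n} u^\dagger = \bigl(y - P_{\Y_n} A u^\dagger\bigr) + \bigl(P_{\Y_n} A u^\dagger - A P_{\U_n} u^\dagger\bigr)
\end{equation*}
and applying the Pythagorean identity gives
\begin{equation*}
 \norm{y - A P_{\U_n} u^\dagger}^2 = \norm{y - P_{\Y_n} A u^\dagger}^2 + \norm{P_{\Y_n} A u^\dagger - A P_{\U_n} u^\dagger}^2.
\end{equation*}

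Combining the two displayed expressions for $\norm{P_{\Y_n} A u^\dagger - A P_{\U_n} u^\dagger}^2$ yields the claim. There is no real obstacle here: everything is a bookkeeping exercise in $\Y_n$ once one recognises that the identity is just Pythagoras applied to the orthogonal decomposition of $y - A P_{\U_n} u^\dagger$ induced by $P_{\Y_n}$, together with Parseval in the orthonormal basis $\{\orthdata y^i\}_{i=1}^n$.
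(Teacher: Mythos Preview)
Your proof is correct and is essentially the same as the paper's: the paper expands $y = Au^\dagger$ in the full orthonormal basis $\{\orthdata y^i\}_{i\in\N}$, writes $A(u^\dagger - P_{\U_n}u^\dagger) = \sum_{i=1}^n((y,\orthdata y^i)-\alpha_i^n)\orthdata y^i + \sum_{i>n}(y,\orthdata y^i)\orthdata y^i$, and takes the squared norm using orthonormality, which is precisely your Pythagorean decomposition written in coordinates. The only cosmetic difference is that you phrase the orthogonality as ``Pythagoras applied to $P_{\Y_n}$'' while the paper computes directly via Parseval in the full basis.
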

\begin{proof}
Consider the residual
\begin{equation*}
u^\dagger - P_{\U_n} u^\dagger = u^\dagger - \sum_{i=1}^n \alpha_i^n \orthdata u^i.
\end{equation*}
Applying $A$ to both sides and expanding $y = A u^\dagger$ in the orthonormal basis $\trdatabinf$, we get
\begin{equation*}
A(u^\dagger - P_{\U_n} u^\dagger) = \sum_{i=1}^\infty (y, \orthdata y^i) \orthdata y^i - \sum_{i=1}^n \alpha_i^n \orthdata y^i = \sum_{i=1}^n ( (y, \orthdata y^i) - \alpha_i^n) \orthdata y^i +  \sum_{i=n+1}^\infty (y, \orthdata y^i) \orthdata y^i
\end{equation*}
and hence
\begin{equation*}
\norm{A(u^\dagger - P_{\U_n} u^\dagger)}^2 = \sum_{i=1}^n ( (y, \orthdata y^i) - \alpha_i^n)^2 + \norm{y-P_{\Y_n} y}^2.
\end{equation*}
Rearranging terms, we get the assertion.
\end{proof}
The first term on the right hand side becomes $0$ if $AP_{\U_n}=P_{\Y_n}A$, which is for instance the case if the training 
	pairs consist of the singular value decomposition (see Remark \ref{rem:svd}). Therefore, the better the subspaces $\U_n$ and $\Y_n$ agree with the spaces spanned by the spectral {functions} of $A$, 
	the smaller the discrepancy between $(y, \orthdata y^i)$ {and} $\alpha_i^n$ for $i,n \in \N$ will be.
%

\subsection{Convergence analysis} \label{conv_an}
In the following we analyse weak and strong convergence of the Moore-Penrose approximation $\udn$ of \eqref{AP_n u = y}, also in the case 
of noisy data.

\subsubsection{Weak convergence}\label{sec:weak}
The first result gives us weak convergence of $\udn$ for $n \to \infty$ in the case of noise free data $y$ (see \eqref{eq:delta}).

\begin{theorem}\label{thm:engl_weak}
Let $y \in \range{A}$ be the exact right-hand side in~\eqref{eq:Au=y} and $\trpairs$ the training pairs defined in~\eqref{eq:pairs}. Let $\lambda_i^n$ be the expansion coefficients of $P_{\Y_n} y \in \Y_n$ in the non-orthogonal basis of training outputs $\trdata$
\begin{equation*}
P_{\Y_n}y = \sum_{i=1}^n \lambda_i^n y^i.
\end{equation*}
Then $\udn$ as defined in~\eqref{eq:udn2} is bounded uniformly in $n$ and converges weakly to $u^\dagger$
\begin{equation*}
\udn \weakto u^\dagger \qquad \text{as $n \to \infty$}
\end{equation*}
if and only if there exists a constant $C_\lambda < \infty$ such that
\begin{equation}\label{old_ass_3}
	 \norm{\mysum{i}{1}{n} \lambda_i^n u^i } \leq C_\lambda \qquad \forall n\in\N.
\end{equation}
\end{theorem}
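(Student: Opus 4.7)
The plan is to first rewrite $\udn$ in a form that makes the condition~\eqref{old_ass_3} a literal statement of uniform boundedness, then deduce weak convergence from boundedness plus a continuity argument. Starting from $P_{\Y_n}y=\sum_{i=1}^n\lambda_i^n y^i$ and using $y^i = Au^i$, linearity of $A^{-1}$ on $\range{A}$ gives
\begin{equation*}
\udn = A^{-1}P_{\Y_n}y = \sum_{i=1}^n \lambda_i^n u^i,
\end{equation*}
so condition \eqref{old_ass_3} is nothing but $\norm{\udn}\leq C_\lambda$ for all $n$. The "only if" direction is then immediate: if $\udn$ is uniformly bounded, \eqref{old_ass_3} holds with $C_\lambda = \sup_n\norm{\udn}$.

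For the "if" direction, assume \eqref{old_ass_3}. Uniform boundedness is immediate by the identity above, so it remains to show $\udn\weakto u^\dagger$. Since $\U$ is a Hilbert space, every subsequence of $\{\udn\}$ admits a further weakly convergent subsequence (not relabelled) with some limit $\tilde u \in \U$. I will identify $\tilde u$ with $u^\dagger$. Apply the bounded linear operator $A$, which is weak-to-weak continuous, to obtain $A\udn \weakto A\tilde u$. On the other hand, by construction,
\begin{equation*}
A\udn = AA^{-1}P_{\Y_n}y = P_{\Y_n}y.
\end{equation*}
Since $y \in \range{A}\subset \cl{\range{A}}$ and $\bigcup_n \Y_n$ is dense in $\cl{\range{A}}$ by Proposition~\ref{pr:dense_Y}, we have $P_{\Y_n}y\to y$ strongly in $\Y$, hence weakly. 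Uniqueness of weak limits yields $A\tilde u = y = Au^\dagger$, and injectivity of $A$ gives $\tilde u = u^\dagger$.

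Thus every weakly convergent subsequence of $\{\udn\}$ has the same weak limit $u^\dagger$. Combined with uniform boundedness (the whole sequence lies in a weakly compact ball of the Hilbert space $\U$), this forces the whole sequence to converge weakly: indeed, if $\udn\not\weakto u^\dagger$, there exist $\phi\in\U$ and $\eps>0$ and a subsequence along which $|(\udn - u^\dagger,\phi)|\geq\eps$, which by the Hilbert-space weak compactness admits a further weakly convergent subsequence whose limit must be $u^\dagger$ by the previous step, contradicting the $\eps$-separation. The only delicate point in this argument is the strong convergence $P_{\Y_n}y\to y$; everything else is bookkeeping once the identity $\udn=\sum_i\lambda_i^n u^i$ is in hand.
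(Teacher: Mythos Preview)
Your proof is correct and follows the same skeleton as the paper: both start from the identity $\udn = A^{-1}P_{\Y_n}y = \sum_{i=1}^n \lambda_i^n u^i$, which immediately makes \eqref{old_ass_3} equivalent to uniform boundedness of $\udn$. The only difference is in how the equivalence ``uniformly bounded $\Leftrightarrow$ weakly convergent to $u^\dagger$'' is justified. The paper simply cites \cite[Thm.~3.20]{engl:1996}, whereas you supply the argument directly via weak sequential compactness in Hilbert space, the identity $A\udn = P_{\Y_n}y \to y$ (strong convergence from Proposition~\ref{pr:dense_Y}), weak-to-weak continuity of $A$, and injectivity. Your version is self-contained and makes explicit which structural ingredients are actually used; the paper's version is shorter but relies on an external reference. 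Substantively the two proofs coincide.
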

\begin{proof}
We rewrite~\eqref{eq:udn2} as follows
\begin{equation*}
\udn = A^{-1} P_{\Y_n}y = A^{-1} \sum_{i=1}^n \lambda_i^n y^i=  \sum_{i=1}^n \lambda_i^n u^i,
\end{equation*}
hence $\udn$ is uniformly bounded if and only if~\eqref{old_ass_3} holds. By~\cite[Thm. 3.20]{engl:1996}, $\udn \weakto u^\dagger$ if and only if it is uniformly bounded.
\end{proof}

Below we provide \emph{a priori} conditions on the training data~\eqref{eq:pairs} and the exact solution $u^\dagger$ that ensure boundedness of~\eqref{eq:udn2}. Moreover, we discuss them in the context of Seidman's nonconvergence example~\cite{Seidman:1980} for convergence of regularisation 
by projection. 

\subsubsection{Gram-Schmidt orthogonalisation in $\U$} \label{sec:gs_im}

We will use Gram-Schmidt orthogonalisation again, this time on the inputs $\trim$. We denote the resulting orthonormal basis of $\U_n$ by $\trimhat$. Solving for $\trdatahat$ such that $A \orthim u^i = \orthim y^i$ we obtain, in general, a non-orthogonal basis $\trdatahat$ of $\Y_n$. 

Letting $n \to \infty$ and applying the Gram-Schmidt process to the sequence $\{u^i\}_{i\in\N}$ 
 we obtain an orthonormal basis $\trimhatinf$ of $\U$ and a corresponding sequence $\trdatahatinf$ such that $A \orthim u^i = \orthim y^i$. Expanding the exact solution in the basis $\trimhatinf$, we get
\begin{equation}\label{eq:exp_u_dagger}
u^\dagger = \sum_{i=1}^\infty (u^\dagger, \orthim u^i) \orthim u^i
\end{equation}
and hence for the exact data $y = Au^\dagger$ we get
\begin{equation}\label{eq:exp_y}
y = \sum_{i=1}^\infty (u^\dagger, \orthim u^i) \orthim y^i.
\end{equation}
To check that the latter series is convergent, we note that the partial sums
\begin{equation*}
\sum_{i=1}^n (u^\dagger, \orthim u^i) \orthim y^i = A \sum_{i=1}^n (u^\dagger, \orthim u^i) \orthim u^i
\end{equation*}
are bounded since the operator $A$ is bounded.

Let us recall that we consider two orthonormalisation procedures in this paper, which can be easily confused:
\begin{enumerate}
	\item For $\trpairsb$ from Section~\ref{sec:rec_formula} the outputs $\trdatab$ are orthonormal and the inputs $\trimb$ are chosen to match these outputs, while 
	\item for $\trpairshat$ the inputs $\trimhat$ are orthonormal and the outputs $\trdatahat$ are chosen to match these inputs.
\end{enumerate}
Thus a bar at the bottom stands for orthonormalisation in the range of $A$ and the bar above stands for an orthonormalisation in the domain 
of $A$.

Below we will study the effect of some regularity assumptions of the solution $u^\dagger$ on convergence of $\udn$ for $n \to \infty$.
\begin{assumption}\label{ass:l1_coefs_gt}
Coefficients of the expansion~\eqref{eq:exp_u_dagger} are in $\ell^1$, i.e.
\begin{equation*}
\sum_{i=1}^\infty \abs{(u^\dagger, \orthim u^i)} < \infty.
\end{equation*}
\end{assumption}
\rb{Assumption~\ref{ass:l1_coefs_gt} is a data-driven regularity assumption on the exact solution $u^\dagger$. We show in Section~\ref{sec:seidman} that in Seidman's nonconvergence example Assumption~\ref{ass:l1_coefs_gt} is implied by a source condition. The question whether there is more general relationship between Assumption~\ref{ass:l1_coefs_gt} and source conditions remains open.}

\rb{We emphasize that to prove even weak convergence of regularisation by projection~\eqref{eq:udn2}, additional regularity conditions are unavoidable, since, as demonstrated by Seidman's nonconvergence example, weak convergence fails in general.}
\begin{assumption}\label{ass:l2_coefs_proj}
For every $n \in \N$ and any $i \geq n+1$ consider the following expansion of $P_{\Y_n} \orthim y^i \in \Y_n$
\begin{equation}\label{eq:qn}
P_{\Y_n} \orthim y^i = \sum_{j=1}^n \beta_j^{i,n} \orthim y^j.
\end{equation}
We assume that for every $n \in \N$ 
\begin{equation}\label{eq:assump4}
\sum_{j=1}^n (\beta_j^{i,n})^2 \leq C,\qquad \textrm{for every}\,\, i \geq n+1,
\end{equation}
where $C>0$ is a constant independent of $i$ and $n$. 
\end{assumption}
We extend the definition of $\beta_j^{i,n}$ for $i \leq n$.
\begin{definition} \label{def:4}
	For every $i \leq n$, $\orthim y^i \in \Y_n$
	we define 
	$\beta_j^{i,n} = \delta_{ij}$, 
	where $\delta_{ij}$ is the Kronecker symbol.
\end{definition}
From this definition it follows that
\begin{equation}\label{eq:cons_assump4}
\sum_{j=1}^n (\beta_j^{i,n})^2 = 1,\qquad \textrm{for every}\,\, i\leq n.	
\end{equation}

\begin{theorem}\label{thm:conv_lsq}
Let Assumptions~\ref{ass:l1_coefs_gt} and~\ref{ass:l2_coefs_proj} be satisfied. Then $\udn$ as defined in~\eqref{eq:udn2} is uniformly bounded with respect to $n$.
\end{theorem}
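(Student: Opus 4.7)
The plan is to exploit the two Gram--Schmidt bases in tandem: I want to express $\udn = A^{-1} P_{\Y_n} y$ in terms of the orthonormal input basis $\{\orthim u^j\}_{j=1}^n$ of $\U_n$, so that computing $\|\udn\|$ reduces to an $\ell^2$ sum of coefficients that I can then control by Minkowski's inequality and the two assumptions.

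Concretely, I would start from the expansion~\eqref{eq:exp_y} of the exact data, $y = \sum_{i=1}^\infty (u^\dagger, \orthim u^i) \orthim y^i$, and apply the bounded operator $P_{\Y_n}$ term by term. Using~\eqref{eq:qn} and Definition~\ref{def:4} uniformly for all $i$, this gives
\begin{equation*}
P_{\Y_n} y = \sum_{i=1}^\infty (u^\dagger, \orthim u^i) \sum_{j=1}^n \beta_j^{i,n} \orthim y^j.
\end{equation*}
Since all coefficients $\beta_j^{i,n}$ are uniformly bounded (by $1$ for $i\leq n$ via \eqref{eq:cons_assump4} and by $\sqrt{C}$ for $i>n$ via \eqref{eq:assump4}) and $\sum_i |(u^\dagger,\orthim u^i)|<\infty$ by Assumption~\ref{ass:l1_coefs_gt}, I can interchange the two summations. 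Applying $A^{-1}$ (which sends the finite basis $\orthim y^j$ to $\orthim u^j$), I obtain
\begin{equation*}
\udn = \sum_{j=1}^n \left(\sum_{i=1}^\infty (u^\dagger, \orthim u^i)\, \beta_j^{i,n}\right) \orthim u^j.
\end{equation*}

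Now, using the orthonormality of $\{\orthim u^j\}_{j=1}^n$, I compute
\begin{equation*}
\|\udn\| = \left(\sum_{j=1}^n \left(\sum_{i=1}^\infty (u^\dagger,\orthim u^i)\, \beta_j^{i,n}\right)^{\!2}\right)^{\!1/2},
\end{equation*}
and applying Minkowski's inequality (viewing the expression as the $\ell^2_j$-norm of a sum over $i$ of vectors $(\beta_j^{i,n})_{j=1}^n$ scaled by $(u^\dagger,\orthim u^i)$) yields
\begin{equation*}
\|\udn\| \leq \sum_{i=1}^\infty |(u^\dagger,\orthim u^i)| \left(\sum_{j=1}^n (\beta_j^{i,n})^2\right)^{\!1/2} \leq \max(1,\sqrt{C}) \sum_{i=1}^\infty |(u^\dagger,\orthim u^i)|,
\end{equation*}
where I used \eqref{eq:cons_assump4} for $i\leq n$ and \eqref{eq:assump4} for $i\geq n+1$. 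The right-hand side is finite by Assumption~\ref{ass:l1_coefs_gt} and independent of $n$, yielding the claim.

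The main obstacle is bookkeeping rather than a deep obstruction: one has to keep the two orthonormalisation procedures ($\trpairsb$ vs.\ $\trpairshat$) distinct and carefully justify the interchange of the infinite sum over $i$ with both the projection $P_{\Y_n}$ and the application of $A^{-1}$. The interchange with $P_{\Y_n}$ is free by continuity, while the interchange with $A^{-1}$ is legitimate because after projection the series becomes an absolutely convergent finite-dimensional sum in $\Y_n$, on which $A^{-1}$ is well defined and bounded.
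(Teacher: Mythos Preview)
Your proof is correct and follows essentially the same route as the paper: expand $y$ via~\eqref{eq:exp_y}, push $P_{\Y_n}$ through the series, expand each $P_{\Y_n}\orthim y^i$ in the basis $\{\orthim y^j\}_{j\leq n}$ via the coefficients $\beta_j^{i,n}$, and then use orthonormality of $\{\orthim u^j\}$ together with Assumptions~\ref{ass:l1_coefs_gt} and~\ref{ass:l2_coefs_proj} to bound the result. The only cosmetic difference is that the paper bounds $\|\udn\|$ by the triangle inequality followed by a supremum (which it calls H\"older), obtaining $\bigl(\sum_i |(u^\dagger,\orthim u^i)|\bigr)\sup_i\|A^{-1}P_{\Y_n}\orthim y^i\|$, whereas you use Minkowski to get the slightly sharper but equivalent bound $\sum_i |(u^\dagger,\orthim u^i)|\,\bigl(\sum_j(\beta_j^{i,n})^2\bigr)^{1/2}$; your explicit interchange of sums before applying $A^{-1}$ is also a bit more careful than the paper's direct term-by-term application of $A^{-1}$.
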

\begin{proof}
Applying $P_{\Y_n}$ to~\eqref{eq:exp_y}, we get
\begin{equation*}
P_{\Y_n} y = \sum_{i=1}^\infty (u^\dagger, \orthim u^i) P_{\Y_n} \orthim y^i
\end{equation*}
and hence, by 
applying $A^{-1}$ to both sides, we get that
\begin{equation*}
\udn = A^{-1}P_{\Y_n}y = \sum_{i=1}^\infty (u^\dagger, \orthim u^i) A^{-1}P_{\Y_n} \orthim y^i.
\end{equation*}
Using H\"older's inequality, we estimate 
\begin{equation*}
\norm{\udn} \leq \sum_{i=1}^\infty \left(\abs{(u^\dagger, \orthim u^i)}\right) \sup_{i=1,...,\infty} \norm{A^{-1}P_{\Y_n} \orthim y^i}.
\end{equation*}
The sum $\sum_{i=1}^\infty \abs{(u^\dagger, \orthim u^i)}$ is bounded by Assumption~\ref{ass:l1_coefs_gt}. We further observe that
\begin{equation*}
A^{-1}P_{\Y_n} \orthim y^i = A^{-1} \sum_{j=1}^n \beta_j^{i,n} \orthim y^j = \sum_{j=1}^n \beta_j^{i,n} \orthim u^j
\end{equation*}
and, since $\trimhat$ are orthonormal, we get that
\begin{equation*}
\norm{A^{-1}P_{\Y_n} \orthim y^i}^2 = \sum_{j=1}^n (\beta_j^{i,n})^2,
\end{equation*}
which is bounded uniformly in $n$ and $i$ by Assumption~\ref{ass:l2_coefs_proj}. Therefore, $\sup_{i=1,...,\infty} \norm{A^{-1}P_{\Y_n} \orthim y^i}$ is bounded uniformly in $n$ and
\begin{equation*}
\norm{\udn} \leq C < \infty \quad \text{uniformly in $n$.}
\end{equation*} 
\end{proof}

\subsubsection{Seidman's nonconvergence example}\label{sec:seidman}
Next we discuss {Seidman's example} from~\cite{Seidman:1980} on nonconvergence of regularisation by projection.
\begin{example}[Seidman~\cite{Seidman:1980}]
Let $\{e^i\}_{i \in \N}$ be any orthonormal basis in $\U$ and $A \colon \U \to \U$ be defined as follows
\begin{equation*}
A \colon \sum_{i=1}^\infty \xi_i e^i \to \sum_{i=1}^\infty(a_i \xi_i + b_i \xi_1) e^i
\end{equation*}
with
\begin{equation} \label{eq:ab}
b_i = 
\begin{cases}
0, \quad & \text{if $i=1$}, \\
i^{-1}, \quad & \text{if $i \geq 2$},
\end{cases}
\qquad 
a_i = 
\begin{cases}
i^{-1}, \quad &\text{if $i$ is odd}, \\
i^{-\frac52}, \quad &\text{if $i$ is even}.
\end{cases}
\end{equation}
This operator is injective and compact (for details see also~\cite{engl:1996}). In our notation, $\trimhatinf = \{e^i\}_{i \in \N}$ (since the inputs are already orthonormalised) and 
\begin{equation*}
\orthim y^i = A \orthim u^i = 
\begin{cases}
\sum_{j=1}^\infty j^{-1} e^j, \quad & i = 1, \\
a_i e^i, \quad  & i \geq 2.
\end{cases}
\end{equation*}
Or in other words our training pairs are $(e^i=\orthim u^i,\orthim y^i)_{i \in \N}$.

For every $n \geq 1$ and every fixed $i \geq n+1$, 
we expand $P_{\Y_n} \orthim y^i \in \Y_n$ as follows
\begin{equation}\label{eq:exp_proj_Seidman}
P_{\Y_n} \orthim y^i = \sum_{k=1}^n \gamma_k \orthim y^k = \gamma_1 \sum_{j=1}^\infty j^{-1} e^j + \sum_{k=2}^n \gamma_k a_k e^k.
\end{equation} 
For all $k=2,...,n$  and $i \geq n+1$  we  therefore  have that
\begin{equation*}
(P_{\Y_n} \orthim y^i, e^k) = (\orthim y^i, P_{\Y_n} e^k) = (\orthim y^i, e^k) = a_i(e^i,e^k) = 0,
\end{equation*}
hence, taking a scalar product with $e^k$ in~\eqref{eq:exp_proj_Seidman}, we conclude that $\gamma_1 k^{-1} + \gamma_k a_k = 0$ and
\begin{equation}	\label{eq:gamma}
\gamma_k = -\gamma_1 \frac{k^{-1}}{a_k}.
\end{equation}
Now, to compute $\gamma_1$, we note that it minimises the following expression for $i \geq n+1$
\begin{eqnarray*}
\norm{P_{\Y_n} \orthim y^i - \orthim y^i}^2 &=& \norm{\gamma_1 \sum_{j=1}^\infty j^{-1} e^j + \sum_{k=2}^n \gamma_k a_k e^k - a_ie^i}^2 \\
&=& \norm{\gamma_1 e^1 + \sum_{k=2}^n (\gamma_1 k^{-1} + \gamma_k a_k)e^k + \gamma_1 \sum_{j=n+1}^\infty j^{-1} e^j - a_ie^i}^2 \\
&\underbrace{=}_{\eqref{eq:gamma}}& \norm{\gamma_1 e^1 + \gamma_1 \sum_{\substack{j=n+1 \\ j \neq i}}^\infty j^{-1} e^j + (\gamma_1 i^{-1} - a_i)e^i }^2 \\
&=& \gamma_1^2 + \gamma_1^2 \sum_{\substack{j=n+1 \\ j \neq i}}^\infty j^{-2} + (\gamma_1i^{-1}-a_i)^2 \\
&=& \gamma_1^2 (1 + \sum_{j=n+1}^\infty j^{-2}) - 2 \gamma_1 i^{-1}a_i + a_i^2.
\end{eqnarray*}
The minimiser of this quadratic expression  with respect to $\gamma_1$  is given by \begin{equation}\label{eq:gamma_1}
\gamma_1 = \frac{i^{-1}a_i}{1+\sum_{j=n+1}^\infty j^{-2}} \defeq C_n i^{-1}a_i,
\end{equation}
where $0 < \frac{1}{1+ \pi^2/6} \leq C_n \defeq 
\frac{1}{1+\sum_{j=n+1}^\infty j^{-2}} \leq 1$ is uniformly bounded from below and above with respect to $n$.

Hence we get from \eqref{eq:gamma} that
\begin{equation}\label{eq:gamma_k}
\gamma_k = -\gamma_1 \frac{k^{-1}}{a_k} = -C_n i^{-1}a_i  \frac{k^{-1}}{a_k}.
\end{equation}
Since $0 < a_i \leq i^{-1}$ for all $i \in \N$ and $\frac{k^{-1}}{a_k} \leq k^{3/2}$ for all $k \in \N$ (see \eqref{eq:ab}), from~\eqref{eq:gamma_1} and~\eqref{eq:gamma_k} we find
\begin{equation*}
	\begin{aligned}
	|\gamma_1|&\leq C_n i^{-2}\leq C_n i^{-\frac{1}{2}},\\
	|\gamma_k|&\leq C_n i^{-2}k^{\frac{3}{2}}\leq C_n i^{-\frac{1}{2}},\qquad \textrm{for}\,\, k=2,\dots,n.
	\end{aligned}
\end{equation*}
Therefore,
\begin{equation*}
\sum_{k=1}^n \gamma_k^2 \leq n C_n^2 i^{-1} \leq C_n^2 \leq 1 \text{ for all } i \geq n+1 
\end{equation*}
and  Assumption~\ref{ass:l2_coefs_proj} is satisfied. 

The nonconvergence example in~\cite{Seidman:1980} is obtained with the following exact solution
\begin{equation*}
u^\dagger = \sum_{i=1}^\infty i^{-1} e^i.
\end{equation*}
Clearly, the expansion coefficients are in $\ell^2$, hence $u^\dagger$ is well defined. However, Assumption~\ref{ass:l1_coefs_gt} fails since 
\begin{equation*}
\sum_{i=1}^\infty i^{-1} = \infty.
\end{equation*}
\end{example}

Therefore, the key to the  nonconvergence example is the slow decay of the expansion coefficients in~\eqref{eq:exp_u_dagger} and hence a violation of Assumption~\ref{ass:l1_coefs_gt}.
This assumption is indeed satisfied in our numerical experiments with natural images (photographs of people) in Section~\ref{sec:numerics}. 

Numerically validating Assumption~\ref{ass:l2_coefs_proj} can be complicated due to numerical errors. Our numerical experiments in Section~\ref{sec:numerics} do not give a definitive answer. However, as the above example demonstrates, this assumption can be satisfied for data generated by a compact operator.

Important for the analysis of regularisation methods are source conditions see \cite{Groetsch:1984} and some references such as \cite{BoeHofTauYam06,FleHofMat11,AndElbHooQiuSch15},
indeed the $l^1$-condition from Assumption~\ref{ass:l1_coefs_gt} is related to the source condition as the following remark shows.
\begin{remark}
In Seidman's example, the $l^1$-condition from Assumption~\ref{ass:l1_coefs_gt} follows from a source condition
\begin{equation}\label{eq:Seidman_sc}
u^\dagger \in \range{A^*}.
\end{equation}
Indeed, suppose that $u = A^*v$ for some $v \in \U$. We get
\begin{eqnarray*}
\sum_{i=1}^\infty \abs{(u,e^i)} &=& \sum_{i=1}^\infty \abs{(A^*v,e^i)} = \sum_{i=1}^\infty \abs{(v,Ae^i)} = \abs{(v,Ae^1)} + \sum_{i=2}^\infty \abs{(v,a_i e^i)} \\ &=& \abs{(v,\orthim y^1)} + \sum_{i=2}^\infty a_i\abs{(v, e^i)}
\leq  \norm{v}\norm{\orthim y^1} + \norm{v} \sqrt{\sum_{i=2}^\infty a_i^2}.
\end{eqnarray*}
Since $a_i \leq i^{-1}$ for all $i \in \N$, we get that
\begin{eqnarray*}
\sum_{i=1}^\infty \abs{(u,e^i)} \leq \norm{v}\norm{\orthim y^1} + \norm{v} \sqrt{\sum_{i=2}^\infty i^{-2}} < \infty.
\end{eqnarray*}

Therefore, for Seidman's nonconvergence example $u^{\dagger}$ violates this source condition, and 
	in particular this means that for obtaining a nonconvergent sequence of regularised solutions $\udn$,  roughness of the solution $u^\dagger$ (meaning not satisfying a source condition) is required. 
\end{remark}
Concluding our discussion of Seidman's nonconvergence example, we recall that the
 nonconvergence of the regularised solution $\udn$ would also not hold if we project onto eigenspaces of the operator $A$ (cf. Remark~\ref{rem:svd}). This is independent of a smoothness assumption on $u^\dagger$.
\subsubsection{Strong convergence}
\rb{Thanks to the assumptions on the operator $A$ and training pairs, expanding the exact data $y \in \range{A}$ of~\eqref{eq:Au=y}  in the
basis of the orthonormalised training outputs $\trdatab$ and formally applying  $A^{-1}$, we obtain the following formal expansion
\begin{equation}\label{eq:formal_series_u_dagger}
u^\dagger = \sum_{i=1}^{\infty}(y, \orthdata y^i) \orthdata u^i,
\end{equation}
where $\trimb$ are the transformed inputs satisfying $A \orthdata u^i = \orthdata y^i$, $i=1,...,n$ (cf. Section~\ref{sec:rec_formula}). 
The residual $u^\dagger - \udn$ is given by
\begin{equation*}
u^\dagger - \udn = \sum_{i=n+1}^{\infty}(y, \orthdata y^i) \orthdata u^i.
\end{equation*}
A sufficient condition for convergence of this residual is absolute convergence of the series~\eqref{eq:formal_series_u_dagger}: }
\begin{equation}\label{eq:abs_conv}
\sum_{i=1}^{\infty}\abs{(y, \orthdata y^i)} \norm{\orthdata u^i} < \infty.
\end{equation}
By Remark~\ref{rem:norm_ubar} we have a lower bound for $\norm{\orthdata u^i}$
\begin{equation}\label{eq:est_norm_ubar_lower}
\norm{\orthdata u^i} = \frac{\norm{A^{-1}(y^i - P_{\Y_{i-1}} y^i)}}{\norm{y^i - P_{\Y_{i-1}} y^i}} \geq \frac{\norm{u^i - P_{\U_{i-1}}u^i}}{\norm{y^i - P_{\Y_{i-1}} y^i}}, \qquad \forall i \geq 2.
\end{equation}
Hence it is clear that for this series to converge, the coefficients $(y, \orthdata y^i)$ would have to decay fast. To understand how fast they need to decay, we need an upper bound on the norm of $\orthdata u^i$. The following result shows that under Assumption~\ref{ass:l2_coefs_proj} the estimate~\eqref{eq:est_norm_ubar_lower} is actually sharp up to a constant factor.
\begin{proposition}\label{prop:est_norm_ubar_upper}
Let Assumption~\ref{ass:l2_coefs_proj} be satisfied.
Then the following estimate holds

\begin{equation*}
\norm{\orthdata u^i} \leq \sqrt{C+1}\frac{\norm{u^i - P_{\U_{i-1}} u^i}}{\norm{y^i - P_{\Y_{i-1}} y^i}}, \qquad \forall\ i \geq 2,  
\end{equation*}
where $C$ is the constant from Assumption~\ref{ass:l2_coefs_proj}.
\end{proposition}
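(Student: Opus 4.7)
The plan is to express both the numerator and the denominator of $\norm{\orthdata u^i}$ in the orthonormal basis $\trimhat$ (the one orthonormal in $\U$), so that Assumption~\ref{ass:l2_coefs_proj} can be applied directly. The starting point is Remark~\ref{rem:norm_ubar}, which gives
\begin{equation*}
\norm{\orthdata u^i} \;=\; \frac{\norm{A^{-1}(y^i - P_{\Y_{i-1}} y^i)}}{\norm{y^i - P_{\Y_{i-1}} y^i}},
\end{equation*}
so it suffices to bound the numerator $\norm{A^{-1}(y^i - P_{\Y_{i-1}} y^i)}$ by $\sqrt{C+1}\,\norm{u^i - P_{\U_{i-1}} u^i}$.

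First I would expand $u^i$ in the orthonormal basis $\orthim u^1,\dots,\orthim u^i$ of $\U_i$, which gives $u^i = \sum_{j=1}^{i}(u^i,\orthim u^j)\,\orthim u^j$, and accordingly $y^i = A u^i = \sum_{j=1}^{i}(u^i,\orthim u^j)\,\orthim y^j$. The crucial observation is that the terms with $j\leq i-1$ already lie in $\Y_{i-1}$, so they are fixed by $P_{\Y_{i-1}}$, and therefore the projection residual collapses to
\begin{equation*}
y^i - P_{\Y_{i-1}} y^i \;=\; (u^i,\orthim u^i)\bigl(\orthim y^i - P_{\Y_{i-1}}\orthim y^i\bigr).
\end{equation*}
Using Gram--Schmidt in $\U$, one has $u^i - P_{\U_{i-1}}u^i = (u^i,\orthim u^i)\,\orthim u^i$, and hence $|(u^i,\orthim u^i)| = \norm{u^i - P_{\U_{i-1}}u^i}$.

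Next I would apply $A^{-1}$ and use the expansion from Assumption~\ref{ass:l2_coefs_proj}, namely $P_{\Y_{i-1}}\orthim y^i = \sum_{k=1}^{i-1}\beta_k^{i,i-1}\,\orthim y^k$ (which is the legitimate case $i\geq(i-1)+1$). Since $A^{-1}\orthim y^k = \orthim u^k$ and $\{\orthim u^k\}$ is orthonormal in $\U$, this yields
\begin{equation*}
\norm{A^{-1}\bigl(\orthim y^i - P_{\Y_{i-1}}\orthim y^i\bigr)}^2 \;=\; \bigl\|\orthim u^i - \sum_{k=1}^{i-1}\beta_k^{i,i-1}\orthim u^k\bigr\|^2 \;=\; 1 + \sum_{k=1}^{i-1}(\beta_k^{i,i-1})^2 \;\leq\; 1 + C.
\end{equation*}
Combining these ingredients, $\norm{A^{-1}(y^i-P_{\Y_{i-1}}y^i)} \leq \sqrt{C+1}\,\norm{u^i - P_{\U_{i-1}}u^i}$, and dividing by $\norm{y^i - P_{\Y_{i-1}}y^i}$ gives the claim.

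The only non-routine step is the collapse identity in the second paragraph: the whole argument rests on noticing that, when expressed in the $\orthim$-basis, the projection residual in $\Y$ is driven entirely by the single coordinate $(u^i,\orthim u^i)$, which in turn equals $\pm\norm{u^i-P_{\U_{i-1}}u^i}$ by Gram--Schmidt. Once this bridge between the two orthogonalisations is in place, Assumption~\ref{ass:l2_coefs_proj} does the rest.
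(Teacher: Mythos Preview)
Your proposal is correct and follows essentially the same route as the paper: both establish the identity $y^i - P_{\Y_{i-1}} y^i = (u^i,\orthim u^i)\bigl(\orthim y^i - P_{\Y_{i-1}}\orthim y^i\bigr)$ (you via the basis expansion of $u^i$, the paper by plugging in the Gram--Schmidt formula for $\orthim y^i$ directly), then apply Assumption~\ref{ass:l2_coefs_proj} with $n=i-1$ and use orthonormality of $\{\orthim u^j\}$ to get the bound $1+C$.
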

\begin{proof}
First we observe that for all $i \geq 2$ the following identity holds
\begin{equation}\label{eq:proj_yn_ynhat}
y^i - P_{\Y_{i-1}} y^i = (\orthim y^i - P_{\Y_{i-1}} \orthim y^i)\norm{u^i - P_{\U_{i-1}} u^i},
\end{equation}
where $\trpairs$ are the training pairs from~\eqref{eq:pairs} and $\trpairshat$ are the transformed training pairs such that $\trimhat$ are orthonormal and $\trdatahat$ satisfy $A \orthim u^i = \orthim y^i$ (cf. Section~\ref{sec:gs_im}).  Indeed, the Gram-Schmidt process yields that for all $i \geq 2$
\begin{equation*}
\orthim u^i = \frac{u^i - P_{\U_{i-1}} u^i}{\norm{u^i - P_{\U_{i-1}} u^i}} \quad \text{and} \quad \orthim y^i = \frac{1}{\norm{u^i - P_{\U_{i-1}} u^i}}A(u^i - P_{\U_{i-1}} u^i).
\end{equation*}
Hence, we get
\begin{eqnarray*}
(\orthim y^i - P_{\Y_{i-1}} \orthim y^i)\norm{u^i - P_{\U_{i-1}} u^i} &=& A(u^i - P_{\U_{i-1}} u^i) - P_{\Y_{i-1}}A(u^i - P_{\U_{i-1}} u^i) \\
&=& y^i - P_{\Y_{i-1}}y^i - AP_{\U_{i-1}} u^i + P_{\Y_{i-1}}AP_{\U_{i-1}} u^i \\
&=& y^i - P_{\Y_{i-1}}y^i - AP_{\U_{i-1}} u^i + AP_{\U_{i-1}} u^i = y^i - P_{\Y_{i-1}}y^i,
\end{eqnarray*}
where we used the obvious identity $P_{\Y_{i-1}}AP_{\U_{i-1}} = AP_{\U_{i-1}}$. Now, using the expansion in Assumption~\ref{ass:l2_coefs_proj} with $n=i-1$, we get
\begin{equation*}
P_{\Y_{i-1}} \orthim y^i = \sum_{j=1}^{i-1} \beta_j^{i,i-1} \orthim y^j,
\end{equation*}
and hence
\begin{equation*}
A^{-1}(\orthim y^i - P_{\Y_{i-1}} \orthim y^i) = \orthim u^i - \sum_{j=1}^{i-1} \beta_j^{i,i-1} \orthim u^j.
\end{equation*}
Since $\trimhat$ are orthonormal, we get
\begin{equation*}
\norm{A^{-1}(\orthim y^i - P_{\Y_{i-1}} \orthim y^i)}^2 = 1 + \sum_{j=1}^{i-1} (\beta_j^{i,i-1})^2 \leq C+1,
\end{equation*}
where the upper bound follows from Assumption~\ref{ass:l2_coefs_proj}. Using~\eqref{eq:proj_yn_ynhat}, we get
\begin{equation*}
\norm{A^{-1}(y^i - P_{\Y_{i-1}} y^i)} = \norm{A^{-1}(\orthim y^i - P_{\Y_{i-1}} \orthim y^i)}\norm{u^i - P_{\U_{i-1}} u^i} \leq \sqrt{C+1}\norm{u^i - P_{\U_{i-1}} u^i}.
\end{equation*}
Finally, we obtain the desired estimate
\begin{equation*}
\norm{\orthdata u^i} = \frac{\norm{A^{-1}(y^i - P_{\Y_{i-1}} y^i)}}{\norm{y^i - P_{\Y_{i-1}} y^i}} \leq \sqrt{C+1}\frac{\norm{u^i - P_{\U_{i-1}} u^i}}{\norm{y^i - P_{\Y_{i-1}} y^i}}.
\end{equation*} 
\end{proof}

Hence, the sufficient condition~\eqref{eq:abs_conv} 
together with \eqref{eq:proj}
can be written as follows 
\begin{equation}\label{eq:abs_conv2}
\sum_{i=1}^\infty \abs{(y, \orthdata y^i)}\frac{\norm{u^i - P_{\U_{i-1}} u^i}}{\norm{y^i - P_{\Y_{i-1}} y^i}} = \sum_{i=1}^\infty \frac{\abs{(y, \orthdata y^i)}}{\abs{(y^i, \orthdata y^i)}}\abs{(u^i, \orthim u^i)} < \infty,
\end{equation}
where $\trdatab$ are the orthonormalised outputs (cf. Section~\ref{sec:rec_formula}) and $\trimhat$ are the orthonormalised inputs (cf. Section~\ref{sec:gs_im}). Convergence of this series can be ensured either by sufficiently fast decay of the coefficients $(y, \orthdata y^i)$ of the expansion of the exact $y \in \range{A}$ in the orthonormal basis $\trdatabinf$ or by sufficiently fast decay of the scalar products $(u^i, \orthim u^i)$. Note that in $(u^i, \orthim u^i)$,  the index $i$ is present at both positions in the scalar product and $(u^i, \orthim u^i)$ are not expansion coefficients of any particular element of $\U$ in the basis $\trimhatinf$. Rather, $\abs{(u^i, \orthim u^i)} = \norm{u^i - P_{\U_{i-1}} u^i}$ is the part of the training input $u^i$ that lies outside the span of the previous training vectors $\{u^i\}_{i=1,...,n-1}$. \\
\indent For real data where the training inputs $\trim$ and the exact solution $u^\dagger$ are similar, it is likely that $\abs{(y, \orthdata y^i)}$ and $\abs{(y^i, \orthdata y^i)}$ will be of similar order, hence $\frac{\abs{(y, \orthdata y^i)}}{\abs{(y^i, \orthdata y^i)}}$ is unlikely to decay fast. Decay of the scalar products $(u^i, \orthim u^i)$, however, can be controlled by choosing sufficiently similar training inputs $\trim$. Hence, we obtain the following sufficient condition.
\begin{theorem}\label{thm:strong}
Let $y \in \range{A}$ be the exact right-hand side in~\eqref{eq:Au=y}. Suppose that
\begin{enumerate}[label=(\roman*)]
\item \label{thm:strong_i} the expansion coefficients $(y, \orthdata y^i)$ of $y \in \range{A}$ in the orthonormal basis $\trdatabinf$ satisfy
\begin{equation}\label{ass:strong1}
\sup_{i=1,...,\infty} \frac{\abs{(y, \orthdata y^i)}}{\abs{(y^i, \orthdata y^i)}} \leq C < \infty,
\end{equation}
where $C$ is a constant independent of $n$ and $\trdatabinf$ are the orthonormalised outputs,
\item \label{thm:strong_ii}  the training inputs $\triminf$ satisfy
\begin{equation}\label{ass:strong2}
\sum_{i=1}^\infty \abs{(u^i, \orthim u^i)} < \infty,
\end{equation}
where $\trimhatinf$ are the orthonormalised inputs,
\item Assumption~\ref{ass:l2_coefs_proj} is satisfied.
\end{enumerate} 
Then the solution obtained with regularisation by projection~\eqref{eq:udn2} converges strongly to the exact solution of~\eqref{eq:Au=y}, i.e.
\begin{equation*}
\udn \to u^\dagger.
\end{equation*}
\end{theorem}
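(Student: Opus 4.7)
The plan is to verify the absolute convergence condition~\eqref{eq:abs_conv} under the three hypotheses, which forces the partial sums $\udn = \sum_{i=1}^n (y, \orthdata y^i) \orthdata u^i$ from~\eqref{eq:udn2} to be Cauchy in $\U$, and then to identify their strong limit as $u^\dagger$.

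First, I would combine Proposition~\ref{prop:est_norm_ubar_upper} (available thanks to hypothesis~(iii)) with the identity~\eqref{eq:proj} to rewrite the generic term of~\eqref{eq:abs_conv} as
\begin{equation*}
\abs{(y, \orthdata y^i)} \norm{\orthdata u^i} \leq \sqrt{C+1}\,\abs{(y, \orthdata y^i)} \frac{\norm{u^i - P_{\U_{i-1}} u^i}}{\norm{y^i - P_{\Y_{i-1}} y^i}} = \sqrt{C+1}\,\frac{\abs{(y, \orthdata y^i)}}{\abs{(y^i, \orthdata y^i)}}\,\abs{(u^i, \orthim u^i)},
\end{equation*}
where I use $\norm{u^i - P_{\U_{i-1}} u^i} = \abs{(u^i, \orthim u^i)}$ from the Gram-Schmidt construction in Section~\ref{sec:gs_im} and $\norm{y^i - P_{\Y_{i-1}} y^i} = \abs{(y^i, \orthdata y^i)}$ from~\eqref{eq:proj}. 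Hypothesis~\ref{thm:strong_i} bounds the ratio uniformly by a constant, and hypothesis~\ref{thm:strong_ii} provides $\sum_{i=1}^\infty \abs{(u^i, \orthim u^i)} < \infty$. Multiplying and summing, condition~\eqref{eq:abs_conv} holds, so the series $\sum_{i=1}^\infty (y, \orthdata y^i) \orthdata u^i$ converges absolutely in $\U$ and its partial sums $\udn$ converge strongly to some $u^\ast \in \U$.

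To identify $u^\ast$ with $u^\dagger$, I apply the bounded operator $A$ to obtain $A\udn \to A u^\ast$ strongly in $\Y$. By construction (Theorem~\ref{thm:pseudoinv_image}), $A\udn = A A^{-1} P_{\Y_n} y = P_{\Y_n} y$, and Proposition~\ref{pr:dense_Y} guarantees that $\{\orthdata y^i\}_{i \in \N}$ is an orthonormal basis of $\cl{\range{A}}$, so $P_{\Y_n} y \to y$ strongly. Hence $A u^\ast = y = A u^\dagger$, and injectivity of $A$ yields $u^\ast = u^\dagger$.

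The main work has already been done in Proposition~\ref{prop:est_norm_ubar_upper}; the strong convergence theorem itself is essentially a bookkeeping exercise in which hypothesis~\ref{thm:strong_ii} plays the role classically reserved for a source condition (controlling how fast genuinely new information is contributed by each training input), hypothesis~\ref{thm:strong_i} prevents the coordinates of $y$ from being anomalously large compared to those of the training outputs, and Assumption~\ref{ass:l2_coefs_proj} ensures geometric compatibility of the two Gram-Schmidt bases. The only subtlety I would flag is not to confuse the two different orthonormalisations $\trpairsb$ and $\trpairshat$ when manipulating the bound, but with the identifications above this is direct.
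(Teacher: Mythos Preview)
Your proof is correct and follows essentially the same route as the paper: both arguments verify the absolute convergence condition~\eqref{eq:abs_conv} by combining Proposition~\ref{prop:est_norm_ubar_upper} with the identities $\norm{u^i-P_{\U_{i-1}}u^i}=\abs{(u^i,\orthim u^i)}$ and~\eqref{eq:proj}, and then use hypotheses~\ref{thm:strong_i} and~\ref{thm:strong_ii} to bound the resulting sum via H\"older's inequality. Your explicit identification of the limit (applying $A$ and invoking injectivity) makes rigorous what the paper leaves implicit in its ``formal expansion''~\eqref{eq:formal_series_u_dagger}, but the substance is the same.
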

\begin{proof}
Using H\"older's inequity in~\eqref{eq:abs_conv2}, we get that
\begin{equation*}
\sum_{i=1}^\infty \frac{\abs{(y, \orthdata y^i)}}{\abs{(y^i, \orthdata y^i)}}\abs{(u^i, \orthim u^i)} \leq \left(\sup_{i=1,...,\infty} \frac{\abs{(y, \orthdata y^i)}}{\abs{(y^i, \orthdata y^i)}} \right) \sum_{i=1}^\infty \abs{(u^i, \orthim u^i)} < \infty.
\end{equation*}
This implies~\eqref{eq:abs_conv}, which is sufficient for strong convergence of the series~\eqref{eq:udn2}.
\end{proof}


\begin{remark}
We notice the similarity between the condition~\rb{\eqref{ass:strong2}} and Assumption~\ref{ass:l1_coefs_gt}. Both require $\ell^1$ convergence of certain series; Assumption~\ref{ass:l1_coefs_gt} requires it for $(u^\dagger, \orthim u^i)$, where $u^\dagger$ is the exact solution of~\eqref{eq:Au=y}, whilst~\rb{\eqref{ass:strong2}} requires $\ell^1$ convergence of $(u^i, \orthim u^i)$. If the exact solution $u^\dagger$ and the training inputs $\triminf$ are similar, both conditions are likely to be satisfied or violated simultaneously. Both conditions are satisfied in our numerical experiments with natural images (photographs) in Section~\ref{sec:numerics}. Condition~\rb{\eqref{ass:strong2}} seems also to be satisfied in these experiments. 
\end{remark}

\subsection{Noisy data}\label{sec:direct_noisy}
Even if conditions of Theorem~\ref{thm:strong} are satisfied and we have convergence for clean data $y \in \range{A}$, for noisy data $y^\delta \not\in \range{A}$ convergence may fail.  \bb{Noise in the data $y^\delta$ is understood as a deterministic perturbation $\Delta \not \in \range{A}$ such that $\norm{\Delta} \leq \delta$ and $y_\delta = y + \Delta$.} Define
\begin{equation}\label{eq:udn_delta}
\udndelta \defeq A^{-1}P_{\Y_n} y^\delta. 
\end{equation}
Since $P_{\Y_n} y^\delta \in \range{A}$ for all $n$, this is well defined. Now we get that
\begin{equation*}
u^\dagger - \udndelta = A^{-1} y - A^{-1}P_{\Y_n}(y+\Delta) = A^{-1} (I-P_{\Y_n}) A u^\dagger - A^{-1} P_{\Y_n} \Delta
\end{equation*}
and therefore
\begin{equation}\label{eq:semiconv}
\norm{u^\dagger - \udndelta} \leq  \norm{A^{-1} (I-P_{\Y_n}) A u^\dagger} +  \norm{A^{-1} P_{\Y_n} \Delta}.
\end{equation}
\bb{We  observe the typical \emph{semi-convergence} behaviour.} While the first term converges to zero as $n \to \infty$ under assumptions of Theorem~\ref{thm:conv_lsq}, the second term clearly explodes. The amount of training pairs $n$ therefore plays the role of a regularisation parameter that balances the influence of the two error terms in~\eqref{eq:semiconv}.

\begin{theorem}\label{thm:lsq_par_choice}
Let $n = n(\delta)$ be such that $n(\delta) \to \infty$ as $\delta \to 0$ and
\begin{equation*}
\delta \sqrt{n} \sup_{i=1,...,n} \frac{1}{\abs{(y^i, \orthdata y^i)}} \to 0 \quad \text{as $\delta \to 0$.}
\end{equation*}
Then~\eqref{eq:udn_delta} \bb{together with the function $n(\delta)$ (which we refer to as an a priori parameter choice rule~\cite{engl:1996})} defines a convergent regularisation, i.e.
\begin{equation*}
\udndeltan \to u^\dagger \quad \text{as $\delta \to 0$.}
\end{equation*}
\end{theorem}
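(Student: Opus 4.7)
The plan is to combine the semi-convergence decomposition already flagged in \eqref{eq:semiconv} with an explicit estimate of the noise propagation term that exactly matches the parameter choice rule in the statement. Writing
\begin{equation*}
\norm{u^\dagger - \udndeltan} \leq \norm{A^{-1}(I-P_{\Y_{n(\delta)}})Au^\dagger} + \norm{A^{-1} P_{\Y_{n(\delta)}} \Delta},
\end{equation*}
we need each term on the right to vanish as $\delta \to 0$. The first summand is precisely $\norm{u^\dagger - u^\U_{n(\delta)}}$ on clean data, and since $n(\delta) \to \infty$, it goes to zero under the strong convergence established in Theorem~\ref{thm:strong} (alternatively, under the assumptions of Theorem~\ref{thm:conv_lsq} together with any convergence hypothesis guaranteeing $\udn \to u^\dagger$).

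The main work goes into the second, noise-propagation term. The idea is to expand $P_{\Y_n}\Delta$ in the orthonormal basis $\trdatab$ and use $A\orthdata u^i = \orthdata y^i$ to obtain
\begin{equation*}
A^{-1}P_{\Y_n}\Delta = \mysum{i}{1}{n} (\Delta, \orthdata y^i)\,\orthdata u^i.
\end{equation*}
Applying the triangle inequality and then the Cauchy--Schwarz inequality in the form
\begin{equation*}
\adaptnorm{\mysum{i}{1}{n}(\Delta,\orthdata y^i)\orthdata u^i} \leq \sqrt{\mysum{i}{1}{n}\abs{(\Delta,\orthdata y^i)}^2}\cdot\sqrt{\mysum{i}{1}{n}\norm{\orthdata u^i}^2},
\end{equation*}
the first factor is bounded by $\norm{P_{\Y_n}\Delta}\leq\norm{\Delta}\leq\delta$ (since $\orthdata y^i$ are orthonormal in $\Y_n$), and the second factor is at most $\sqrt{n}\sup_{i=1,\dots,n}\norm{\orthdata u^i}$.

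The step that really uses the earlier machinery is bounding $\sup_i \norm{\orthdata u^i}$. I would invoke Proposition~\ref{prop:est_norm_ubar_upper}, which under Assumption~\ref{ass:l2_coefs_proj} gives
\begin{equation*}
\norm{\orthdata u^i} \leq \sqrt{C+1}\,\frac{\norm{u^i - P_{\U_{i-1}}u^i}}{\norm{y^i - P_{\Y_{i-1}}y^i}}\leq \frac{\sqrt{C+1}}{\abs{(y^i,\orthdata y^i)}},
\end{equation*}
where in the last step I use the normalisation $\norm{u^i}=1$ from Assumption~\ref{ass_1} together with \eqref{eq:proj}. Combining the estimates yields
\begin{equation*}
\norm{A^{-1}P_{\Y_n}\Delta} \leq \sqrt{C+1}\,\delta\sqrt{n}\sup_{i=1,\dots,n}\frac{1}{\abs{(y^i,\orthdata y^i)}},
\end{equation*}
which tends to zero by the hypothesised parameter choice rule. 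Adding the two vanishing terms concludes the proof.

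The main obstacle is the second factor in Cauchy--Schwarz: without an upper bound on $\norm{\orthdata u^i}$ the recipe only yields a lower-bound estimate from Remark~\ref{rem:norm_ubar}. That is exactly the gap that Proposition~\ref{prop:est_norm_ubar_upper} (and hence Assumption~\ref{ass:l2_coefs_proj}) is designed to close; the uniform boundedness $\norm{u^i}=1$ is then what turns the data-driven ratio $\norm{u^i-P_{\U_{i-1}}u^i}/\abs{(y^i,\orthdata y^i)}$ into the clean expression $1/\abs{(y^i,\orthdata y^i)}$ that appears in the parameter choice rule.
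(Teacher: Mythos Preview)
Your argument is correct, but it takes a different route from the paper in the key step of bounding the noise-propagation term, and the difference is worth noting. Both you and the paper start from the same decomposition~\eqref{eq:semiconv} and both arrive at
\begin{equation*}
\norm{A^{-1}P_{\Y_n}\Delta} \leq \delta\sqrt{\mysum{i}{1}{n}\norm{\orthdata u^i}^2}.
\end{equation*}
You then control $\sup_i \norm{\orthdata u^i}$ via Proposition~\ref{prop:est_norm_ubar_upper}, which costs you Assumption~\ref{ass:l2_coefs_proj}. The paper instead observes that $\sqrt{\sum_i \norm{\orthdata u^i}^2}$ is exactly $\norm{\orthdata U_n}_2$ and uses the factorisation $\orthdata U_n = U_n R_n^{-1}$ from~\eqref{eq:Y_bar_and_U_bar} to get $\norm{\orthdata U_n}_2 \leq \norm{U_n}_2 \norm{R_n^{-1}}_2$; since $R_n$ is upper triangular with diagonal entries $(y^i,\orthdata y^i)$, and $\norm{U_n}_2 = \sqrt{n}$ by the normalisation $\norm{u^i}=1$, the bound $\sqrt{n}\sup_i 1/\abs{(y^i,\orthdata y^i)}$ drops out directly from the Gram--Schmidt structure without invoking Assumption~\ref{ass:l2_coefs_proj}.

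So your proof buys a conceptually clean link to the earlier machinery (Proposition~\ref{prop:est_norm_ubar_upper}) at the price of an extra hypothesis, while the paper's matrix-factorisation argument is more self-contained and keeps the theorem free of Assumption~\ref{ass:l2_coefs_proj}. Both are valid; the paper's version is sharper in terms of hypotheses needed. Regarding the approximation term, you are right that some strong-convergence hypothesis (such as that of Theorem~\ref{thm:strong}) is implicitly required for the first summand to vanish; the paper is equally informal on this point, simply referring back to~\eqref{eq:semiconv}.
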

\begin{proof}
All we need to do is estimate the growth of $\norm{A^{-1} P_{\Y_n}}$ as $n \to \infty$. For an arbitrary $z \in \Y$ with $\norm{z} \leq 1$ we expand $P_{\Y_n}z \in \Y_n$ in the orthonormal basis $\trdatab$ of $\Y_n$ (see Section~\ref{sec:rec_formula}) and get
\begin{eqnarray*}
\norm{A^{-1}P_{\Y_n}z} &= &\norm{A^{-1}\sum_{i=1}^n (z,\orthdata y^i) \orthdata y^i} = \norm{\sum_{i=1}^n (z,\orthdata y^i) \orthdata u^i} \leq \sqrt{\sum_{i=1}^n (z,\orthdata y^i)^2} \sqrt{\sum_{i=1}^n \norm{\orthdata u^i}^2} \leq \norm{\orthdata U_n}_2,
\end{eqnarray*}
where $\orthdata U_n$ is \bb{composed of the functions} $\trimb$ as defined in~\eqref{eq:Y_bar}. Using~\eqref{eq:Y_bar_and_U_bar}, we further estimate 
\begin{equation*}
\norm{\orthdata U_n}_2 = \norm{U_n R_n^{-1}}_2 \leq \norm{U_n}_2 \norm{R_n^{-1}}_2,
\end{equation*}
where $U_n$ is \bb{composed of the functions} $\trim$ as defined in~\eqref{eq:Y_bar} and $R_n$ is the upper triangular $n \times n$ transformation matrix defined in~\eqref{eq:R_n}. Now, since $R_n$ is upper triangular, its eigenvalues are given by its diagonal entries and $\norm{R_n^{-1}}_2$ is the inverse of the smallest one, i.e.
\begin{equation*}
\norm{R_n^{-1}}_2 = \sup_{i=1,...,n} \frac{1}{\abs{(y^i, \orthdata y^i)}}.
\end{equation*}
We further note that
\begin{equation*}
\norm{U_n}_2 = \sqrt{\sum_{i=1}^n \norm{u^i}^2} = \sqrt{n}
\end{equation*}
by Assumption~\ref{ass_1} and finally we obtain
\begin{eqnarray*}
\norm{A^{-1}P_{\Y_n}z} \leq \sqrt{n} \sup_{i=1,...,n} \frac{1}{\abs{(y^i, \orthdata y^i)}}.
\end{eqnarray*}
The rest follows from~\eqref{eq:semiconv}.
\end{proof}

\section{Dual Least Squares}\label{sec:dual_lsq}
Although projecting the equation~\eqref{eq:Au=y} in   $\U$  as in~\eqref{AP_n u = y} does not yield a convergent solution in general, it is known that projecting~\eqref{eq:Au=y} in  $\Y$ yields convergent solutions. This method is also referred to as \emph{dual least squares}~\cite{engl:1996}.

The dual least squares method consists of finding the minimum norm solution (i.e. least squares solution with minimal norm) of the following problem
\begin{equation}\label{eq:Q_nAu=Q_ny}
P_{\Y_n} A u = P_{\Y_n} y,
\end{equation}
where $P_{\Y_n}$ is the orthogonal projector onto the span of the outputs $\trdata$ and $y \in \range{A}$ is the exact right-hand side in~\eqref{eq:Au=y}. We denote the minimum norm solution of~\eqref{eq:Q_nAu=Q_ny} by $\vdn$, where the superscript $^\Y$ emphasises the fact that the projection in~\eqref{eq:Q_nAu=Q_ny} takes place in $\Y$ (cf. Section~\ref{sec:reg_proj}).

The following result shows that $\vdn$ converges strongly to the exact solution $u^\dagger$ as $n \to \infty$.
\begin{theorem}[{\cite[Thm. 3.24]{engl:1996}}] \label{thm:engl_dual}
Let $y \in \range{A}$ be the exact data in~\eqref{eq:Au=y}. Then the minimum norm solution of~\eqref{eq:Q_nAu=Q_ny} is given by 
\begin{equation}\label{eq:vdn1}
\vdn = P_{A^*\Y_n} u^\dagger,
\end{equation} 
where $P_{A^*\Y_n}$ is the orthogonal projector onto $A^* \Y_n$. Consequently,
\begin{equation*}
\vdn \to u^\dagger \quad \text{as $n \to \infty$}.
\end{equation*}
\end{theorem}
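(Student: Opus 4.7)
The plan is to verify the formula $\vdn = P_{A^*\Y_n}u^\dagger$ by checking the two defining properties of a minimum-norm solution, and then to deduce strong convergence from the density statement in Proposition~\ref{pr:dense_Y}.

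First I would establish that $u_n := P_{A^*\Y_n} u^\dagger$ is a solution of~\eqref{eq:Q_nAu=Q_ny}. Writing the residual $u^\dagger - u_n$, which by construction is orthogonal to $A^*\Y_n$, for any $w \in \Y_n$ I have
\begin{equation*}
(A(u^\dagger - u_n),w) = (u^\dagger - u_n, A^*w) = 0,
\end{equation*}
so $A(u^\dagger - u_n) \perp \Y_n$, i.e.\ $P_{\Y_n}A u_n = P_{\Y_n} A u^\dagger = P_{\Y_n} y$. Next, $u_n$ is of minimum norm because the general solution set of~\eqref{eq:Q_nAu=Q_ny} is $u_n + \ker(P_{\Y_n}A)$, and since $A$ is injective this kernel equals $\ker(P_{\Y_n}A) = (A^*\Y_n)^\perp$ (one inclusion is immediate; the other follows from a short duality argument: $v \in \ker(P_{\Y_n}A)$ iff $(Av,w)=0$ for all $w\in\Y_n$, iff $(v,A^*w)=0$ for all $w\in\Y_n$). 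Thus $u_n \in A^*\Y_n$ is orthogonal to every element of the solution set minus $u_n$, which makes it the minimum-norm element.

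For the convergence statement $\vdn \to u^\dagger$, it suffices to prove that $\overline{\bigcup_n A^*\Y_n} \supset u^\dagger$; indeed, by nestedness of the subspaces, the strong convergence of the orthogonal projections $P_{A^*\Y_n}$ to the projection onto this closure is standard. I would argue that in fact $\overline{\bigcup_n A^*\Y_n} = \U$. Since $A^*\colon \Y \to \U$ is bounded, continuity yields
\begin{equation*}
A^*\bigl(\overline{\textstyle\bigcup_n \Y_n}\bigr) \subset \overline{\textstyle\bigcup_n A^*\Y_n}.
\end{equation*}
By Proposition~\ref{pr:dense_Y} the left-hand side equals $A^*\overline{\range{A}}$, and since $\ker(A^*)\perp\overline{\range{A}}$, this coincides with $\range{A^*}$. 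Therefore $\overline{\range{A^*}} \subset \overline{\bigcup_n A^*\Y_n}$, with the reverse inclusion being trivial. Finally, injectivity of $A$ gives $\overline{\range{A^*}} = \ker(A)^\perp = \U$, so $\overline{\bigcup_n A^*\Y_n} = \U$ and $P_{A^*\Y_n} u^\dagger \to u^\dagger$ for any $u^\dagger \in \U$.

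The only step that requires more than bookkeeping is pushing the density through $A^*$; but thanks to boundedness of $A^*$ and the identity $A^*\Y = \range{A^*}$ (valid for any bounded operator due to the $\ker(A^*) \oplus \overline{\range A}$ decomposition of $\Y$), this is a short continuity argument rather than a real obstacle. The characterisation of $\ker(P_{\Y_n}A)$ as $(A^*\Y_n)^\perp$ is the one place where injectivity of $A$ is used; without it, we would only obtain the minimum-norm solution modulo $\ker(A)$.
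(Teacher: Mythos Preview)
Your proof is correct; the paper does not supply its own argument for this theorem but simply cites \cite[Thm.~3.24]{engl:1996}, so there is no in-paper proof to compare against. One small inaccuracy in your commentary: the identity $\ker(P_{\Y_n}A)=(A^*\Y_n)^\perp$ holds for any bounded $A$ and does not use injectivity---the place where injectivity of $A$ actually enters is the last step $\overline{\range{A^*}}=\ker(A)^\perp=\U$, which you already invoke correctly.
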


Hence, taking projections in $\Y$ as in~\eqref{eq:Q_nAu=Q_ny} seems a good alternative to projecting in the \rb{input space $\U$} as in~\eqref{AP_n u = y}. However, we will demonstrate that one cannot solve~\eqref{eq:Q_nAu=Q_ny} using training pairs~\eqref{eq:pairs}. Instead, one requires training data for the adjoint $A^*$ of the form 
\begin{equation}\label{eq:pairs_adj}
 \{v^i,y^i\}_{i=1,...,n} \quad \text{such that $v^i = A^* y^i$.}
\end{equation}
It is not clear whether this kind of training data can be obtained in practice, hence the relevance of the dual least squares method in the data driven setting is not clear.

\paragraph{A reconstruction formula.} 
The following result gives a simple characterisation of the Moore-Penrose inverse of $P_{\Y_n}A$, similarly to Theorem~\ref{thm:pseudoinv_image}, and highlights a connection between minimum norm solutions of projected problems~\eqref{AP_n u = y} and~\eqref{eq:Q_nAu=Q_ny}.

\begin{theorem}\label{thm:pseudoinv_data}
Let $A$ have a dense range, $P_{\Y_n}$ as in Definition~\ref{def:Un} and $P_{A^*\Y_n}$ as defined above. Then the Moore-Penrose inverse of $P_{\Y_n}A$ is given by
\begin{equation*}
(P_{\Y_n}A)^\dagger = P_{A^*\Y_n} A^{-1}.
\end{equation*}
Hence, the minimum norm solution $\vdn$ of~\eqref{eq:Q_nAu=Q_ny} is given by
\begin{equation}\label{eq:vdn2}
\vdn = P_{A^*\Y_n} A^{-1} P_{\Y_n} y = P_{A^*\Y_n} \udn,
\end{equation}
where $\udn$ is the minimum norm solution of~\eqref{AP_n u = y} as defined in~\eqref{eq:udn2}.
\end{theorem}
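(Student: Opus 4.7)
\medskip

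\noindent\textbf{Proof plan.} The strategy mirrors that of Theorem~\ref{thm:pseudoinv_image}: identify the kernel and range of $T \defeq P_{\Y_n} A$, then verify the four Moore-Penrose equations directly for the candidate inverse $S \defeq P_{A^*\Y_n} A^{-1}$. The statement $(P_{\Y_n}A)^\dagger = P_{A^*\Y_n} A^{-1}$ should be read as an equality on $\dom{A^{-1}} = \range{A}$; since $\range{T} = \Y_n \subset \range{A}$, the resulting map extends uniquely to all of $\Y$ via $T^\dagger = T^\dagger P_{\Y_n}$, which is consistent with writing $S$ as $P_{A^*\Y_n} A^{-1} P_{\Y_n}$.

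First, I would compute $\range T$ and $\ker T$. Because each $y^i \in \range A$, we have $\Y_n \subset \range A$, and since $A$ is injective with $A\U_n = \Y_n$ one gets $\range T = P_{\Y_n}(\range A) \supset P_{\Y_n}(\Y_n) = \Y_n$, while the reverse inclusion is trivial. Hence $\range T = \Y_n$, which is finite-dimensional and therefore closed, guaranteeing that $T^\dagger$ is bounded on $\Y$. For the kernel, $u \in \ker T$ iff $Au \in \Y_n^\perp$, i.e.\ $\ker T = A^{-1}(\Y_n^\perp)$. The key algebraic identity is then
\begin{equation*}
\orth{\ker T} = \cl{A^* \Y_n} = A^* \Y_n,
\end{equation*}
where closedness again uses that $\Y_n$ is finite-dimensional. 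This follows from the standard duality computation $u \in (A^*\Y_n)^\perp \iff \langle Au, y \rangle = 0\ \forall y \in \Y_n \iff Au \in \Y_n^\perp \iff u \in \ker T$. Consequently $P_{\orth{\ker T}} = P_{A^* \Y_n}$, which is the shape the third Moore--Penrose equation demands.

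Next I would establish the single working identity that drives everything else:
\begin{equation*}
P_{\Y_n} A \, P_{A^* \Y_n} = P_{\Y_n} A.
\end{equation*}
Indeed, for $v \in \U$ the difference $v - P_{A^*\Y_n} v$ lies in $(A^* \Y_n)^\perp = \ker T$, so applying $P_{\Y_n} A$ annihilates it. With this identity in hand, the four Moore--Penrose equations drop out by essentially one-line calculations: (i) $TS T = P_{\Y_n} A P_{A^* \Y_n} A^{-1} P_{\Y_n} A = P_{\Y_n} A A^{-1} P_{\Y_n} A = P_{\Y_n} A = T$; (ii) $STS = P_{A^* \Y_n} A^{-1} P_{\Y_n} A P_{A^* \Y_n} A^{-1} = P_{A^* \Y_n} A^{-1} P_{\Y_n} A A^{-1} = P_{A^* \Y_n} A^{-1} P_{\Y_n} = S$ on $\range A$; (iii) $ST = P_{A^* \Y_n} A^{-1} P_{\Y_n} A = P_{A^*\Y_n}$, which follows because $A^{-1}P_{\Y_n}A u - u \in \ker T = (A^*\Y_n)^\perp$ (since $P_{\Y_n} A$ annihilates it); (iv) $TS = P_{\Y_n} A P_{A^* \Y_n} A^{-1} = P_{\Y_n} A A^{-1} = P_{\Y_n} = P_{\range T}$ on $\range A$.

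Finally, uniqueness of the Moore--Penrose inverse yields $(P_{\Y_n}A)^\dagger = P_{A^* \Y_n} A^{-1}$, whence $\vdn = (P_{\Y_n}A)^\dagger P_{\Y_n} y = P_{A^* \Y_n} A^{-1} P_{\Y_n} y = P_{A^* \Y_n} \udn$ using the formula $\udn = A^{-1} P_{\Y_n} y$ from \eqref{eq:mp}. The main subtlety, and the only place that requires genuine care, is keeping track of where $A^{-1}$ is applied: everything works because $\Y_n \subset \range A$, so every instance of $A^{-1}$ acts on an element that is already in its domain; the density of $\range{A}$ is used implicitly to ensure that closures in the orthogonality computation behave correctly, though for this proof the finite-dimensionality of $\Y_n$ is what actually does the work.
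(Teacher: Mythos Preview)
Your proof is correct, but it takes a different route from the paper's. The paper's argument is a two-line reduction: it uses the identity $(P_{\Y_n} A)^\dagger = ((A^* P_{\Y_n})^*)^\dagger = ((A^* P_{\Y_n})^\dagger)^*$, observes that the dense-range assumption makes $A^*$ injective, and then invokes Theorem~\ref{thm:pseudoinv_image} (applied to $A^*$ in place of $A$, with $\Y_n$ playing the role of $\U_n$) to obtain $(A^* P_{\Y_n})^\dagger = (A^{-1})^* P_{A^*\Y_n}$; taking adjoints gives the result. You instead compute $\ker T$ and $\orth{\ker T} = A^*\Y_n$ directly via duality and verify the four Moore--Penrose equations by hand, driven by the single identity $P_{\Y_n} A P_{A^*\Y_n} = P_{\Y_n} A$.

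Each approach has its merits. The paper's proof is shorter and explains cleanly why the dense-range hypothesis appears (it is precisely what makes $A^*$ injective, so that Theorem~\ref{thm:pseudoinv_image} applies). Your direct verification is more self-contained and, notably, does not actually use the dense-range assumption at all: the closedness of $A^*\Y_n$ comes from finite-dimensionality, not from density of $\range A$. So your argument in fact establishes a slightly stronger statement. Your closing remark that density is ``used implicitly'' is therefore not quite right---it is used in the paper's proof, but your proof does not need it.
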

\begin{proof}
First we observe that 
\begin{equation*}
(P_{\Y_n} A)^\dagger = ((A^* P_{\Y_n})^*)^\dagger = ((A^* P_{\Y_n})^\dagger)^* .
\end{equation*}
Since $\range{A}$ is dense in $\Y$, the adjoint $A^*$ is injective, hence we can proceed similarly to Theorem~\ref{thm:pseudoinv_image} and obtain
\begin{equation*}
(A^* P_{\Y_n})^\dagger = (A^*)^\dagger P_{A^*\Y_n} = (A^\dagger)^* P_{A^*\Y_n} = (A^{-1})^* P_{A^*\Y_n},
\end{equation*}
since $A$ is injective. Taking the adjoint, we get that 
\begin{equation*}
(P_{\Y_n} A)^\dagger =  ((A^{-1})^* P_{A^*\Y_n})^* = P_{A^*\Y_n} A^{-1}.
\end{equation*}
The formula~\eqref{eq:vdn2} follows from this expression.
\end{proof}

\begin{remark}
Comparing~\eqref{eq:vdn1} and~\eqref{eq:vdn2}, we notice that
\begin{equation*}
P_{A^*\Y_n} \udn = P_{A^*\Y_n} u^\dagger,
\end{equation*}
i.e. $\udn$ and $u^\dagger$ differ only on the orthogonal complement of $A^*\Y_n$. 
\end{remark}

Since the subspace $A^*\Y_n$ is given by
\begin{equation*}
A^*\Y_n = \span\{A^* y^i\}_{i=1,...,n},
\end{equation*}
to compute the projection $P_{A^*\Y_n}$ one needs to know what $A^* y^i$ are. Hence, training pairs of the form~\eqref{eq:pairs_adj} are required, i.e. one needs training data for the \emph{adjoint} $A^*$. 

Using the orthonormal basis $\trdatab$ of $\Y_n$, we can write the projected equation~\eqref{eq:Q_nAu=Q_ny} as follows
\begin{equation*}
\sum_{i=1}^n (Au, \orthdata y^i)\orthdata y^i  = \sum_{i=1}^n (y,\orthdata y^i) \orthdata y^i
\end{equation*}
and hence
\begin{equation*}
(y,\orthdata y^i) = (Au, \orthdata y^i) = (u, A^* \orthdata y^i) = (u, \orthdata v^i), \quad i=1,...,n,
\end{equation*}
with $\orthdata v^i \defeq A^* \orthdata y^i$, $i=1,...,n$. Hence, $\vdn$ solves the following problem
\begin{equation}\label{eq:dual_lsq}
\min_{u \in \U} \norm{u}^2 \quad \text{s.t. $(u, \orthdata v^i) = (y,\orthdata y^i)$,\quad for $i=1,...,n,$}
\end{equation}
which does not require evaluating $A$ and can be solved using only the training pairs for the adjoint $A^*$~\eqref{eq:pairs_adj}.


\paragraph{Noisy data.}
To complete the presentation of the dual least squares method, we consider reconstructions from noisy data $y^\delta \not\in \range{A}$ such that $\norm{y-y^\delta}\leq\delta$. In this case the size of the training set $n$ also plays the role of the regularisation parameter, as the following result demonstrates.
\begin{theorem}[{\cite[Thm. 3.26]{engl:1996}}]\label{thm:engl_dual_noisy}
Let $y \in \range{A}$ and $y^\delta \in \Y$ s.t. $\norm{y-y^\delta} \leq \delta$. Let $\mu_n$ be the smallest singular value of $P_{\Y_n} A$. If $n = n(\delta)$ is chosen such that $n (\delta) \to \infty$ and $\frac{\delta}{\mu_{n(\delta)}} \to 0$ as $n \to \infty$ then 
\begin{equation} \label{eq:vdn_delta}
\vdndelta \defeq (Q_{n(\delta)}A)^\dagger Q_{n(\delta)} y^\delta 
\to u^\dagger \quad \text{as $\delta \to 0$.}
\end{equation}
\end{theorem}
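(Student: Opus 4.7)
The plan is to follow the standard semi-convergence decomposition: introduce the noise-free counterpart $\vdn = (P_{\Y_n}A)^\dagger P_{\Y_n} y$ and split the error via the triangle inequality
\begin{equation*}
\norm{u^\dagger - \vdndelta} \leq \norm{u^\dagger - \vdn} + \norm{\vdn - \vdndelta}.
\end{equation*}
This reduces the problem to controlling an \emph{approximation error} (first term) and a \emph{propagated data error} (second term), separately.

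For the approximation error, I would simply invoke Theorem~\ref{thm:engl_dual}, which gives $\vdn \to u^\dagger$ strongly as $n \to \infty$. Since the parameter choice rule guarantees $n(\delta) \to \infty$ as $\delta \to 0$, this term vanishes in the limit.

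For the propagated data error, linearity of the Moore-Penrose inverse of $P_{\Y_n}A$ yields
\begin{equation*}
\vdn - \vdndelta = (P_{\Y_n}A)^\dagger P_{\Y_n}(y - y^\delta),
\end{equation*}
so that $\norm{\vdn - \vdndelta} \leq \norm{(P_{\Y_n}A)^\dagger}\,\delta$. The key observation is that $\norm{(P_{\Y_n}A)^\dagger} = 1/\mu_n$, where $\mu_n$ is the smallest (nonzero) singular value of $P_{\Y_n}A$; this is standard once one notes that $P_{\Y_n}A$ has finite-dimensional range, hence closed range, and that by injectivity of $A$ (and density of $\range{A}$, which gives injectivity of $A^*$ on $\Y_n$), the operator $P_{\Y_n}A$ restricted to $(\ker P_{\Y_n}A)^\perp = A^*\Y_n$ is an isomorphism between two $n$-dimensional subspaces, so $\mu_n>0$. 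Combining, we obtain $\norm{\vdn - \vdndelta}\leq \delta/\mu_n$, which tends to zero under the assumption $\delta/\mu_{n(\delta)} \to 0$.

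The main obstacle, which is fortunately not too delicate, is the identification $\norm{(P_{\Y_n}A)^\dagger} = 1/\mu_n$ and the verification that $\mu_n > 0$; the rest of the argument is a clean application of Theorem~\ref{thm:engl_dual} together with the triangle inequality. The two conditions $n(\delta)\to\infty$ and $\delta/\mu_{n(\delta)}\to 0$ precisely balance the vanishing approximation error against the (possibly unbounded) amplification of the noise through $(P_{\Y_n}A)^\dagger$, exactly mirroring the parameter choice rule derived in Theorem~\ref{thm:lsq_par_choice} for regularisation by projection in $\U$.
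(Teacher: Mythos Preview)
The paper does not provide its own proof of this theorem; it is simply quoted from~\cite[Thm.~3.26]{engl:1996} without argument, so there is nothing in the paper to compare against. Your proposal is the standard textbook proof of this result: the triangle-inequality split into approximation error (handled by Theorem~\ref{thm:engl_dual}) and propagated data error (handled by $\norm{(P_{\Y_n}A)^\dagger}=1/\mu_n$) is exactly the argument given in~\cite{engl:1996}, and it is correct. One minor remark: you could note that $(P_{\Y_n}A)^\dagger P_{\Y_n} = (P_{\Y_n}A)^\dagger$ because $\range{P_{\Y_n}A}=\Y_n$, which makes the bound $\norm{\vdn-\vdndelta}\leq \delta/\mu_n$ immediate without separately invoking $\norm{P_{\Y_n}}\leq 1$.
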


Non-zero singular values of $P_{\Y_n}A$ coincide with those of $(P_{\Y_n}A)^* = A^*P_{\Y_n}$ and can be computed using the Gram matrix
\begin{equation*}
(A^* \orthdata y^i, A^* \orthdata y^j)_{i,j = 1,...,n} = (\orthdata v^i, \orthdata v^j)_{i,j = 1,...,n},
\end{equation*}
which is available from~\eqref{eq:pairs_adj}, and a parameter choice rule similar to that in Theorem~\ref{thm:lsq_par_choice} can be obtained.

\section{Variational Regularisation}\label{sec:var_reg}
In this section we turn back to the projected problem~\eqref{AP_n u = y} using projections in  $\U$. If assumptions of Theorem~\ref{thm:engl_weak} (or Theorem \ref{thm:conv_lsq})
are not satisfied and the minimum norm solution of~\eqref{AP_n u = y}  may become unbounded as $n \to \infty$, explicit regularisation can be used to ensure boundedness and convergence. A straightforward way to approximate a minimum norm solution of~\eqref{AP_n u = y} is Tikhonov regularisation
\begin{equation}\label{eq:proj_Tikh}
\min_{u \in \U} \frac12 \norm{AP_{\U_n} u - y}^2 + \alpha \norm{u}^2,
\end{equation}
where $y \in \range{A}$ is the exact right-hand side in~\eqref{eq:Au=y}, $P_{\U_n}$ is the projector onto the span of the inputs $\trim$ and $\alpha \in \R_+$ is a regularisation parameter. Clearly, by setting $\alpha = 0$ we obtain a least-squares solution of~\eqref{AP_n u = y}.

More generally, the term $\norm{u}^2$ in~\eqref{eq:proj_Tikh} can be replaced with an arbitrary regulariser $\reg \colon \U \to \R \cup \{+\infty\}$ to obtain the following variational regularisation problem
\begin{equation}\label{eq:proj_var_exact}
\min_{u \in \U} \frac12 \norm{AP_{\U_n} u - y}^2 + \alpha \reg(u).
\end{equation}
Finally replacing the exact data $y$ with a noisy measurement $y^\delta$ such that $\norm{y-y^\delta} \leq \delta$, we obtain
\begin{equation}\label{eq:proj_var_noisy}
\min_{u \in \U} \frac12 \norm{AP_{\U_n} u - y^\delta}^2 + \alpha \reg(u).
\end{equation}

The restricted operator $AP_{\U_n}$ in~\eqref{eq:proj_var_noisy} can be computed \emph{without numerical access} to $A$. Indeed, for any $u \in \U$, we have
\begin{equation*}
P_{\U_n}u = \sum_{i=1}^n (u, \orthim u^i) \orthim u^i \quad \text{and} \quad AP_{\U_n}u = \sum_{i=1}^n (u, \orthim u^i) \orthim y^i,
\end{equation*}
where $\trimhat$ are the orthonormalised inputs and $\trdatahat$ the accordingly transformed outputs, cf. Section~\ref{sec:gs_im}. Therefore, solving~\eqref{eq:proj_var_noisy} does not require direct evaluation of $A$ and can be done just using the training pairs~\eqref{eq:pairs}.
Once the Gram-Schmidt process is complete, evaluating the operator $AP_{\U_n}$ becomes very fast; the main computational burden lies on the orthogonalisation process, which, however, is done ``offline'', i.e. before running any optimisation algorithms on~\eqref{eq:proj_var_noisy}, and only once. Therefore, Gram-Schmidt orthogonalisation can be regarded as ``training''. We note also that adding more training pairs to~\eqref{eq:pairs} does not require retraining, i.e. running the Gram-Schmidt process on the whole training set. Only the new portion of training data needs to be made orthogonal to the old data.

Clearly, the operator $AP_{\U_n}$ approximates $A$ pointwise as $n \to \infty$; if $A$ is compact then we also get approximation in the operator norm~\cite{conway:1985}. Hence,~\eqref{eq:proj_var_noisy} can be regarded as an inverse problem with an inexact forward operator and known results on inverse problems with operator errors can be used (e.g.,~\cite{NeuSch90, Poeschl:2010}). 

\subsection{Convergence analysis}
The goal of this section is to show existence of minimisers in~\eqref{eq:proj_var_noisy} and obtain convergence rates under an appropriate parameter choice rule $\alpha = \alpha(n,\delta)$. There are no new results here as standard results are applicable, the main point being that by formulating~\eqref{eq:proj_var_noisy} we are able to transfer these standard, model-based results into the purely data driven, model-free setting.

We emphasise that the size of the training set in this setting $n$ controls the approximation quality of the forward operator and affects the choice of the regularisation parameter $\alpha$ along with the noise level $\delta$. We formulate the parameter choice rules in terms of the residual $\norm{(I-P_{\U_n})\Jminsol}$ of the expansion of the exact solution in the basis $\trimhatinf$. A more common way of deriving parameter choice rules (e.g.,~\cite{NeuSch90, Poeschl:2010}) is in terms of the approximation error in the operator norm $h_n$ such that $\norm{A-AP_{\U_n}} \leq h_n$, however, this is a global estimate that depends on how well the subspaces $\U_n$ agree with the operator $A$ (the ideal choice would be, obviously, the eigenspaces of $A$ corresponding to the $n$ largest eigenvalues). The residual $\norm{(I-P_{\U_n})\Jminsol}$ is a local quantity that shows how well the subspaces $\U_n$ approximate the particular solution $\Jminsol$ that we are looking for. Hence, even if the global approximation error $\norm{A-AP_{\U_n}}$ is large, convergence can still be fast if the training data~\eqref{eq:pairs} are chosen well for a particular solution $\Jminsol$. In some sense, the choice of the training inputs $\trim$ in~\eqref{eq:pairs} is a way of using \emph{a priori} information about the solution $\Jminsol$ to solve the inverse problem~\eqref{eq:Au=y}.
 
Below we summarise the main assumptions and recall the required existence and convergence results.
 
\begin{assumption}\label{ass:J}
	The regularisation functional $\reg \colon \U \to \R_+ \cup \{+\infty\}$ is proper, convex and lower-semicontinuous. 
\end{assumption}
\begin{assumption}\label{ass:P_n_ker_J}
	    For every $M,\alpha >0$ and every $n \in \N$, the sets 
	    $\{u \in \U \colon \norm{AP_{\U_n} u - y^\delta}^2+\alpha J(u) \leq M\}$ are weakly sequentially compact.
\end{assumption}
\begin{remark}\label{rem:tot_var}
	If $\mathcal{J}$ is the Total Variation, Assumptions \ref{ass:J} and \ref{ass:P_n_ker_J} are satisfied if $AP_{\U_n}:L^2\to L^2$ does not annihilate constant functions, see \cite{AubVes97}.  
\end{remark}
We are now ready to state a-priori bounds for minimisers of \eqref{eq:proj_var_noisy}.
\begin{theorem}\label{thm:J_bounded}
Suppose that Assumptions~\ref{ass:J} and~\ref{ass:P_n_ker_J} are satisfied and the regularisation parameter $\alpha = \alpha(\delta,n)$ is chosen such that
	\begin{equation*}
	\alpha \to 0 \qand \frac{(\delta + \norm{(I-P_{\U_n}) \Jminsol})^2}{\alpha} \to 0 \quad \text{as $\delta \to 0$ and $n \to \infty$},
	\end{equation*}
where $\Jminsol$ is the exact solution of~\eqref{eq:Au=y}. Then~\eqref{eq:proj_var_noisy} admits a minimiser and for every minimiser $\unJ$ there exists a constant $C >0$ independent of $n$ and $\delta$ such that 
	\begin{equation*}
	\reg(\unJ) \leq C \qand \norm{\unJ} \leq C.
	\end{equation*}
\end{theorem}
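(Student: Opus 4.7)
The plan is to adapt the standard existence-and-bounds argument from variational regularisation theory (going back to the analysis in, e.g.,~\cite{scherzer_var_meth:2009}) to our projected setting, where the perturbed forward operator is $AP_{\U_n}$ rather than $A$ and the ``operator error'' at $\Jminsol$ is controlled by $\norm{(I-P_{\U_n})\Jminsol}$.

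First I would establish existence of a minimiser of~\eqref{eq:proj_var_noisy} for fixed $n$ and $\delta$ via the direct method. Let $\{u_k\}$ be a minimising sequence. Since $\tfrac12 \norm{AP_{\U_n} u_k - y^\delta}^2 + \alpha \reg(u_k)$ is bounded, Assumption~\ref{ass:P_n_ker_J} gives a weakly convergent subsequence $u_k \weakto \unJ$. Lower semicontinuity of $\reg$ (Assumption~\ref{ass:J}) and weak lower semicontinuity of $u \mapsto \norm{AP_{\U_n} u - y^\delta}^2$ (it is convex and continuous, hence weakly l.s.c.) imply that $\unJ$ is a minimiser.

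Next I would derive the uniform bound on $\reg(\unJ)$. By minimality,
\begin{equation*}
\tfrac12 \norm{AP_{\U_n} \unJ - y^\delta}^2 + \alpha \reg(\unJ) \leq \tfrac12 \norm{AP_{\U_n} \Jminsol - y^\delta}^2 + \alpha \reg(\Jminsol).
\end{equation*}
Writing $AP_{\U_n}\Jminsol = A\Jminsol - A(I-P_{\U_n})\Jminsol = y - A(I-P_{\U_n})\Jminsol$ and using $\norm{y-y^\delta}\leq\delta$, I get
\begin{equation*}
\norm{AP_{\U_n}\Jminsol - y^\delta} \leq \delta + \norm{A}\norm{(I-P_{\U_n})\Jminsol}.
\end{equation*}
Dropping the nonnegative fidelity term on the left, dividing by $\alpha$, and using the parameter choice rule yields
\begin{equation*}
\reg(\unJ) \leq \frac{(\delta + \norm{A}\norm{(I-P_{\U_n})\Jminsol})^2}{2\alpha} + \reg(\Jminsol) \to \reg(\Jminsol)
\end{equation*}
as $\delta\to 0$, $n\to\infty$, so $\reg(\unJ)$ is bounded by some constant $C$ independent of $n,\delta$.

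Finally, the same chain of inequalities shows that the fidelity $\norm{AP_{\U_n}\unJ - y^\delta}^2$ is bounded independently of $n,\delta$, and hence the whole objective $\tfrac12 \norm{AP_{\U_n} \unJ - y^\delta}^2 + \alpha \reg(\unJ)$ is uniformly bounded above by some $M>0$. Therefore $\unJ$ lies in the sublevel set $\{u : \norm{AP_{\U_n} u - y^\delta}^2 + \alpha \reg(u) \leq 2M\}$, which by Assumption~\ref{ass:P_n_ker_J} is weakly sequentially compact, hence bounded in $\U$; this gives $\norm{\unJ} \leq C$ uniformly. The only delicate point is ensuring that the constant $C$ from weak sequential compactness can be chosen uniformly in $n$ and $\delta$, but since the bound on the objective value is uniform and Assumption~\ref{ass:P_n_ker_J} is formulated uniformly over $n$ with a fixed bound $M$, this is not a genuine obstacle.
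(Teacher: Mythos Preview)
Your argument is the standard one the paper defers to (the paper's own proof is merely a pointer to~\cite{NeuSch90, Poeschl:2010}), and the existence step and the uniform bound on $\reg(\unJ)$ are carried out correctly.

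The one point that deserves more care is your final paragraph. Assumption~\ref{ass:P_n_ker_J} guarantees weak sequential compactness of the sublevel set for \emph{each fixed} triple $(M,\alpha,n)$; it does not, as stated, assert any uniformity of the resulting norm bound across $n$, $\alpha$, or $y^\delta$. Your sentence ``Assumption~\ref{ass:P_n_ker_J} is formulated uniformly over $n$ with a fixed bound $M$, this is not a genuine obstacle'' is not a valid inference: a family of sets each of which is weakly compact can still have unbounded union. What actually saves the argument is that from the uniform bounds $\reg(\unJ)\leq C_1$ and $\norm{AP_{\U_n}\unJ - y^\delta}\leq C_2$ one can, for any \emph{fixed} $\alpha_0>0$, place all $\unJ$ in the single family of sublevel sets $\{u:\norm{AP_{\U_n} u - y^\delta}^2 + \alpha_0 \reg(u)\leq C_2^2+\alpha_0 C_1\}$; one then needs Assumption~\ref{ass:P_n_ker_J} to deliver a bound uniform in $n$ (and in $y^\delta$, which is itself bounded). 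In the concrete instances the paper has in mind (cf.\ Remark~\ref{rem:tot_var}), this uniformity follows from the underlying coercivity argument, but you should either state explicitly that Assumption~\ref{ass:P_n_ker_J} is to be read with this uniformity, or invoke the specific coercivity mechanism rather than abstract compactness.
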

\begin{proof}
The proof is similar to \cite{NeuSch90, Poeschl:2010}. However, note that in \cite{NeuSch90, Poeschl:2010}, the regularisation 
solution of the functional $u \in U \to \norm{Au-y^\delta}^2+\alpha J(u)$ is approximated by the 
minimiser of the same functional over $\U_n$, while we solve the minimisation on the infinite
dimensional space.
\end{proof}

In modern variational regularisation, (generalised) Bregman distances are typically used to study convergence of approximate solutions~\cite{Benning_Burger_modern:2018}. We briefly recall the definition.
\begin{definition} For a proper convex functional $\reg$ the generalised Bregman distance between $u,v \in \U$ corresponding to the subgradient $p \in \dJ(v)$ is defined as follows
	\begin{equation*}
	D_\reg^p(u,v) \defeq J(u) - J(v) - (p,u-v).
	\end{equation*}
	Here $\dJ(v)$ denotes the subdifferential of $\reg$ at $v \in \U$.
\end{definition}
To obtain convergence rates, an additional assumption on the regularity of the exact solution, called the~\emph{source condition}, needs to be made~\cite{engl:1996}. Several variants of the source condition exist (e.g.,~\cite{engl:1996, scherzer_var_meth:2009, Benning_Burger_modern:2018}); we use the following one~\cite{Burger_Osher:2004}.
\begin{assumption}[Source condition]\label{ass:sc}
	There exists an element $q \in \Y$ such that
	\begin{equation*}
	A^*q \in \dJ(\Jminsol).
	\end{equation*}
\end{assumption}

\begin{theorem}\label{thm:conv_rates_var}
	Suppose that Assumptions~\ref{ass:J},~\ref{ass:P_n_ker_J} and~\ref{ass:sc} are satisfied. 
	Then the following estimate for the Bregman distance between $\unJ$ and $\Jminsol$ corresponding to the subgradient $A^*q$ from Assumption~\ref{ass:sc} holds
	\begin{equation*}
	D_\reg^{A^* q}(\unJ,\Jminsol) \leq  \frac{1}{2\alpha} \left(\delta + \norm{A}\norm{(I-P_{\U_n})\Jminsol} \right)^2 + \frac{\alpha}{2}\norm{q}^2  +   (\delta \norm{q} + C \norm{(I-P_{\U_n})A^*q})
	\end{equation*}
	for some constant $C>0$. 
	
	If the regularisation parameter $\alpha = \alpha(\delta,n)$ is chosen as in Theorem~\ref{thm:J_bounded} then 
	\begin{equation*}
	D_\reg^{A^* q}(\unJ,\Jminsol) \to 0 \quad \text{as $\delta \to 0$ and $n \to \infty$}.
	\end{equation*}
	For the particular choice
	\begin{equation}\label{var:parameter_choice}
	\alpha \sim (\delta + \max\{\norm{(I-P_{\U_n})\Jminsol},\norm{(I-P_{\U_n})A^*q}\})
	\end{equation}
	we obtain the following estimate
	\begin{equation*}
	D_\reg^{A^* q}(\unJ,\Jminsol)  \sim \alpha.
	\end{equation*}
\end{theorem}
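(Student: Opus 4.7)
The strategy is the standard Tikhonov--Bregman argument adapted to the inexact operator $AP_{\U_n}$. First I would start from the minimising property of $\unJ$ applied to $\Jminsol$ as a competitor,
\begin{equation*}
\tfrac12\norm{AP_{\U_n}\unJ-y^\delta}^2+\alpha\reg(\unJ) \leq \tfrac12\norm{AP_{\U_n}\Jminsol-y^\delta}^2+\alpha\reg(\Jminsol),
\end{equation*}
and control the right-hand side by $A\Jminsol=y$, $\norm{y-y^\delta}\leq\delta$ and the triangle inequality to obtain
\begin{equation*}
\norm{AP_{\U_n}\Jminsol-y^\delta}\leq \delta+\norm{A}\norm{(I-P_{\U_n})\Jminsol}.
\end{equation*}

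Next I would move the linearisation of the regulariser at $\Jminsol$ to the other side: subtracting $\alpha\reg(\Jminsol)+\alpha(A^*q,\unJ-\Jminsol)$ with $A^*q\in\dJ(\Jminsol)$ turns the left-hand side into $\alpha D_{\reg}^{A^*q}(\unJ,\Jminsol)$ plus the fidelity squared. The key step is rewriting the cross term. Using the self-adjointness of $P_{\U_n}$ and $A\Jminsol=y$,
\begin{equation*}
\alpha(A^*q,\Jminsol-\unJ)=\alpha(q,y-AP_{\U_n}\unJ)-\alpha((I-P_{\U_n})A^*q,\unJ).
\end{equation*}
The first summand on the right is further split via $\alpha(q,y-AP_{\U_n}\unJ)=\alpha(q,y-y^\delta)+\alpha(q,y^\delta-AP_{\U_n}\unJ)$; the first piece is bounded by $\alpha\delta\norm{q}$, and on the second I would apply Young's inequality $ab\leq \frac12 a^2+\frac12 b^2$ so that $\tfrac12\norm{y^\delta-AP_{\U_n}\unJ}^2$ exactly cancels the fidelity on the left and only $\tfrac12\alpha^2\norm{q}^2$ remains. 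The second summand is estimated by Cauchy--Schwarz and the uniform a priori bound $\norm{\unJ}\leq C$ from Theorem~\ref{thm:J_bounded}, giving $C\alpha\norm{(I-P_{\U_n})A^*q}$.

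Collecting all four contributions and dividing by $\alpha$ yields exactly the stated estimate. For the convergence assertion, the first term tends to zero under the parameter choice of Theorem~\ref{thm:J_bounded}, the second because $\alpha\to 0$, the third because $\delta\to 0$, and the fourth because $A^*q\in\U$ and by Assumption~\ref{ass_3} the subspaces $\U_n$ are dense in $\U$, so $\norm{(I-P_{\U_n})A^*q}\to 0$. Finally, for the prescribed balance
$\alpha\sim\delta+\max\{\norm{(I-P_{\U_n})\Jminsol},\norm{(I-P_{\U_n})A^*q}\}$
each of the four terms is easily seen to be of order $\alpha$, giving the claimed rate $D_{\reg}^{A^*q}(\unJ,\Jminsol)\sim\alpha$.

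The main subtlety, and the only place where the argument departs from the classical Burger--Osher--style proof, is the handling of the cross term $(A^*q,\unJ-\Jminsol)$: one must introduce $AP_{\U_n}\unJ$ rather than $A\unJ$ so that the Young absorption cancels against the actual fidelity in the optimality inequality, and this is precisely what forces the residual $\norm{(I-P_{\U_n})A^*q}$ to appear. The uniform bound $\norm{\unJ}\leq C$ from Theorem~\ref{thm:J_bounded} is essential here and is the reason the parameter choice of that theorem carries over unchanged.
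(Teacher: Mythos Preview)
Your proposal is correct and follows exactly the standard Burger--Osher/Tikhonov--Bregman argument adapted to the inexact operator $AP_{\U_n}$, which is precisely what the paper invokes by reference to~\cite{NeuSch90,Poeschl:2010}; the paper itself gives no detailed proof beyond this citation. In particular, your identification of the key departure from the classical case --- splitting the cross term via $P_{\U_n}$ so that the residual $\norm{(I-P_{\U_n})A^*q}$ emerges and must be controlled using the uniform bound on $\norm{\unJ}$ from Theorem~\ref{thm:J_bounded} --- is exactly right, and this is the caveat the paper alludes to.
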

\begin{proof}
The proof is similar to \cite{NeuSch90, Poeschl:2010}, with the same caveat as in Theorem~\ref{thm:J_bounded}.
\end{proof}

The fact that the speed of convergence depends on how well the subspaces $\U_n$ spanned by the training inputs $\trim$ approximate the solution $\Jminsol$ is not surprising. However, from Theorem~\ref{thm:conv_rates_var} we conclude that  it is equally important for the convergence rate that the subgradient $A^*q$ from the source condition is well approximated by the subspaces $\U_n$.

\begin{remark}
The results of this section are applicable in a more general setting than the previous parts of the paper. In particular, the forward operator $A$ does not have to be injective. In this case, with some minor (and standard) modifications of the assumptions, we would get convergence of minimisers of~\eqref{eq:proj_var_noisy} to a solution of~\eqref{eq:Au=y} with the minimal value of the regulariser $\reg$ (a \emph{$\reg$-minimising solution}). 
\end{remark}

\section{Numerical Experiments}\label{sec:numerics}
{In this section we present some numerical experiments with the data driven regularisation by projection method based on formula \eqref{eq:udn2}, {the dual least squares method~\eqref{eq:vdn2} and data driven variational regularisation~\eqref{eq:proj_var_noisy}}. 
We explain the data preparation and describe the used data sets first - in particular, how we derive the training pairs for the Radon operator.  }

\subsection{Setting}
\subsubsection{Dataset} \label{sec:dataset}
We use images from ``The 10k US Adult Faces Database''~\cite{faces} (to which we refer as \ds). This is a dataset of $10168$ natural face photographs of $256$ pixels height and variable width (typically between $150$ and $220$ pixels). To see what happens as the size of the training set $n$ gets close to the number of pixels in the image (the discrete analogue of $n \to \infty$), we resize all images to $100 \times 100$ pixels to match the size of our data base ($10k$ images). The effective dimension of the discretised space $\U$, i.e. the number linearly independent images among the $10000$, is smaller than the total dimension, since the images have been cropped with an oval around the face. \bb{About $700$ pixels have the value of exactly $1$ (white) in all images, and further $1500$ have values between $0.9$ and $1$ in all images. This makes the number of ``degrees of freedom'' effectively around $7800$.} Sample images from the database are shown in Figure~\ref{fig:sample_images}.

\begin{figure}[t!]
	\centering
    \begin{minipage}{0.45\textwidth}
	\centering
	\foreach \i in {1,7,10,5} {%
		\begin{subfigure}[p]{0.45\textwidth}
			\includegraphics[width=\textwidth]{faces_\i.png}
			\subcaption{}
		\end{subfigure}
	}
	\caption{Samples from the \ds dataset.}\label{fig:sample_images}
    \end{minipage}\hfill
	\begin{minipage}{0.5\textwidth}
        \centering
        \includegraphics[width=\textwidth]{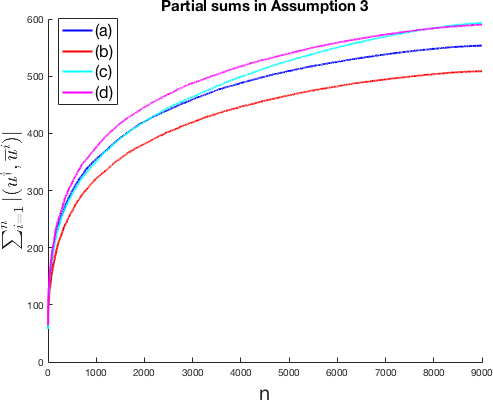}
	\caption{Partial sums $\sum_{i=1}^n \abs{(u^\dagger, \orthim u^i)}$ in Assumption~\ref{ass:l1_coefs_gt} for images $u^\dagger$ shown in Figure~\ref{fig:sample_images} and $n = 1,...,9000$. \rb{The figure does not contradict Assumption~\ref{ass:l1_coefs_gt}}.}\label{fig:l1_coefs_gt}
    \end{minipage}\hfill
\end{figure}

As our training inputs $\trim$ in~\eqref{eq:pairs} we choose $n$ randomly selected photographs, with different $n \leq 10000$. As the ground truth $u^\dagger$ in~\eqref{eq:Au=y} we also take photographs from the \ds dataset (which are not contained in the training set $\trim$).

Applying Gram-Schmidt orthogonalisation to $\trim$ (cf. Section~\ref{sec:gs_im}), we obtain an orthonormal system $\trimhat$. (We use the modified Gram-Schmidt algorithm~\cite{trefethen_num_lin_al}.) Using this system, we can check Assumption~\ref{ass:l1_coefs_gt} from Section~\ref{sec:weak} numerically by plotting the partial sums
\begin{equation*}
\sum_{i=1}^n \abs{(u^\dagger, \orthim u^i)}
\end{equation*}
for different $n$. The result is shown in Figure~\ref{fig:l1_coefs_gt}. The series seem to be bounded uniformly and hence the figure does not contradict Assumption~\ref{ass:l1_coefs_gt}. We stress that Assumption~\ref{ass:l1_coefs_gt} is an assumption on the dataset and does not depend on the forward operator; it only measures how well a single element of the dataset (the exact solution $u^\dagger$) can be approximated with other elements of the dataset (the training inputs $\trim$).

\subsubsection{Gram-Schmidt orthogonalisation vs Householder reflections}\label{sec:GS_numerics}

\rb{It is well know that (even the modified) Gram-Schmidt algorithm is numerically unstable if the number of vectors is large and Householder reflections~\cite{trefethen_num_lin_al} provide a stable alternative. However, the decisive advantage of Gram-Schmidt is that it allows adding new training points without the need to re-orthogonalise the whole dataset, which is clearly advantageous in practice, since orthogonalisation is the most time consuming part of the pipeline. A practical recommendation would be to use Householder reflections to orthogonalise the ``initial'' dataset (say, the training pairs that are available as one starts using the method) and then use Gram-Schmidt to add new training points. Perhaps one could also use Householder reflections once in a while to re-orthogonalise the training set if the number of `new' training points added after the last application of Householder reflections is sufficiently large.}

\rb{However, we would like to emphasise that when regularisation by projection is used with a noisy measurement $y^\delta$ in~\eqref{eq:Au=y} (which is always the case in practice), the amount of training pairs shouldn't be too large anyway as this would compromise stability (cf. Theorems~\ref{thm:lsq_par_choice} and~\ref{thm:engl_dual_noisy} as well as \cite[Thm. 4.2]{Burger_Engl:1999}). This is not the case, however, for variational regularisation, where the noise is counteracted by increasing the regularisation parameter $\alpha$.
}

\subsubsection{Forward operator}\label{sec:forward_op}
We consider our images to be elements of $L^2(\Omega)$, where $\Omega \subset \R^2$ is the (bounded) image domain. As the forward operator, we take the Radon transform with a parallel beam geometry $\Radon \colon L^2(\Omega) \to L^2(\R \times [0,\pi])$. We use Matlab's implementation of the Radon transform \texttt{radon} with $70$ projections uniformly distributed on the interval $[0, \pi)$. The size of the Radon data produced by \texttt{radon} is $145 \times 70 \sim 10k$ pixels. Applying the Radon transform to the training inputs $\trim$, we obtain the outputs (sinograms) $\trdata$ and hence get the pairs~\eqref{eq:pairs}. Sample training pairs are shown in Figure~\ref{fig:faces_triplets} (left and central columns).

\begin{figure}[t!]
	\centering
    \begin{minipage}{0.49\textwidth}
	\foreach \i in {1,7} {%
	 \begin{subfigure}[p]{\textwidth}
	    \includegraphics[width=\textwidth]{Faces_triplet_\i.png}
	 \end{subfigure}
	}
	\caption{Samples $u^i$ from the \ds dataset (left column), their Radon transforms $y^i \defeq Au^i$ (central column) and their adjoint transforms $v^i \defeq A^*y^i = A^*Au^i$ (right column). \\  \\  \\  \\ }\label{fig:faces_triplets}
	\end{minipage}\hfill
	\begin{minipage}{0.49\textwidth}
        \centering
		\includegraphics[width=\textwidth]{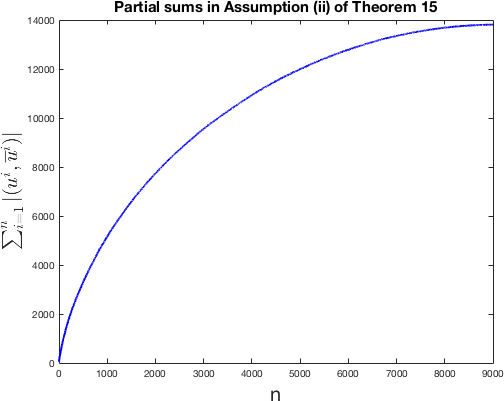}
		\caption{Partial sums in Assumption~\ref{thm:strong_ii} of Theorem~\ref{thm:strong}. \rb{The figure does not contradict Assumption~\ref{thm:strong_ii} of Theorem~\ref{thm:strong}}. The behaviour is similar to that in Figure~\ref{fig:l1_coefs_gt}, but the scale is different ($\sim 14000$ here vs $\sim 600$ in Figure~\ref{fig:l1_coefs_gt}).}
		\label{fig:l1_coefs_strong}
    \end{minipage}
\end{figure}

Assumption~\ref{ass:l2_coefs_proj} depends both on the training inputs $\trim$ and the forward operator $A$. Transforming the training outputs $\trdata$ to match the orthonormalised inputs $\trimhat$, we obtain the transformed sinograms $\trdatahat$. Recall that Assumption~\ref{ass:l2_coefs_proj} was a condition on the expansion coefficients of $P_{\Y_n} \orthim y^i$ for $i \geq n+1$ in the non-orthogonal basis $\trdatahat$. 
To speed up computations, we check this condition on downsampled images of size $32 \times 32$. The result is shown in Figure~\ref{fig:l2_coefs_proj}. The sums are not bounded, hence Assumption~\ref{ass:l2_coefs_proj} doesn't seem to be satisfied. 

However, there is a significant numerical error accumulating in the Gram-Schmidt process as $n$ increases and this error will have an effect on the computed expansion coefficients $\beta_j^{i,n}$. To find these coefficients, we need to solve a linear system whose matrix $\orthim Y_n \defeq (\orthim y^1, \dots, \orthim y^n)$ is formed of the non-orthogonal vectors $\trdatahat$. This matrix will become ill-conditioned for large $n$, since the vectors $\trdatahat$ will become close to being linearly dependent (cf. Proposition~\ref{prop:proj_yn}, where this is proven for $\trdatainf$; the proof for $\trdatahatinf$ is the same). The condition number of this matrix as a function of $n$ is shown in Figure~\ref{fig:l2_coefs_cond_num}; it starts exploding at $n \sim 500$. 
Therefore, in our opinion, the question whether Assumption~\ref{ass:l2_coefs_proj} can be satisfied for real data remains open.

We shall see in Section~\ref{sec:numerics_reg_proj}, however, that the reconstructions~\eqref{eq:udn2} obtained from clean data $y \in \range{A}$ remain bounded as $n$ increases (until numerical artefacts kick in).

Assumptions~\ref{thm:strong_i} and~\ref{thm:strong_ii} of Theorem~\ref{thm:strong} are satisfied. Since the training inputs $\trim$ and the exact solution $u^\dagger$ are similar, we expect that $\frac{\abs{(y, \orthdata y^i)}}{\abs{(y^i, \orthdata y^i)}} ~ \sim 1$, which is in fact what we observe numerically (we omit the plot). Hence, assumption~\ref{thm:strong_i} is satisfied. Assumption~\ref{thm:strong_ii} is also satisfied as shown in Figure~\ref{fig:l1_coefs_strong}.

\begin{figure}[t!]
	\centering
    \begin{minipage}{0.5\textwidth}
	\centering
	\includegraphics[width=\textwidth]{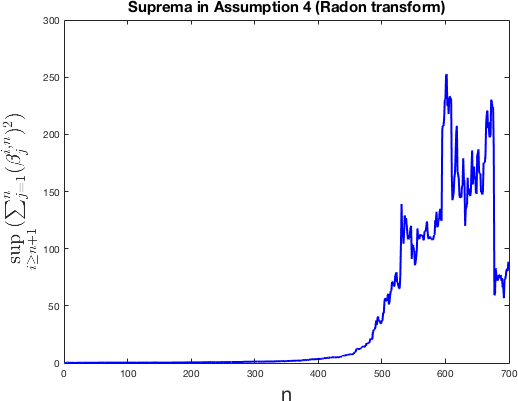}
	\caption{The supremum over the squared $2$-norms of the expansion coefficients of $\orthim y^i$, $i \geq n+1$, in the  basis $\trdatahat$ for $n=1,...,700$ (computed on downsampled images of size $32 \times 32$). Since the basis is non-orthogonal, the expansion coefficients change with $n$. 
		However, we note that the growth can be a numerical artefact: to compute the coefficients $\beta_j^{i,n}$, we need to invert an ill-conditioned matrix whose condition number as a function of $n$ is shown in Figure~\ref{fig:l2_coefs_cond_num}. \\ }
    \end{minipage}\hfill
	\begin{minipage}{0.48\textwidth}
        \centering
	\includegraphics[width=\textwidth]{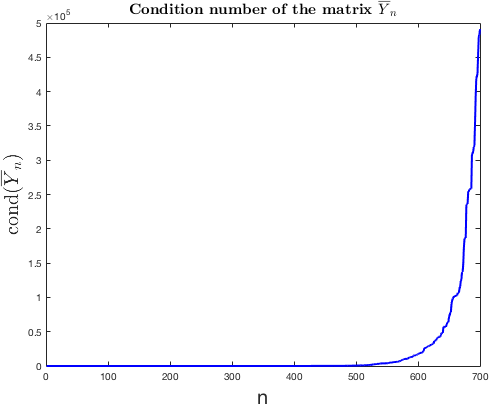}
	\caption{To compute the coefficients $\beta_j^{i,n}$ from Assumption~\ref{ass:l2_coefs_proj}, we need to invert the matrix 
$\orthim Y_n \defeq (\orthim y^1, \dots, \orthim y^n)$ 
composed of the transformed outputs $\trdatahat$, cf. Section~\ref{sec:gs_im}. This matrix becomes ill-conditioned for large $n$, which may result in inaccuracies in the computation of the coefficients $\beta_j^{i,n}$. We emphasise, however, that inverting this matrix is only needed to numerically check Assumption~\ref{ass:l2_coefs_proj}; the actual reconstruction algorithm~\eqref{eq:udn2} does not need it. \\}\label{fig:l2_coefs_cond_num}
    \end{minipage}
\end{figure}


\subsubsection{\bb{Absence of inverse crime}}
\bb{
The term ``inverse crime'' was introduced in~\cite{Colton_Kress:1992} to describe the situation when the same model is used to generate synthetic data and subsequently to solve an inverse problem involving this data. This procedure may artificially decrease the error introduced by a reconstruction algorithm and therefore should be avoided. In our experiments, we use the same model (Matlab's implementation of the Radon transform) to generate both the training data $\trdata$ and the data $y$ that we invert. However, we do not use the same model to solve the inverse problem: we only use the training pairs $\trpairs$ and the approximation of the forward operator or its inverse that we learn from these pairs. Hence, no inverse crime is committed here. Moreover, it is only natural to assume that in real applications the measurement $y$ and the training data $\trdata$ will be generated under very similar conditions, perhaps even by the same measurement device.
}

\subsection{Regularisation by projection}\label{sec:numerics_reg_proj}
We start by analysing reconstructions from clean data $y$ from \eqref{eq:delta}. Figure~\ref{fig:norm_rec_udn} shows the norm of $\udn$ defined in~\eqref{eq:udn2} as a function of $n$ (blue solid line). After some oscillations at small $n$, the norm remains stable until $n \sim 7000$ and then it explodes. This may be due to the failure of Assumption~\ref{ass:l2_coefs_proj} and the instability of regularisation by projection, however, this may be also caused by the numerical instability of Gram-Schmidt orthogonalisation, or by both. We will discuss this in more detail in Section~\ref{sec:numerics_dual}. In any case, boundedness is satisfied for a wide range of $n$. Reconstructions for different values of $n$ are shown in Figure~\ref{fig:udn_clean}. As the size of the training set $n$ increases, the reconstructions start developing oscillations, but generally remain stable for a wide range of $n$.

Adding noise to the data $y$ changes the situation. Reconstructions from noisy data $y^\delta$ (cf.~\eqref{eq:udn_delta}) with $1\%$ Gaussian noise are shown in Figure~\ref{fig:udn_noisy}. Reconstructions become unstable much earlier, reflecting the ill-posedness of the problem. Already for $n=5000$ the reconstruction is highly oscillatory. Figure~\ref{fig:norm_rec_udn} (red dashed line) shows that the norm of the reconstruction $\udndelta$~\eqref{eq:udn_delta} grows with $n$ (except for very small $n$ where it oscillates) and around $n = 5000$ it explodes. 

\begin{figure}
	\begin{minipage}{\textwidth}
	\flushleft
	\foreach \i in {1000,2000,5000,7000} {%
		\begin{subfigure}[p]{0.24\textwidth}
			\includegraphics[width=\textwidth]{lsq_clean_\i.png}
			\subcaption{$n = \i$}
		\end{subfigure}
	}
	\caption{Reconstructions using regularisation by projection~\eqref{eq:udn2} from clean data $y \in \range{A}$. Images develop oscillations as the size of the training set $n$ increases, but generally the reconstructions remain stable for a large range of $n$, until $n \sim 7500$ when they explode.}
	\label{fig:udn_clean}
\end{minipage}
\\[5pt]
\begin{minipage}{\textwidth}
	\flushleft
	\foreach \i in {1000,2000,5000} {%
		\begin{subfigure}[p]{0.24\textwidth}
			\includegraphics[width=\textwidth]{lsq_noisy_\i.png}
			\subcaption{$n = \i$}
		\end{subfigure}
	}\hfill
	\caption{Reconstructions using regularisation by projection~\eqref{eq:udn_delta} from noisy data $y^\delta$ ($1 \%$ noise). Reconstructions become unstable for relatively small $n$. The reconstruction for $n=7000$ is completely unstable and not shown here.}
	\label{fig:udn_noisy}
\end{minipage}
\end{figure}

Figure~\ref{fig:lsq_n_delta} shows the relative reconstruction error as a function of the size of the training set $n$, averaged over a validation set of about $100$ images. The reconstruction error from clean data ($\delta=0$) first decreases monotonically as expected, but has an anomaly around $n=5000$. This may again be due to numerical artifacts of the Gram-Schmidt algorithm. For all other noise levels $\delta = 10^{-4};10^{-3};10^{-2};10^{-1}$ we observe the \emph{semiconvergence} behaviour, i.e. the reconstrcution error first decreases with $n$ until a certain optimal value and then explodes.  This behaviour is expected from Section~\ref{sec:direct_noisy}.

\begin{figure}[h]
	\centering
    \begin{minipage}{0.49\textwidth}
	\centering
	\includegraphics[width=\textwidth]{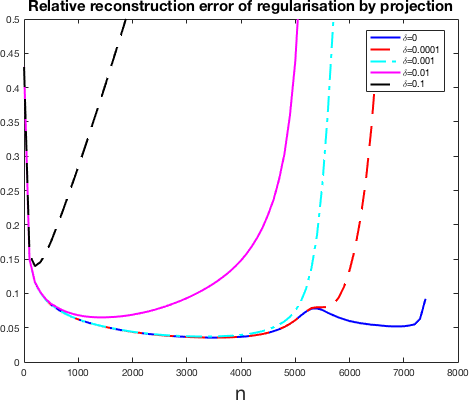}
	\caption{Relative error of regularisation by projection as a function of the size of the training set $n$ for different noise levels $\delta$. The error has been averaged over a validation set of about $100$ images. All curves demonstrate semi-convergence behaviour; the larger the noise, the smaller the optimal $n$ and the larger the error.  	 \\ \\ }\label{fig:lsq_n_delta}
    \end{minipage}\hfill
	\begin{minipage}{0.49\textwidth}
        \centering
		\includegraphics[width=\textwidth]{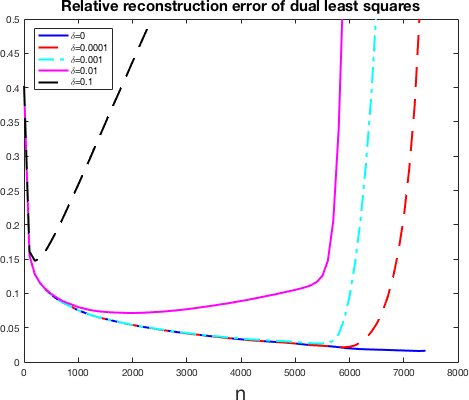}
		\caption{Relative error of dual least squares as a function of the size of the training set $n$ for different noise levels $\delta$. The error has been averaged over a validation set of about $100$ images. All curves except for $\delta=0$ demonstrate semi-convergence behaviour; the larger the noise, the smaller the optimal $n$ and the larger the error. Reconstruction error from clean data ($\delta=0$) decreases monotonically for all $n$.}
		\label{fig:dual_n_delta}
    \end{minipage}
\end{figure}

These experiments demonstrate that for ill-posed problems increasing the size of the training set can have an adverse effect on the reconstructions.

\subsection{Dual least squares}\label{sec:numerics_dual}
The dual least squares approach requires a different type of training data -- training data for the adjoint operator~\eqref{eq:pairs_adj}. It is not clear how this type of training data can be obtained in practice and hence the relevance of the dual least squares for learning is not obvious, but we still perform experiments with this approach for the sake of completeness. To generate $\{v^i\}_{i=1,...,n}$ such that $v^i = A^* y^i$ for $\trdata$ as defined in~\eqref{eq:pairs}, we apply the adjoint of the Radon transform to $\trdata$. Sample training triplets $(u^i,y^i,v^i)$ are shown in Figure~\ref{fig:faces_triplets}.

\begin{figure}[t!]
	\centering
    \begin{minipage}{0.49\textwidth}
    	\centering
		\includegraphics[width=\textwidth]{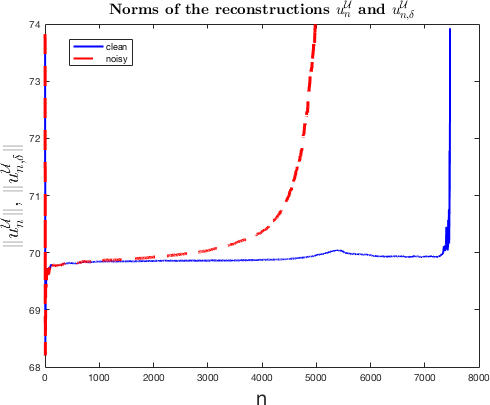}
		\caption{Regularisation by projection: the norm of reconstructions from clean data $y \in \range{A}$ and from noisy data $y^\delta$, denoted by $\udn$~\eqref{eq:udn2} and $\udndelta$~\eqref{eq:udn_delta}, respectively, as a function of $n$. After some oscillations for small $n$, the norm of $\udn$ (clean data) stays almost constant until $n \sim 7500$. 
\bb{The blow-up observed later is caused by the common white oval in all training images. The number of white pixels (with value $1$) in the oval is ca. $700$, and ca. $1500$ more have values between $0.9$ and $1$, which means that the number of linearly independent pixels in the training images is around $7800$.} } 
		\label{fig:norm_rec_udn}
	\end{minipage}\hfill
	\begin{minipage}{0.49\textwidth}
        \centering
		\includegraphics[width=\textwidth]{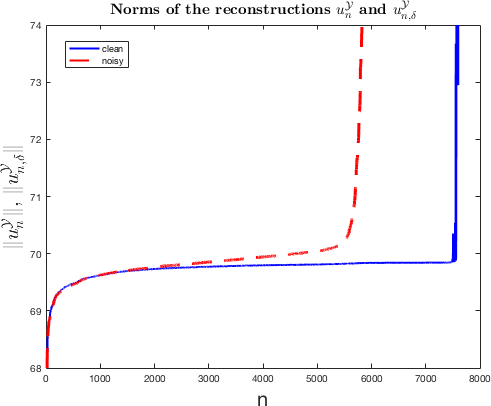}
		\caption{Dual least squares: the norm of reconstructions from clean data $y \in \range{A}$ and from noisy data $y^\delta$, denoted by $\vdn$~\eqref{eq:vdn2} and $\vdndelta$~\eqref{eq:vdn_delta}, respectively, as a function of $n$. The norm of $\vdn$ (clean data) grows monotonically in the beginning and then stays almost constant. At $n \sim 7500$ it explodes, \bb{which is caused by the common white oval in all training images. The number of white pixels (with value $1$) in the oval is ca. $700$, and ca. $1500$ more have values between $0.9$ and $1$, which means that the number of linearly independent pixels in the training images is around $7800$.} }
		\label{fig:norm_rec_vdn}
    \end{minipage}
\end{figure}

We start with reconstructions from clean data $y \in \range{A}$. The norm of the reconstruction~\eqref{eq:vdn2} as a function of the size of training set $n$ is shown in Figure~\ref{fig:norm_rec_vdn} (solid blue line). After a small initial period of monotonic growth, the norm stays constant until $n \sim 7500$ when it explodes. This is the same value as for the solution of regularisation by projection $\udn$ in Figure~\ref{fig:norm_rec_udn}. 

To understand why this happens let us recall that we compute $\vdn$ by projecting the solution of regularisation by projection $\udn$ onto some subspace (cf.~\eqref{eq:vdn2}). Since the dual least squares solution $\vdn$ must be stable as $n \to \infty$ (Theorem~\ref{thm:engl_dual}), the instability must come from an error in computing $\udn$ (cf.~\eqref{eq:udn2}), which is likely to be caused by the numerical instability of the Gram-Schmidt algorithm. This is also an indication that the instability of the solution of regularisation by projection $\udn$ that we see in Figure~\ref{fig:norm_rec_udn} is likely to be due to numerical effects rather than the instability of regularisation by projection.

\begin{figure}
	\begin{minipage}{\textwidth}
	\flushleft
	\foreach \i in {1000,2000,5000,7000} {%
		\begin{subfigure}[p]{0.24\textwidth}
			\includegraphics[width=\textwidth]{dual_clean_\i.png}
			\subcaption{$n = \i$}
		\end{subfigure}
	}
	\caption{Reconstructions using dual least squares~\eqref{eq:vdn2} from clean data $y \in \range{A}$.  Reconstructions remain stable and converge to the ground truth as $n \to \infty$.}
	\label{fig:vdn_clean}
\end{minipage}
\\[5pt]
\begin{minipage}{\textwidth}
	\flushleft
	\foreach \i in {1000,2000,5000,6000} {%
		\begin{subfigure}[p]{0.24\textwidth}
			\includegraphics[width=\textwidth]{dual_noisy_\i.png}
			\subcaption{$n = \i$}
		\end{subfigure}
	}\hfill
	\caption{Reconstructions using dual least squares~\eqref{eq:vdn_delta} from noisy data $y^\delta$ ($1 \%$ noise). We observe a typical semi-convergence behaviour: after improving initially, the reconstructions become unstable as $n \to \infty$ and eventually blow up around $n = 6000$. The reconstruction for $n=7000$ is completely unstable and not shown here.}
	\label{fig:vdn_noisy}
\end{minipage}
\end{figure}

Reconstructions from clean data $y \in \range{A}$ obtained using the dual least squares method~\eqref{eq:vdn2} are shown in Figure~\ref{fig:vdn_clean}. They remain stable as $n$ grows (until numerical instability of the Gram-Schmidt algorithm kicks in at $n \sim 7500$) and converge to the ground truth. They don't develop oscillations that we have seen in Figure~\ref{fig:udn_clean} for regularisation by projection. 

For noisy data $y^\delta$ (Figure~\ref{fig:vdn_noisy}) we observe the expected semi-convergence behaviour: after improving initially, the reconstructions diverge as $n$ increases and blow up somewhere between $n = 5000$ and $n = 6000$. Although both regularisation by projection and dual least squares diverge eventually for noisy data $y^\delta$, the dual least squares method remains stable for a larger $n$, cf. Figures~\ref{fig:norm_rec_udn} vs~\ref{fig:norm_rec_vdn} and Figures~\ref{fig:udn_noisy} vs~\ref{fig:vdn_noisy}.

Figure~\ref{fig:dual_n_delta} shows the relative reconstruction error as a function of the size of the training set $n$, averaged over a validation set of about $100$ images. As expected, the reconstruction error from clean data ($\delta=0$) decreases monotonically for all $n$, while the reconstruction error from noisy data with $\delta = 10^{-4};10^{-3};10^{-2};10^{-1}$ demonstrates the semi-convergence behaviour. The reconstrcution error first decreases with $n$ until a certain optimal value and then explodes.

\subsection{Variational regularisation}\label{sec:numerics_var}
In this section we assess the performance of projected variational regularisation~\eqref{eq:proj_var_noisy}. We only show reconstructions from noisy data $y^\delta$.

For convenience, we repeat the statement of projected variational regularisation~\eqref{eq:proj_var_noisy}
\begin{equation}\label{eq:proj_var_noisy2}
\min_{u \in \U} \frac12 \norm{AP_{\U_n} u - y^\delta}^2 + \alpha \reg(u).
\end{equation}
Our goal is to compare reconstructions for different sizes of the training set $n$ with a model-based reconstruction that has access to the forward operator $A$ (the Radon transform)
\begin{equation}\label{eq:var_model}
\min_{u \in \U} \frac12 \norm{A u - y^\delta}^2 + \alpha \reg(u).
\end{equation}

As in the previous sections, $\U = L^2(\Omega)$, where $\Omega \subset \R^2$ is the image domain. As a prototypical example of a regularisation functional $\reg$ we take Total Variation ($\TV$)~\cite{ROF}, which we define as a functional on $L^2(\Omega)$ extending it with the value $+\infty$ on $L^2(\Omega) \setminus \BV(\Omega)$. This is well defined, since $\Omega \subset \R^2$ and hence $\BV(\Omega) \subset L^2(\Omega)$. This is a common setting in imaging~\cite{Chambolle_Lyons:1997}.

Total Variation is a proper, convex and lower semicontinuous functional on $L^2(\Omega)$~\cite{Acar_Vogel:1994}, hence Assumption~\ref{ass:J} is satisfied. 
	Zeros of the TV functional consist of constant functions. Note that the Radon transform doesn't annihilate the constant functions, hence Assumption \ref{ass:P_n_ker_J} is also satisfied if $P_{\U_n}$ does not annihilate constant functions. For this, it is clearly sufficient that $P_{\U_1} \one \neq 0$, i.e. $(u^1,\one) \neq 0$ (in other words, $u^1$ does not have zero mean). This is clearly satisfied for the photographs in the \ds dataset. Therefore Theorems \ref{thm:J_bounded} and \ref{thm:conv_rates_var} hold.

{To see how well the \emph{learned} operator $AP_{\U_n}$ approximates the Radon transform $A$ when evaluated at the ground truth image $u^\dagger$ (the same as in Figures~\ref{fig:vdn_clean} and~\ref{fig:vdn_noisy}), we show the sinograms $AP_{\U_n}u^\dagger$ (learned) and $Au^\dagger$ (exact) in Figure~\ref{fig:sino_learned_vs_exact}. Already for a moderate size of the training set $n=1000$, the approximation is very good. The relative approximation error as a function of $n$ is shown in Figure~\ref{fig:Radon_learned_err} and seems to decrease exponentially with $n$.}

\begin{figure}[t!]
	\centering
    \begin{minipage}{0.49\textwidth}
    	\centering
		\includegraphics[height=0.92\textwidth]{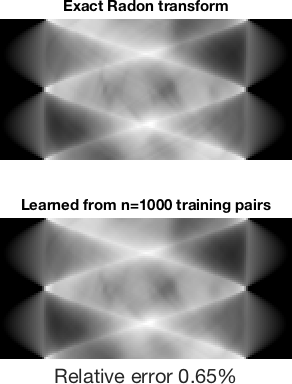}
		\caption{{Top: the sinogram of the image in Figure~\ref{fig:faces_triplets}d. Bottom: learned approximation for $n=1000$ training pairs. Already for a moderate amount of training pairs, the learned operator is able to approximate the Radon transform very well (on an input that is similar to the training inputs).}}
		\label{fig:sino_learned_vs_exact}
	\end{minipage}\hfill
	\begin{minipage}{0.49\textwidth}
        \centering
		\includegraphics[width=\textwidth]{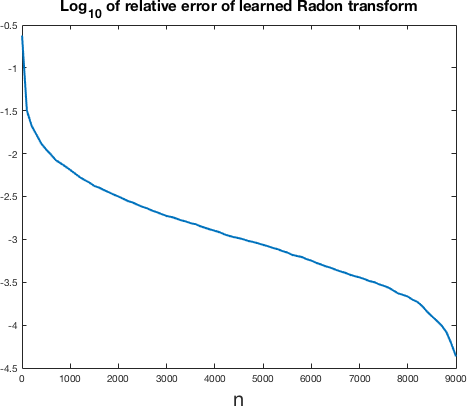}
		\caption{{Reconstruction error of the learned Radon transform as a function of the size of training set $n$ (left: on a linear scale; right: on a $\log$ scale). The error seems to decrease exponentially with $n$.}}
		\label{fig:Radon_learned_err}
    \end{minipage}
\end{figure}

To solve~\eqref{eq:proj_var_noisy2} and~\eqref{eq:var_model}, we use the CVX package~\cite{cvx, cvx2}. To generate the differential operator needed to evaluate $\TV$, we use the DIFFOP package~\cite{diffop}.

Reconstructions from noisy data $y^\delta$ with $1\%$ noise obtained using data driven variational regularisation~\eqref{eq:proj_var_noisy2} are shown in Figures~\ref{fig:var_noisy}\subref{fig:var_1000}--\subref{fig:var_7000} and the solution obtained with the standard model-based approach~\eqref{eq:var_model} is shown in Figure~\ref{fig:var_model}. Already for a moderate size of the training set ($n=1000$, which is $10\%$ of the total number of pixels in each image), we obtain a reasonable reconstruction and as $n$ increases, the reconstructions become closer and closer to the ``ideal'' one obtained using explicit knowledge of the forward model (Figure~\ref{fig:var_model}). The regularisation parameter $\alpha$ was the same in all these experiments.

Data driven reconstructions in Figures~\ref{fig:var_noisy}\subref{fig:var_1000}--\subref{fig:var_7000} exhibit the same qualitative behaviour as the model-based reconstruction in Figure~\ref{fig:var_model}, e.g., we see characteristic for Total Variation \emph{staircasing} that becomes less apparent as $n$ increases.

Relative reconstruction errors for the image shown in Figure~\ref{fig:var_noisy} are given in Table~\ref{tab:var_reg}. For computational reasons, we do not perform experiments on the whole validation set of ca. $100$ images. Although this is not an entirely fair comparison, the reconstruction errors in Table~\ref{tab:var_reg} are smaller than those in Figures~\ref{fig:lsq_n_delta} (regularisation by projection) and~\ref{fig:dual_n_delta} (dual least squares). In the latter two methods the optimal error for $\delta=0.01$ is around $7\%$ (at approximately $n=2000$), whereas for variational regularisation it is smaller even for the same $n$ and keeps decreasing with growing $n$. The difference in the visual quality of the reconstructions is even more apparent, cf. Figures~\ref{fig:udn_noisy},~\ref{fig:vdn_noisy} and~\ref{fig:var_noisy}.

Comparing reconstructions from noisy data $y^\delta$ obtained with data driven variational regularisation (Figures~\ref{fig:var_noisy}\subref{fig:var_1000}--\subref{fig:var_7000}) with those obtained with regularisation by projection (Figure~\ref{fig:udn_noisy}) and dual least squares (Figure~\ref{fig:vdn_noisy}), we observe that variational regularisation produces much better reconstructions. The typical relative error is also smaller, cf. Table~\ref{tab:var_reg} and Figures~\ref{fig:lsq_n_delta} and~\ref{fig:dual_n_delta}. The price to pay for is that variational regularisation~\eqref{eq:proj_var_noisy2} requires solving a (potentially computationally costly) optimisation problem, even if evaluating the projected forward operator $AP_{\U_n}$ becomes cheap once the Gram-Schmidt orthogonalisation is complete, whilst regularisation by projection~\eqref{eq:udn2} and dual least squares~\eqref{eq:vdn2} only require taking a matrix-vector product (also once the Gram-Schmidt algorithm is complete).

\begin{figure}[t!]
	\centering
	\begin{minipage}{\textwidth}
	\foreach \i in {1000,2000,7000} {%
		\begin{subfigure}[p]{0.24\textwidth}
			\includegraphics[width=\textwidth]{var_noisy_\i.png}
			\subcaption{$n = \i$}\label{fig:var_\i}
		\end{subfigure}
	}
	\begin{subfigure}[p]{0.24\textwidth}
			\includegraphics[width=\textwidth]{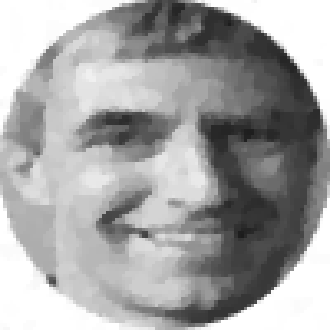}
			\subcaption{Model-based}\label{fig:var_model}
		\end{subfigure}
	\caption{Reconstructions using variational regularisation from noisy data $y^\delta$ ($1\%$ noise): (\subref{fig:var_1000}-\subref{fig:var_7000}) data driven reconstructions~(cf.~\eqref{eq:proj_var_noisy2}) for different sizes of the training set $n$ and (\subref{fig:var_model}) a model based reconstruction  that has access to the forward operator (cf.~\eqref{eq:var_model}). Even for a modest size of the training set $n=1000$ ($10\%$ of the number of pixels in the image) the reconstruction is very reasonable; as $n$ increases, the data driven reconstruction becomes almost indistinguishable from the model-based one.}
	\label{fig:var_noisy}
\end{minipage} \\[10pt]
\begin{minipage}{\textwidth}
\centering
\begin{tabular}{|l|c|c|c|c|c|c|c|c|c|c|}
 \hline
$n$ 		& $1000$		& $2000$		& $3000$		& $4000$		& $5000$		& $6000$		& $7000$		& $8000$		& $9000$ 	& Model \\ \hline
Rel. error & 0.078 & 0.053 & 0.043 & 0.037 & 0.034 & 0.033 & 0.031 & 0.031 & 0.030 & 0.030  \\
\hline
\end{tabular}
\captionof{table}{Relative reconstruction error of data driven variational regularisation for different sizes of the training set $n$ vs. model based reconstruction. Numbers based on the image shown in Figure~\ref{fig:var_noisy}. As $n$ increases, the reconstruction quality approaches that of a model based method. Already for a modest size of the training set $n=3000$ the reconstruction quality is comparable with the optimal one. }\label{tab:var_reg}
\end{minipage}
\end{figure}

\subsection{Practical recommendations}
\rb{Depending on the situation, different methods discussed in this paper will become preferable. If the amount of noise in the measurement $y^\delta$ in~\eqref{eq:Au=y} is small then regularisation by projection (Section~\ref{sec:reg_proj}) is a good option since, once trained, it is very efficient computationally. Care should be taken, however, in checking the validity of our assumption, since this method is non-convergent in general.
}

\rb{The dual least squares method (Section~\ref{sec:dual_lsq}) is better since it does not require additional assumptions (except for an appropriate choice of the size of the training set, cf. Theorem~\ref{thm:engl_dual_noisy}), however, collecting training data for the adjoint operator experimentally is not an obvious task. However, if the goal is to replace a computationally expensive model, then such training data can be collected, and this is the preferred option compared to regularisation by projection.
}

\rb{
Finally, if the amount of noise in the measurement $y^\delta$ is large, parameter choice rules in Theorems~\ref{thm:lsq_par_choice} and~\ref{thm:engl_dual_noisy} will require that the size of the training set is too small and, although stability will hold, the approximation quality will likely be not satisfactory. Variational regularisation is in this case the method of choice, however, it has two drawbacks. Firstly, it is more expensive computationally since it requires solving an optimisation problem, although even here projections might provide a speed-up compared to using an analytic model if this model is computationally expensive. Secondly, a large amount of noise will require a larger regularisation parameter, which will make the effects of the regulariser mode apparent (such as staircasing with Total Variation).
}

\section{Conclusions}
We have seen that some results of model-based regularisation theory can be extended to the purely data driven setting when the forward operator is given only through input-output training pairs. It has also been demonstrated that restrictions of the forward operator and (in the injective case) its inverse to the spans of the training data can be computed without numerical access to the forward operator. This was used to formulate data driven analogues of regularisation by projection and variational regularisation and carry over some classical results such as convergence rates of variational regularisation. We have also seen that the role of the size of the training set is twofold: in variational regularisation, it controls the approximation quality of the forward operator and hence having more training data is always better, while in regularisation by projection, the size of the training set is a regularisation parameter and hence using more training pairs than is allowed by the noise in the measurement will compromise stability. This is due to the ill-posed nature of the inverse problem and different from overfitting, where poor performance is typically a consequence of the training set being too small. The numerical studies should not be considered final. In particular the restrictiveness of Assumption~\ref{ass:l2_coefs_proj} in real world applications needs further studies.

\section*{Acknowledgments}
YK is supported by the Royal Society (Newton International Fellowship NF170045 Quantifying Uncertainty in Model-Based Data Inference Using Partial Order), the Cantab Capital Institute for the Mathematics of Information and the National Physical Laboratory. YK would also like to thank Leon Bungert from the University of Erlangen for stimulating discussions on the topic of this paper. OS is supported by the FWF via the projects I3661-N27 (Novel Error Measures and Source Conditions of Regularization Methods for Inverse Problems) and via SFB F68, project F6807-N36 (Tomography with Uncertainties). 

\printbibliography

@preamble{" \providecommand{\homesiteprefix}{http://iki.fi/tuomov/mathematics} "}

@preamble{" \providecommand{\eprint}[1]{\href{http://arxiv.org/abs/#1}{arXiv:#1}} "}

@article{Burger_Engl:1999,
	Author = {Burger, Martin and Engl, Heinz W.},
	Da = {2000/12/01},
	Date-Added = {2020-08-03 12:18:44 +0100},
	Date-Modified = {2020-08-03 12:18:44 +0100},
	Doi = {10.1023/A:1016641629556},
	Id = {Burger2000},
	Isbn = {1572-9044},
	Journal = {Advances in Computational Mathematics},
	Number = {4},
	Pages = {335--354},
	Title = {Training neural networks with noisy data as an ill-posed problem},
	Ty = {JOUR},
	Url = {https://doi.org/10.1023/A:1016641629556},
	Volume = {13},
	Year = {2000},
	Bdsk-Url-1 = {https://doi.org/10.1023/A:1016641629556}}

@book{
Colton_Kress:1992,
author={Colton, David and Kress, Rainer},
title={Inverse Acoustic and Electromagnetic Scattering Theory},
publisher={Springer},
Address={Berlin},
year={1992}
}

@book{Groetsch:1984,
title={The Theory of Tikhonov Regularization for Fredholm Equations of the First Kind},
author={C.W. Groetsch},
year=1984,
publisher={Pitman}
}

@article{        BoeHofTauYam06,
	author       = {B{\"o}ttcher, A. and Hofmann, B. and Tautenhahn, U. and
	Yamamoto, M.},
	title = {Convergence rates for Tikhonov regularization from different kinds of smoothness conditions},
	journal = {Applicable Analysis},
	volume = {85},
	number = {5},
	pages = {555-578},
	year  = {2006},
	publisher = {Taylor & Francis},
	doi = {10.1080/00036810500474838},
	URL = {https://doi.org/10.1080/00036810500474838},
}

@article{        FleHofMat11,
	author       = {Flemming, J. and Hofmann, B. and Math{\'e}, P.},
	title        = {Sharp converse results for the regularization error using
	distance functions},
	journal = {Inverse Problems},
	doi          = {10.1088/0266-5611/27/2/025006},
	pages = {025006},	
	volume = {27},
	number = {2},
	year = 2011,
	publisher = {{IOP} Publishing},
	url = {https://doi.org/10.1088\%2F0266-5611\%2F27\%2F2\%2F025006},
}

@article{AndElbHooQiuSch15,
	author     = {Andreev, R. and Elbau, P. and de Hoop, M.~V. and Qiu, L. and Scherzer, O.},
	title = {Generalized Convergence Rates Results for Linear Inverse Problems in Hilbert Spaces},
	journal = {Numerical Functional Analysis and Optimization},
	volume = {36},
	number = {5},
	pages = {549-566},
	year  = {2015},
	publisher = {Taylor & Francis},
	doi = {10.1080/01630563.2015.1021422},
	URL = {https://doi.org/10.1080/01630563.2015.1021422},
}

@article{Acar_Vogel:1994,
	doi = {10.1088/0266-5611/10/6/003},
	url = {https://doi.org/10.1088%2F0266-5611%2F10%2F6%2F003},
	year = 1994,
	month = {dec},
	publisher = {{IOP} Publishing},
	volume = {10},
	number = {6},
	pages = {1217--1229},
	author = {R. Acar and C. R. Vogel},
	title = {Analysis of bounded variation penalty methods for ill-posed problems},
	journal = {Inverse Problems},
	abstract = {This paper presents an abstract analysis of bounded variation (BV) methods for ill-posed operator equations Au=z. Let T(u)def=//Au-z//2+ alpha J(u) where the penalty, or 'regularization parameter alpha >0 and the functional J(u) is the BV norm or semi-norm of u, also known as the total variation of u. Under mild restrictions on the operator A and the functional J(u), it is shown that the functional T(u) has a unique minimizer which is stable with respect to certain perturbations in the data z, the operator A, the parameter alpha , and the functional J(u). In addition, convergence results are obtained which apply when these perturbations vanish and the regularization parameter is chosen appropriately.}
}

@article{Chambolle_Lyons:1997,
	Abstract = {We study here a classical image denoising technique introduced by L. Rudin and S. Osher a few years ago, namely the constrained minimization of the total variation (TV) of the image. First, we give results of existence and uniqueness and prove the link between the constrained minimization problem and the minimization of an associated Lagrangian functional. Then we describe a relaxation method for computing the solution, and give a proof of convergence. After this, we explain why the TV-based model is well suited to the recovery of some images and not of others. We eventually propose an alternative approach whose purpose is to handle the minimization of the minimum of several convex functionals. We propose for instance a variant of the original TV minimization problem that handles correctly some situations where TV fails.},
	Author = {Chambolle, Antonin and Lions, Pierre-Louis},
	Da = {1997/04/01},
	Date-Added = {2020-02-16 14:43:22 +0000},
	Date-Modified = {2020-02-16 14:43:22 +0000},
	Doi = {10.1007/s002110050258},
	Id = {Chambolle1997},
	Isbn = {0945-3245},
	Journal = {Numerische Mathematik},
	Number = {2},
	Pages = {167--188},
	Title = {Image recovery via total variation minimization and related problems},
	Ty = {JOUR},
	Url = {https://doi.org/10.1007/s002110050258},
	Volume = {76},
	Year = {1997},
	Bdsk-Url-1 = {https://doi.org/10.1007/s002110050258}}

@article{Haltmaier_deep_nullspace:2019,
	Abstract = {Recently, deep learning based methods appeared as a new paradigm for solving inverse problems. These methods empirically show excellent performance but lack of theoretical justification; in particular, no results on the regularization properties are available. In particular, this is the case for two-step deep learning approaches, where a classical reconstruction method is applied to the data in a first step and a trained deep neural network is applied to improve results in a second step. In this paper, we close the gap between practice and theory for a particular network structure in a two-step approach. For that purpose, we propose using so-called null space networks and introduce the concept of -regularization. Combined with a standard regularization method as reconstruction layer, the proposed deep null space learning approach is shown to be a -regularization method; convergence rates are also derived. The proposed null space network structure naturally preserves data consistency which is considered as key property of neural networks for solving inverse problems.},
	Author = {Johannes Schwab and Stephan Antholzer and Markus Haltmeier},
	Date-Added = {2019-09-25 15:00:45 +0100},
	Date-Modified = {2019-09-25 15:01:02 +0100},
	Doi = {10.1088/1361-6420/aaf14a},
	Journal = {Inverse Problems},
	Month = {jan},
	Number = {2},
	Pages = {025008},
	Publisher = {{IOP} Publishing},
	Title = {Deep null space learning for inverse problems: convergence analysis and rates},
	Url = {https://doi.org/10.1088%2F1361-6420%2Faaf14a},
	Volume = {35},
	Year = 2019,
	Bdsk-Url-1 = {https://doi.org/10.1088%2F1361-6420%2Faaf14a},
	Bdsk-Url-2 = {https://doi.org/10.1088/1361-6420/aaf14a}}

@article{Bubba_Kutyniok_invisible:2019,
	Abstract = {The high complexity of various inverse problems poses a significant challenge to model-based reconstruction schemes, which in such situations often reach their limits. At the same time, we witness an exceptional success of data-based methodologies such as deep learning. However, in the context of inverse problems, deep neural networks mostly act as black box routines, used for instance for a somewhat unspecified removal of artifacts in classical image reconstructions. In this paper, we will focus on the severely ill-posed inverse problem of limited angle computed tomography, in which entire boundary sections are not captured in the measurements. We will develop a hybrid reconstruction framework that fuses model-based sparse regularization with data-driven deep learning. Our method is reliable in the sense that we only learn the part that can provably not be handled by model-based methods, while applying the theoretically controllable sparse regularization technique to the remaining parts. Such a decomposition into visible and invisible segments is achieved by means of the shearlet transform that allows to resolve wavefront sets in the phase space. Furthermore, this split enables us to assign the clear task of inferring unknown shearlet coefficients to the neural network and thereby offering an interpretation of its performance in the context of limited angle computed tomography. Our numerical experiments show that our algorithm significantly surpasses both pure model- and more data-based reconstruction methods.},
	Author = {Tatiana A Bubba and Gitta Kutyniok and Matti Lassas and Maximilian M\"arz and Wojciech Samek and Samuli Siltanen and Vignesh Srinivasan},
	Date-Added = {2019-09-25 14:50:33 +0100},
	Date-Modified = {2019-09-25 15:16:44 +0100},
	Doi = {10.1088/1361-6420/ab10ca},
	Journal = {Inverse Problems},
	Month = {jun},
	Number = {6},
	Pages = {064002},
	Publisher = {{IOP} Publishing},
	Title = {Learning the invisible: a hybrid deep learning-shearlet framework for limited angle computed tomography},
	Url = {https://doi.org/10.1088%2F1361-6420%2Fab10ca},
	Volume = {35},
	Year = 2019,
	Bdsk-Url-1 = {https://doi.org/10.1088%2F1361-6420%2Fab10ca},
	Bdsk-Url-2 = {https://doi.org/10.1088/1361-6420/ab10ca}}

@inproceedings{maass_trivial_IP:2019,
	Author = {Peter Maass},
	Booktitle = {Compressed Sensing and its Applications},
	Date-Added = {2019-09-23 16:20:16 +0100},
	Date-Modified = {2019-09-23 16:20:54 +0100},
	Editor = {Boche, H. and Caire, G. and Calderbank, R. and Kutyniok, G. and Mathar, R. and Petersen, P.},
	Publisher = {Birkh\"auser},
	Series = {Applied and Numerical Harmonic Analysis},
	Title = {Deep learning for trivial inverse problems},
	Year = {2019}}

@article{AspSch19_report,
	Author = {Aspri, Andrea and Banert, Sebastian and \"Oktem, Ozan and Scherzer, Otmar},
	Journal = {To appear in Numerical Functional Analysis and Optimization},
	Title = {A data-driven iteratively regularized Landweber iteration},
	Doi={DOI:10.1080/01630563.2020.1740734},
	Url = {https://arxiv.org/abs/1812.00272},
	Year = {2020},
}

@article{NeuSch90,
	Author = {Neubauer, A. and Scherzer, O.},
	Coden = {NFADOL},
	Doi = {10.1080/01630569008816362},
	Issn = {0163-0563},
	Journal = {Numerical Functional Analysis and Optimization},
	Number = {1-2},
	Pages = {85--99},
	Title = {Finite-dimensional approximation of {T}ikhonov regularized solutions of nonlinear ill-posed problems},
	Url = {http://dx.doi.org/10.1080/01630569008816362},
	Volume = {11},
	Xdata = {jour-NFAO},
	Year = {1990},
	Bdsk-Url-1 = {http://dx.doi.org/10.1080/01630569008816362}}

@book{Sch15,
	Address = {New York},
	Edition = {2},
	Editor = {Scherzer, O.},
	Isbn = {978-1-4939-0789-2},
	Publisher = {Springer},
	Title = {Handbook of Mathematical Methods in Imaging},
	Url = {http://www.springer.com/in/book/9781493907892},
	Year = {2015},
	Bdsk-Url-1 = {http://www.springer.com/in/book/9781493907892}}

@proceedings{Nas76,
	Address = {New York},
	Booktitle = {Proceedings of an Advanced Seminar sponsored by the Mathematics Research Center at the University of Wisconsin, Madison, Wis., October 8-10, 1973},
	Editor = {Nashed, M.Z.},
	Pages = {xiv+1054},
	Publisher = {Academic Press [Harcourt Brace Jovanovich Publishers]},
	Title = {Generalized inverses and applications},
	Year = {1976}}

@book{conway:1985,
	Author = {Conway, John B.},
	Date-Added = {2019-09-10 10:19:35 +0100},
	Date-Modified = {2019-09-10 10:20:03 +0100},
	Publisher = {Springer},
	Title = {A Course in Functional Analysis},
	Year = {1985}}

@misc{diffop,
	Author = {Jan Lellmann},
	Date-Added = {2019-07-31 11:58:33 +0100},
	Date-Modified = {2019-07-31 11:58:33 +0100},
	Howpublished = {\url{https://www.lellmann.net/work/software/start}},
	Title = {DIFFOP - Differential operators in MATLAB without the pain},
	Year = 2014}

@article{Unser_denoising:2017,
	Author = {K. H. {Jin} and M. T. {McCann} and E. {Froustey} and M. {Unser}},
	Date-Added = {2019-07-30 14:52:09 +0100},
	Date-Modified = {2019-07-30 14:52:26 +0100},
	Doi = {10.1109/TIP.2017.2713099},
	Issn = {1057-7149},
	Journal = {IEEE Transactions on Image Processing},
	Keywords = {computerised tomography;feedforward neural nets;image resolution;iterative methods;learning (artificial intelligence);medical image processing;deep convolutional neural network;CNN;ill-posed inverse problems;regularized iterative algorithms;forward operators;adjoint operators;hyperparameter selection;forward model;direct inversion;normal-convolutional inverse problems;multiresolution decomposition;residual learning;image structure;parallel beam X-ray computed tomography;synthetic phantoms;total variation-regularized iterative reconstruction;GPU;Image reconstruction;Convolution;Inverse problems;Computed tomography;Neural networks;Iterative methods;Image restoration;image reconstruction;tomography;computed tomography;magnetic resonance imaging;biomedical signal processing;biomedical imaging;reconstruction algorithms},
	Month = {Sep.},
	Number = {9},
	Pages = {4509-4522},
	Title = {Deep Convolutional Neural Network for Inverse Problems in Imaging},
	Volume = {26},
	Year = {2017},
	Bdsk-Url-1 = {https://doi.org/10.1109/TIP.2017.2713099}}

@article{Poeschl:2010,
	Abstract = {Consider a nonlinear ill-posed operator equation F(u) = y, where F is defined on a Banach space X. In this paper we analyze finite-dimensional variational regularization, which takes into account operator approximations and noisy data. As shown in the literature, depending on the setting, convergence of the regularized solutions of the finite-dimensional problems can be with respect to the strong or just a weak topology. In this paper our contribution is twofold. First, we derive convergence rates in terms of Bregman distances in the convex regularization setting under appropriate sourcewise representation of a solution of the equation. Secondly, for particular regularization realizations in nonseparable Banach spaces, we discuss the finite-dimensional approximations of the spaces and the type of convergence, which is needed for the convergence analysis. These considerations lay the fundament for efficient numerical implementation. In particular, we emphasize on the space X of finite total variation functions and analyze in detail the cases when X is the space of the functions of finite bounded deformation and the L∞-space. The latter two settings are of interest in numerous problems arising in optimal control, machine learning and engineering.},
	Author = {Christiane P\"oschl and Elena Resmerita and Otmar Scherzer},
	Date-Added = {2019-07-30 14:31:25 +0100},
	Date-Modified = {2019-07-30 14:31:55 +0100},
	Doi = {10.1088/0266-5611/26/10/105017},
	Journal = {Inverse Problems},
	Month = {sep},
	Number = {10},
	Pages = {105017},
	Publisher = {{IOP} Publishing},
	Title = {Discretization of variational regularization in Banach spaces},
	Url = {https://doi.org/10.1088%2F0266-5611%2F26%2F10%2F105017},
	Volume = {26},
	Year = 2010,
	Bdsk-Url-1 = {https://doi.org/10.1088%2F0266-5611%2F26%2F10%2F105017},
	Bdsk-Url-2 = {https://doi.org/10.1088/0266-5611/26/10/105017}}

@article{Symes_2009,
	Abstract = {The seismic reflection method seeks to extract maps of the Earth's sedimentary crust from transient near-surface recording of echoes, stimulated by explosions or other controlled sound sources positioned near the surface. Reasonably accurate models of seismic energy propagation take the form of hyperbolic systems of partial differential equations, in which the coefficients represent the spatial distribution of various mechanical characteristics of rock (density, stiffness, etc). Thus the fundamental problem of reflection seismology is an inverse problem in partial differential equations: to find the coefficients (or at least some of their properties) of a linear hyperbolic system, given the values of a family of solutions in some part of their domains. The exploration geophysics community has developed various methods for estimating the Earth's structure from seismic data and is also well aware of the inverse point of view. This article reviews mathematical developments in this subject over the last 25 years, to show how the mathematics has both illuminated innovations of practitioners and led to new directions in practice. Two themes naturally emerge: the importance of single scattering dominance and compensation for spectral incompleteness by spatial redundancy.},
	Author = {W W Symes},
	Date-Added = {2019-07-22 13:41:45 +0100},
	Date-Modified = {2019-07-22 13:41:45 +0100},
	Doi = {10.1088/0266-5611/25/12/123008},
	Journal = {Inverse Problems},
	Month = {dec},
	Number = {12},
	Pages = {123008},
	Publisher = {{IOP} Publishing},
	Title = {The seismic reflection inverse problem},
	Url = {https://doi.org/10.1088%2F0266-5611%2F25%2F12%2F123008},
	Volume = {25},
	Year = 2009,
	Bdsk-Url-1 = {https://doi.org/10.1088%2F0266-5611%2F25%2F12%2F123008},
	Bdsk-Url-2 = {https://doi.org/10.1088/0266-5611/25/12/123008}}

@article{Ellerbroek_2009,
	Abstract = {Adaptive optics (AO) is a technology used in ground-based astronomy to correct for the wavefront aberrations and loss of image quality caused by atmospheric turbulence. Provided some difficult technical problems can be overcome, AO will enable future astronomers to achieve nearly diffraction-limited performance with the extremely large telescopes that are currently under development, thereby greatly improving spatial resolution, spectral resolution and observing efficiency which will be achieved. The goal of this topical review is to present to the inverse problems community a representative sample of these problems. In this review, we first present a tutorial overview of the mathematical models and techniques used in current AO systems. We then examine in detail the following topics: laser guidestar adaptive optics, multi-conjugate and multi-object adaptive optics, high-contrast imaging and deformable mirror modeling and parameter identification.},
	Author = {B L Ellerbroek and C R Vogel},
	Date-Added = {2019-07-22 13:34:22 +0100},
	Date-Modified = {2019-07-22 13:34:22 +0100},
	Doi = {10.1088/0266-5611/25/6/063001},
	Journal = {Inverse Problems},
	Month = {apr},
	Number = {6},
	Pages = {063001},
	Publisher = {{IOP} Publishing},
	Title = {Inverse problems in astronomical adaptive optics},
	Url = {https://doi.org/10.1088%2F0266-5611%2F25%2F6%2F063001},
	Volume = {25},
	Year = 2009,
	Bdsk-Url-1 = {https://doi.org/10.1088%2F0266-5611%2F25%2F6%2F063001},
	Bdsk-Url-2 = {https://doi.org/10.1088/0266-5611/25/6/063001}}

@book{Natterer_Wubbelling,
	Author = {Natterer, F. and W\"ubbeling, F.},
	Date-Added = {2019-07-22 13:24:40 +0100},
	Date-Modified = {2019-07-22 13:24:56 +0100},
	Doi = {10.1137/1.9780898718324},
	Eprint = {https://epubs.siam.org/doi/pdf/10.1137/1.9780898718324},
	Publisher = {Society for Industrial and Applied Mathematics},
	Title = {Mathematical Methods in Image Reconstruction},
	Url = {https://epubs.siam.org/doi/abs/10.1137/1.9780898718324},
	Year = {2001},
	Bdsk-Url-1 = {https://epubs.siam.org/doi/abs/10.1137/1.9780898718324},
	Bdsk-Url-2 = {https://doi.org/10.1137/1.9780898718324}}

@inproceedings{Pock_variational:2017,
	Author = {Erich Kobler and Teresa Klatzer and Kerstin Hammernik and Thomas Pock},
	Booktitle = {German Conference on Pattern Recognition},
	Date-Added = {2019-07-17 17:02:43 +0100},
	Date-Modified = {2019-07-17 17:03:18 +0100},
	Title = {Variational Networks: Connecting Variational Methods and Deep Learning},
	Year = {2017}}

@article{Adler_2017,
	Author = {Jonas Adler and Ozan \"Oktem},
	Date-Added = {2019-07-17 17:01:55 +0100},
	Date-Modified = {2019-07-17 17:10:09 +0100},
	Doi = {10.1088/1361-6420/aa9581},
	Journal = {Inverse Problems},
	Month = {nov},
	Number = {12},
	Pages = {124007},
	Publisher = {{IOP} Publishing},
	Rating = {0},
	Title = {Solving ill-posed inverse problems using iterative deep neural networks},
	Url = {https://doi.org/10.1088%2F1361-6420%2Faa9581},
	Volume = {33},
	Year = 2017,
	Bdsk-Url-1 = {https://doi.org/10.1088%2F1361-6420%2Faa9581},
	Bdsk-Url-2 = {https://doi.org/10.1088/1361-6420/aa9581}}

@inproceedings{Lunz_CBS_adversarial,
	Author = {Lunz, Sebastian and \"{O}ktem, Ozan and Sch\"{o}nlieb, Carola-Bibiane},
	Booktitle = {Proceedings of the 32nd International Conference on Neural Information Processing Systems},
	Date-Added = {2019-07-17 17:00:08 +0100},
	Date-Modified = {2019-07-18 09:41:50 +0100},
	Pages = {8516--8525},
	Title = {Adversarial Regularizers in Inverse Problems},
	Url = {http://dl.acm.org/citation.cfm?id=3327757.3327942},
	Year = {2018},
	Bdsk-Url-1 = {http://dl.acm.org/citation.cfm?id=3327757.3327942}}

@article{Haltmeier_NETT,
	Author = {Housen Li and Johannes Schwab and Stephan Antholzer and Markus Haltmeier},
	title={NETT: Solving Inverse Problems with Deep Neural Networks},
	journal={Inverse Problems},
	url={http://iopscience.iop.org/10.1088/1361-6420/ab6d57},
	year={2020},
}

@conference{rudi_camoriano_rosasco:2015,
	Abstract = {<p>We study Nystr"om type subsampling approaches to large scale kernel methods, and prove learning bounds in the statistical learning setting, where random sampling and high probability estimates are considered. In particular, we prove that these approaches can achieve optimal learning bounds, provided the subsampling level is suitably chosen. These results suggest a simple incremental variant of Nystr"om Kernel Regularized Least Squares, where the subsampling level implements a form of computational regularization, in the sense that it controls at the same time regularization and computations. Extensive experimental analysis shows that the considered approach achieves state of the art performances on benchmark large scale datasets.</p>
},
	Author = {Alessandro Rudi and Raffaello Camoriano and Lorenzo Rosasco},
	Booktitle = {Proceedings of the 29th International Conference on Neural Information Processing Systems},
	Date-Added = {2019-07-17 15:57:44 +0100},
	Date-Modified = {2019-07-18 09:39:30 +0100},
	Title = {Less is More: Nystr{\"o}m Computational Regularization},
	Url = {https://papers.nips.cc/paper/5936-less-is-more-nystrom-computational-regularization},
	Year = {2015},
	Bdsk-Url-1 = {https://papers.nips.cc/paper/5936-less-is-more-nystrom-computational-regularization}}

@article{kriukova_pereverzyev_tkachenko:2017,
	Abstract = {In the statistical learning theory the Nyström type subsampling methods are considered as tools for dealing with big data. In this paper we consider Nyström subsampling as a special form of the projected Lavrentiev regularization, and study it using the approaches developed in the regularization theory. As a result, we prove that the same capacity independent learning rates that are guaranteed for standard algorithms running with quadratic computational complexity can be obtained with subquadratic complexity by the Nyström subsampling approach, provided that the subsampling size is chosen properly. We propose a priori rule for choosing the subsampling size and a posteriori strategy for dealing with uncertainty in the choice of it. The theoretical results are illustrated by numerical experiments.},
	Author = {Galyna Kriukova and Sergiy Pereverzyev and Pavlo Tkachenko},
	Date-Added = {2019-07-17 15:55:48 +0100},
	Date-Modified = {2019-07-17 15:56:08 +0100},
	Doi = {10.1088/1361-6420/33/7/074001},
	Journal = {Inverse Problems},
	Month = {jun},
	Number = {7},
	Pages = {074001},
	Publisher = {{IOP} Publishing},
	Title = {Nystr\"om type subsampling analyzed as a regularized projection},
	Url = {https://doi.org/10.1088%2F1361-6420%2F33%2F7%2F074001},
	Volume = {33},
	Year = 2017,
	Bdsk-Url-1 = {https://doi.org/10.1088%2F1361-6420%2F33%2F7%2F074001},
	Bdsk-Url-2 = {https://doi.org/10.1088/1361-6420/33/7/074001}}

@book{trefethen_num_lin_al,
	Author = {Lloyd N. Trefethen and David {Bau, III}},
	Date-Added = {2019-07-17 08:55:37 +0100},
	Date-Modified = {2019-07-22 15:36:41 +0100},
	Publisher = {Society for Industrial and Applied Mathematics},
	Title = {Numerical Linear Algebra},
	Year = {1997}}

@unpublished{hansen_ai_at_a_cost,
	Author = {Vegard Antun and Francesco Renna and Clarice Poon and Ben Adcock and Anders C. Hansen},
	Date-Added = {2019-07-16 09:29:11 +0100},
	Date-Modified = {2019-07-16 09:34:49 +0100},
	Note = {arXiv:1902.05300},
	Title = {On instabilities of deep learning in image reconstruction -- Does AI come at a cost?},
	Year = {2019}}

@article{arridge_et_al_acta_numerica,
	Author = {Arridge, Simon R. and Maass, Peter and \"Oktem, Ozan and Sch\"onlieb, Carola-Bibiane},
	Date-Modified = {2019-07-17 10:53:40 +0100},
	Doi = {10.1017/S0962492919000059},
	Journal = {Acta Numerica},
	Pages = {1-174},
	Publisher = {Cambridge University Press},
	Title = {Solving inverse problems using data-driven models},
	Volume = {28},
	Year = {2019},
	Bdsk-Url-1 = {https://doi.org/10.1017/S0962492919000059}}

@book{isakov_IP_PDEs,
	Author = {Isakov, Victor},
	Date-Added = {2019-07-16 08:52:31 +0100},
	Date-Modified = {2019-07-16 08:53:05 +0100},
	Publisher = {Springer},
	Title = {Inverse Problems for Partial Differential Equations},
	Year = {2017}}

@book{scherzer_var_meth:2009,
	Author = {Scherzer, O. and Grasmair, M. and Grossauer, H. and Haltmeier, M. and Lenzen, F.},
	Date-Added = {2019-06-20 14:45:08 +0100},
	Date-Modified = {2019-06-20 14:46:59 +0100},
	Publisher = {Springer},
	Title = {Variational Methods in Imaging},
	Year = {2009}}

@article{Seidman:1980,
	Author = {T. I. Seidman},
	Date-Added = {2019-04-26 12:44:59 +0100},
	Date-Modified = {2019-09-10 10:21:47 +0100},
	Journal = {Journal of Optimization Theory and Applications},
	Number = {4},
	Pages = {535--547},
	Title = {Nonconvergence Results for the Application of Least-Squares Estimation to Ill-Posed Problems},
	Volume = {30},
	Year = {1980}}

@article{Benning_Burger_modern:2018,
	Author = {Martin Benning and Martin Burger},
	Date-Added = {2018-04-05 13:31:27 +0000},
	Date-Modified = {2019-06-20 17:01:01 +0100},
	Journal = {Acta Numerica},
	Pages = {1-111},
	Title = {Modern Regularization Methods for Inverse Problems},
	Volume = {27},
	Year = {2018}}

@incollection{cvx2,
	Author = {Michael Grant and Stephen Boyd},
	Booktitle = {Recent Advances in Learning and Control},
	Date-Added = {2017-07-30 19:27:10 +0000},
	Date-Modified = {2019-07-30 18:12:10 +0100},
	Editor = {V. Blondel and S. Boyd and H. Kimura},
	Pages = {95--110},
	Publisher = {Springer},
	Series = {Lecture Notes in Control and Information Sciences},
	Title = {Graph implementations for nonsmooth convex programs},
	Year = 2008}

@article{Burger_Osher:2004,
	Abstract = {The aim of this paper is to provide quantitative estimates for the minimizers of non-quadratic regularization problems in terms of the regularization parameter, respectively the noise level. As usual for ill-posed inverse problems, these estimates can be obtained only under additional smoothness assumptions on the data, the so-called source conditions, which we identify with the existence of Lagrange multipliers for a limit problem. Under such a source condition, we shall prove a quantitative estimate for the Bregman distance induced by the regularization functional, which turns out to be the natural distance measure to use in this case. We put a special emphasis on the case of total variation regularization, which is probably the most important and prominent example in this class. We discuss the source condition for this case in detail and verify that it still allows discontinuities in the solution, while imposing some regularity on its level sets.},
	Author = {Martin Burger and Stanley Osher},
	Date-Added = {2016-05-27 18:34:03 -0600},
	Date-Modified = {2016-05-27 18:34:17 -0600},
	Journal = {Inverse Problems},
	Number = {5},
	Pages = {1411},
	Title = {Convergence rates of convex variational regularization},
	Url = {http://stacks.iop.org/0266-5611/20/i=5/a=005},
	Volume = {20},
	Year = {2004},
	Bdsk-Url-1 = {http://stacks.iop.org/0266-5611/20/i=5/a=005}}

@book{engl:1996,
	Author = {Engl, {H. W.} and Hanke, M. and Neubauer, A.},
	Date-Added = {2015-04-25 15:06:45 +0100},
	Date-Modified = {2015-04-25 15:08:22 +0100},
	Publisher = {Springer},
	Title = {Regularization of Inverse Problems},
	Year = {1996},
	Bdsk-Url-1 = {http://books.google.co.uk/books?id=2bzgmMv5EVcC}}

@article{ROF,
	Author = {Leonid I. Rudin and Stanley Osher and Emad Fatemi},
	Date-Modified = {2018-03-16 18:47:59 +0000},
	Doi = {10.1016/0167-2789(92)90242-F},
	Issn = {0167-2789},
	Journal = {Physica D: Nonlinear Phenomena},
	Number = {1},
	Pages = {259 - 268},
	Title = {Nonlinear total variation based noise removal algorithms},
	Url = {http://www.sciencedirect.com/science/article/pii/016727899290242F},
	Volume = {60},
	Year = {1992},
	Bdsk-Url-1 = {http://www.sciencedirect.com/science/article/pii/016727899290242F},
	Bdsk-Url-2 = {http://dx.doi.org/10.1016/0167-2789(92)90242-F}}

@misc{cvx,
	Author = {Michael Grant and Stephen Boyd},
	Howpublished = {\url{http://cvxr.com/cvx}},
	Month = mar,
	Title = {{CVX}: Matlab Software for Disciplined Convex Programming, version 2.1},
	Year = 2014}

@incollection{Micchelli_Rivlin:1977,
	Author = {C.A. Micchelli and T.J. Rivlin},
	Booktitle = {Optimal Estimation in Approximation Theory},
	Series = {The IBM Research Symposia Series},
	Publisher = {Springer},
	Title = {A Survey of Optimal Recovery},
	Year = {1977}}

@article{Owhadi:2019,
	Author = {Houman Owhadi and Clint Scovel and and Florian Sch\"afer},
	Doi = {10.1090/noti1963},
	Journal = {Notices of the Americal Mathematical Society},
	Number = {10},
	Pages = {1608-1617},
	Title = {Statistical Numerical Approximation},
	Volume = {66},
	Year = {2019}}

@article{ AubVes97,
  author       = {Aubert, G. and Vese, L.},
  title        = {A variational method in image recovery},
  journal      = {SIAM J. Numer. Anal.} ,
  number       = {5},
  pages        = {1948--1979},
  volume       = {34},
  year         = {1997},
}

@article{faces,
	Author = {Bainbridge, W. A. and Isola, P. and Oliva, A.},
	Date-Added = {2019-07-16 10:51:43 +0100},
	Date-Modified = {2019-07-16 10:52:47 +0100},
	Journal = {Journal of Experimental Psychology: General},
	Number = {4},
	Pages = {1323-1334},
	Title = {The Intrinsic Memorability of Face Photographs},
	Volume = {142},
	Year = {2013}}

\end{document}